\makeatletter \@addtoreset{equation}{section} \makeatother
\renewcommand\thetable{\thesection.\@arabic\c@table}
\theoremstyle{plain}
\newtheorem{maintheorem}{Theorem}
\newtheorem{theorem}{Theorem }[section]
\newtheorem{proposition}[theorem]{Proposition}
\newtheorem{lemma}[theorem]{Lemma}
\newtheorem{corollary}[theorem]{Corollary}
\theoremstyle{definition} \theoremstyle{remark}
\newtheorem{remark}[theorem]{Remark}
\newtheorem{definition}[theorem]{Definition}
\newcommand{\supp}{\operatorname{supp}}
\newcommand{\al} {\alpha}
\newcommand{\de} {\delta}       
\newcommand{\vep}{\varepsilon}
\renewcommand{\epsilon}{\varepsilon}
\newcommand{\la} {\lambda}      \newcommand{\La}{\Lambda}
       \newcommand{\Si}{\Sigma}
\newcommand{\cH}{{\mathcal H}}
\newcommand{\cO}{\mathcal{O}}
\newcommand{\cN}{\mathcal{N}}
\newcommand{\cF}{\mathcal{F}}
\newcommand{\Leb}{\mbox{Leb}}
\begin{document}

\title{Positive Lyapunov exponents for Hamiltonian linear differential systems}
\author{M\'ario Bessa and Paulo Varandas}

\address{M\'ario Bessa, Departamento de Matem\'atica, Universidade da Beira Interior, Rua Marqu\^es d'\'Avila e Bolama,
  6201-001 Covilh\~a,
Portugal.}
\email{bessa@ubi.pt}

\address{Paulo Varandas, Departamento de Matem\'atica, Universidade Federal da Bahia\\
Av. Ademar de Barros s/n, 40170-110 Salvador, Brazil.}
\email{paulo.varandas@ufba.br}

\thanks{}

\date{\today}

\maketitle

\begin{abstract}
In the present paper we give a positive answer to some questions posed in \cite{Viana} on the existence of positive
Lyapunov exponents for Hamiltonian linear differential systems. We prove that there exists an open and dense set of Hamiltonian linear differential systems, over a suspension flow with bounded roof function, displaying at least one positive Lyapunov exponent. In consequence, typical cocycles over a uniformly hyperbolic flow are chaotic. Finally, we obtain similar results for cocycles over flows preserving an ergodic, hyperbolic measure with local product structure.
\end{abstract}

\section{Introduction}
\subsection{The setting}
Let $M$ denote a $d$-dimensional compact Hausdorff space $M$, $X^{t}\colon M\rightarrow M$ a continuous flow and $H\colon M\rightarrow {\mathfrak{sp}(2\ell,\mathbb{R})}$ a continuous, sometimes  smooth map,
where $\mathfrak{sp}(2\ell,\mathbb{R})$ denotes the Hamiltonian Lie algebra of
$2\ell\times 2\ell$ traceless matrices $H$ and with entries over the reals such that
$JH+H^TJ=0$, where
\begin{equation}\label{skew}
J=\begin{pmatrix}0 & -\textbf{1}_{\ell}\\\textbf{1}_{\ell} &0\end{pmatrix}
\end{equation}
denotes the skew-symmetric matrix, $\textbf{1}_{\ell}$ is the $\ell$-dimensional identity matrix and $H^T$ stands for the transpose matrix of $H$.
Given any $x\in M$, the solution $u(t)=\Phi_{H}^{t}(x)$ of the
non-autonomous linear differential equation $\partial_t
u(t)=H(X^{t}(\cdot))\cdot u(t)$, with initial condition
$\Phi_{H}^{0}(x)=\textbf{1}_{2\ell}$, is a
linear flow which evolves in the symplectic linear group
$sp(2\ell,\mathbb{R})$. The transversal linear Poincar\'{e} flow (see~\cite[\S2.3]{BD}) of a Hamiltonian flow defined in a $(2\ell+2)$-dimensional
manifold and such that $\|\partial_t X^t(x)|_{t=0}\|=\|X(x)\|\not=0$ for
all regular points $x$, like e.g. the Hamiltonian geodesic flow, is the common example of a
non-autonomous Hamiltonian linear differential system.

The framework of Hamiltonian linear differential systems is a good start if one
aims to understand the behavior of the dynamical linear differential system (see \S\ref{IM} for details) associated
to a given Hamiltonian flow. However, the linear differential systems keeps the independent relation between
the base and fiber dynamics which is a natural counterweight to its
great generality. In other words we are able to perturb the fiber
keeping unchanged the base dynamical system (or vice-versa) but on the other
hand we allow a vast number of symplectic actions in the fiber. We should keep
in mind that any perturbation in the action of the fiber of a
dynamical linear differential system should begin with a perturbation in the dynamical system
itself which, in general, cause extra difficulties.

\subsection{Lyapunov exponents}

Given a linear differential system $H$ over a flow $X^t$ the
Lyapunov exponents detect if there are any exponential asymptotic
behavior on the evolution of the time-continuous cocycle $\Phi_H^t$
along orbits (cf. \cite{BP}). If the flow is over a fixed point then $H(t)=H$ is
constant, hence the Lyapunov exponents are exactly the logarithm of the real parts of
the eigenvalues of $H$. In general, the eigenvalues of the matrix
$H(t)$ are meaningless if one aims to study the asymptotic
solutions. Under certain measure preserving assumptions on $X^t$ and
integrability of $\log\|\Phi_H^1\|$ the existence of Lyapunov exponents for almost
every point is guaranteed by Oseledets' theorem (\cite{O}).
Non-zero Laypunov exponents assure, in average, exponential
rate of divergence or convergence of two neighboring trajectories,
whereas zero exponents give us the lack of any kind of average
exponential behavior. A flow is said to be \emph{nonuniformly
hyperbolic} if its Lyapunov exponents associated to the dynamical linear differential system given by the transversal linear Poincar\'e flow, are all different from zero. 
The corresponding definitions for the discrete-time case are completely analogous. 
A central question in dynamical systems is to determine whether we
have non-zero Lyapunov exponents for the original dynamics and some
or the majority of nearby systems, an answer that usually depends on
the smoothness and richness of the dynamical system, among other
aspects.

\subsection{State of the art}

Concerning with continuous flows over compact Hausdorff spaces, and
motivated by the works by Bochi and Viana~\cite{B,BV2} for discrete
dynamical systems, the first author proved in~\cite{Be1,Be2}
that there exists a residual $\mathcal{R}$, i.e. a $C^0$-dense $G_\delta$,
such that any conservative linear differential system in
$\mathcal{R}$ yields the dichotomy: either the Oseledets
decomposition along the orbit of almost every point has a weak form
of hyperbolicity called \emph{dominated splitting} or else the spectrum is
trivial mean that all the Lyapunov exponents vanish. The main idea
behind the proof of these results, also used by Novikov~\cite{No}
and by Ma\~n\'e~\cite{M1},  is to use the absence of dominated
splitting to cause a decay of the Lyapunov exponents by perturbing
the system rotating Oseledets' subspaces thus mixing different expansion rates.

Other approaches were given in ~\cite{BeV} where it was
proved abundance of trivial spectrum but with respect to $L^p$
topologies for a large class of linear differential systems.

In this work we are interested in proving abundance of non-zero
Lyapunov exponents. In fact, a major breakthrough in the 
analysis of the Lyapunov exponents of H\"older continuous cocycles 
over nonuniformly hyperbolic base map was obtained in an outstanding paper by Viana~\cite{Viana}
and our purpose here is to contribute to the better understanding of the ergodic theory
of Hamiltonian linear differential systems and to answer some of the questions raised in that article, namely part of Problem 6 in \cite[pp. 678]{Viana}.
More precisely, we generalize to the setting
of Hamiltonian linear differential systems the results
obtained by Viana~\cite{Viana} for conservative cocycles and extended to symplectic cocycles in ~\cite{BeVar}.
Our approach is quite different from ~\cite{Viana,BeVar} mainly because the extension of many results 
concerning discrete dynamical systems
to the time-continuous setting is usually far from being immediate
and there is no direct approach to translate results from both
settings as we will now discuss.
First, it is proven that fiber-bunched cocycles admit center dynamics called holonomies.
Then, using a generalization of Ledrappier's criterium (\cite{L}), Viana proved that zero Lyapunov exponents 
correspond to a highly non-generic condition on the system $A$: conditional measures associated to invariant measures
for the cocycle are preserved under holonomies. Finally, for $sl(d,\mathbb R)$-cocycles the map 
$A\mapsto H_{A,x,y}$ is a submersion and this leads to show that the set of cocycles 
$A\in C^{r,\nu}(M, sl(d,\mathbb R))$ satisfying the later is a closed subset of empty interior.
The case of $sp(2\ell,\mathbb{R})$-symplectic cocycles have more subtleties as pointed out by Viana
~\cite[page 678]{Viana}, since the fundamental and elegant lemma asserting that
the holonomy maps are submersions as function of $A\in C^{r,\nu}(M,sp(2\ell,\mathbb K))$ fails
to be true because  the symplectic group $sp(2\ell,\mathbb{R})$ has dimension $\ell(2\ell+1)$ which is smaller 
than the necessary dimension $2\ell(2\ell-1)$, $(\ell\geq 2)$. This lead to the question of understanding which groups 
can be taken to obtain non-trivial spectrum.
In \cite{BeVar} we used a symplectic perturbative approach in small neighborhoods of heteroclinic points 
to show that every cocycle $A$ is $C^{r+\nu}$-approximated by open sets of symplectic cocycles so that unstable 
holonomies remain unchanged while stable holonomies are modified in order \emph{not} to satisfy a
rigid condition.

Here we deal with the time-continuous setting. First we address  the case of suspension flows
(as a model to flows that admit a global cross-section) and then deal with general non-uniformly hyperbolic flows.
The strategy used to prove the result for Hamiltonian linear differential systems over 
suspension flows is to make a reduction to the discrete-time case by considering an 
induced cocycle in the fiber that also depends on the roof function. 

The time-continuous results \emph{do not} follow immediately from the discrete-time ones.
Actually, in spite of perturbing the discrete-time cocycles $B \in C^{r,\nu}(M, sp(2\ell,\mathbb K))$ in 
order to make the cocycle in some sense typical situation, our perturbations (see \S\ref{PH}) are on the space of its 
\emph{infinitesimal generators} or, more accurately, on the Hamiltonian linear differential system $H \in C^{r,\nu}(M, \mathfrak{sp}(2\ell,\mathbb R))$ generating a fundamental 
solution $\Phi_H^t$ over the suspension flow $(X^t)_t$. One of the main difficulties is really to
analyze the variation of the holonomies for the reduced cocycle
$\Psi_H$ (see Section \S\ref{s.Lyap}) as a function of the infinitesimal generators $H$.
On the other hand, one could hope that it is possible to obtain a
proof of the discrete corresponding theorems for flows by reducing
to the time-one diffeomorphism $f=X^{1}$. This is known as the
\emph{embedding problem}: given a diffeomorphism can it be embedded
as the time-one map of a flow? However, it is well-known that $C^1$
diffeomorphisms that embed in $C^1$ flows form a nowhere dense set
(see~\cite{Pal}).
The general case of flows, not necessarily suspensions, is more subtle. 

As usually, the study of the hyperbolic map projected in the normal bundle and also the analysis of induced return maps is of great help. 

Our paper is organized as follows. In Section~\ref{s.statements} we present some useful definitions and state our main results.
We collect some preliminary results on hyperbolic and suspension flows and Lyapunov exponents of induced cocycles in Section~\ref{s.preliminaries}, while
the proofs of the main results are given in Sections~\ref{sec:time.continuous.suspension} and \ref{sec:time.continuous.general}. In Section~\ref{sec:time.continuous.suspension} we treat the Hamiltonian linear differential systems in the particular case when the base dynamics evolve in a suspension flow and, finally, in Section \ref{sec:time.continuous.general} we obtain the full statement for Hamiltonian linear differential systems over general flows.

\section{Some definitions and statement of the main results}\label{s.statements}

\subsection{Hamiltonian linear differential systems}\label{discrete-time}

Let $\omega$ be a symplectic form, i.e., a closed and nondegenerate 2-form. We endow the vector space $\mathbb{K}^{2\ell}$ with $\omega$ and we say that the linear map $A\colon \mathbb{K}^{2\ell}\rightarrow\mathbb{K}^{2\ell}$  
 is  \emph{symplectic} if 
$A^*\omega=\omega$, that is  $\omega(u,v)=\omega(A(u),A(v)) \text{ for all }u,v\in \mathbb{K}^{2\ell}$.
The $\ell$-times wedging 
$\omega \wedge \omega \wedge \dots \wedge \omega$ is a volume-form on $\mathbb{K}^{2\ell}$ (see e.g. \cite[Lemma 1.3]{R}). We identify the symplectic linear automorphisms with the set of matrices and denote by $sp(2\ell,\mathbb{R})$ 
($\ell\geq 1$), the group of
$2\ell\times 2\ell$ matrices $A$ and with real entries satisfying $A^TJA=J$.

Given a subspace $S\subset \mathbb{K}^{2\ell}$ we denote its $\omega$-\emph{orthogonal complement} by $S^{\perp}$ which is defined by those vectors $u\in \mathbb{K}^{2\ell}$ such that $\omega(u,v) = 0$, for all $v\in S$. Clearly $\dim(S^{\perp})=2\ell-\dim(S)$. When, for a given subspace $S\subset \mathbb{K}^{2\ell}$, we have that $\omega|_{S\times S}$ is non-degenerate (say $S^{\perp}\cap S=\{\vec0\}$), then $S$ is said to be a \emph{symplectic subspace}. We say that the basis $\{e_1,...,e_{\ell},e_{\hat{1}},...e_{\hat{\ell}}\}$ is a \emph{symplectic base} of 
$\mathbb{K}^{2\ell}$ if $\omega(e_i,e_j)=0$, for all $j\not=\hat{i}$ and $\omega(e_i,e_{\hat{i}})=1$.

Now, we begin by describing the set of time-continuous Hamiltonian linear differential systems which are also called Hamiltonian skew-product flows. 
Let $\mathfrak {sp}(2\ell,\mathbb K)$, $\ell\geq 1$, denote the symplectic Lie algebra and let 
$C^{r,\nu}(M, \mathfrak {sp}(2\ell,\mathbb K))$ denote the Banach space of $C^{r+\nu}$ linear differential systems
with values on the Lie algebra $\mathfrak {sp}(2\ell,\mathbb K)$.
Given $H \in C^{r,\nu}(M, \mathfrak {sp}(2\ell,\mathbb K))$ and a smooth flow $X^t\colon M\rightarrow M$, for each $x\in M$ we
consider the non-autonomous linear differential equation
\begin{equation}\label{lve}
\partial_t u(s)\Big|_{s=t}=H(X^{t}(x))\cdot u(t),
\end{equation}
known as \emph{linear variational equation} (or equation of first
variation). Fixing the initial condition $u(0)=\textbf{1}_{2\ell}$ the unique
solution of~\eqref{lve} is called the \emph{fundamental solution} 
related to the system $H$. The solution of
~(\ref{lve}) is a linear flow $\Phi_H^t(x)\colon \mathbb
K^{2\ell}_x\rightarrow  \mathbb K^{2\ell}_{X^{t}(x)}$ in $sp(2\ell,\mathbb{R})$ which may be seen as the
skew-product flow

\begin{equation*}
\begin{array}{cccc}
\Phi_H^t\colon  & M\times\mathbb K^{2\ell} & \longrightarrow  & M\times\mathbb K^{2\ell} \\
& (x,v) & \longrightarrow  & (X^{t}(x),\Phi^{t}_{H}(x) v).
\end{array}
\end{equation*}

Moreover, the cocycle identity
$\Phi^{t+s}_{H}(x)=\Phi^{s}_{H}(X^{t}(x))\circ\Phi_{H}^{t}(x)$ holds
for all $x\in M$ and $t,s\in\mathbb{R}$. Furthermore, the transformation $H$ satisfies
$H(x)=\partial_t\Phi^{t}_{H}(x)|_{t=0}$ for all $x\in M$ and it is referred as the \emph{infinitesimal generator}
associated to $\Phi_{H}^{t}$. It follows from the previous cocycle
identity that, for every $x \in M \;\text{and}\; t \in \mathbb R$, $(\Phi_H^t(x))^{-1}= \Phi_H^{-t}(X^t(x))$.

This coincides with the solution of the differential equation
associated to the infinitesimal generator $-H$, that is, $\partial_t
u(s)|_{s=t}=-H(X^{t}(x))\cdot u(t)$, because time is reversed.

\subsection{Multiplicative ergodic theorem}\label{MET} 

If $\mu$ is a $X^t$-invariant probability measure such that $\log\|\Phi_H^{\pm1}\|\in L^1(\mu)$ then it follows from
 Oseledets' multiplicative ergodic theorem (\cite{O}) that for $\mu$-almost every $x$ there exists a decomposition 
$\mathbb K^d= E^1_x \oplus E^2_x\oplus \dots \oplus E_{x}^{k(x)}$, called, the \emph{Oseledets splitting}, and for $1\le i\le k(x)$ 
there are well defined real numbers
$$
\lambda_i(H,X^t,x)= \lim_{t\to\pm \infty} \frac1t \log \|\Phi_H^t(x) v_i\|,
	\quad \forall v_i \in E^i_x\setminus \{\vec0\}
$$
called the \emph{Lyapunov exponents} associated to $H$, $X^t$ and $x$. It is well known that, if $\mu$ is 
ergodic, then the Lyapunov exponents are almost everywhere constant.
Since we are dealing with symplectic cocycles and $sp(2\ell,\mathbb{R})\subset sl(2\ell,\mathbb{R})$, 
this implies that $\sum_{i=1}^{k(x)} \la_i(H,X^t,x) =0$.
Notice that the spectrum of a symplectic linear transformation is symmetric with respect to the $x$-axis and to $\mathbf{S}^1$. In fact, if $\sigma\in\mathbb{C}$ is an eigenvalue with multiplicity $m$ so is $\sigma^{-1}$, $\overline{\sigma}$ and $\overline{\sigma}^{-1}$ keeping the same multiplicity (see e.g. \cite[Proposition 1.5]{R}). Therefore, since Lyapunov exponents come in pairs in the symplectic setting, then
$\lambda_i(H,X^t,x)=-\lambda_{2\ell-i+1}(H,X^t,x):=-\lambda_{\hat{i}}(H,X^t,x)$ for all $i\in\{1,...,\ell\}$.
So, not counting the multiplicity and abreviating $\lambda(H,X^t,x)=\lambda(x)$, we have the increasing set of real numbers,

$
\lambda_1(x)\geq \lambda_2(x)\geq ... \geq \lambda_{\ell}(x) \geq 0 \geq -\lambda_{\ell}(x)\geq ... \geq -\lambda_2(x)\geq -\lambda_1(x),
$
or, equivalently,
$
\lambda_1(x)\geq \lambda_2(x)\geq ... \geq \lambda_{\ell}(x) \geq 0 \geq \lambda_{\hat{\ell}}(x)\geq ... \geq \lambda_{\hat{2}}(x)\geq \lambda_{\hat{1}}(x).
$

Associated to the Lyapunov exponents we have the Oseledets splitting 
\begin{equation}\label{eq.lyap5}
\mathbb K^{2\ell}
	=E^1_x\oplus E^2_x\oplus ... \oplus E^{\ell}_x \oplus E^{\hat{\ell}}_x\oplus ... \oplus E^{\hat{2}}_x\oplus E^{\hat{1}}_x.
\end{equation}

When, for a given subspace $S\subset \mathbb K^d$, we have that $\omega|_{S\times S}$ is non-degenerate, then $S$ is said to be a \emph{symplectic subspace}. The following two basic results were proved in \cite[Section~3]{BeVar}. 
 
\begin{lemma}\label{positive}
Assume that $x$ is an Oseledets $\mu$-regular point with $2\ell$ distinct Lyapunov exponents and with Oseledets decomposition in 1-dimensional subspaces 
\begin{equation}\label{eq.lyap52}
\mathbb R^{2\ell}
=E^1_x\oplus E^2_x\oplus ... \oplus E^{\ell}_x \oplus E^{\hat{\ell}}_x\oplus ... \oplus E^{\hat{2}}_x\oplus E^{\hat{1}}_x.
\end{equation}
Then, there exists a symplectic basis $\{e_1,...,e_{\ell},e_{\hat{1}},...e_{\hat{\ell}}\}$ in the fiber over $x$ formed by 
the invariant directions given by (\ref{eq.lyap52}). Furthermore, the 2-dimensional subspace $E^i\oplus E^{\hat{i}}$
is symplectic.
\end{lemma}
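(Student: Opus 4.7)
The plan is to exploit symplectic invariance together with the fact that exponential growth rates on different Oseledets subspaces force the symplectic pairing to be zero, unless the corresponding exponents are negatives of each other.

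First, I would record the key algebraic observation: since $\Phi_H^t(x)\in sp(2\ell,\mathbb R)$ for every $t$ and every regular $x$, one has
\begin{equation*}
\omega(v,w)=\omega\bigl(\Phi_H^t(x)v,\Phi_H^t(x)w\bigr)\qquad\text{for all }v,w\in\mathbb R^{2\ell},\ t\in\mathbb R.
\end{equation*}
Now take $v\in E^i_x\setminus\{\vec 0\}$ and $w\in E^j_x\setminus\{\vec 0\}$ with $i,j\in\{1,\dots,\ell,\hat 1,\dots,\hat\ell\}$. By Oseledets' theorem,
\begin{equation*}
\lim_{t\to\pm\infty}\frac{1}{t}\log\|\Phi_H^t(x)v\|=\lambda_i(x),\qquad \lim_{t\to\pm\infty}\frac{1}{t}\log\|\Phi_H^t(x)w\|=\lambda_j(x),
\end{equation*}
so for any $\varepsilon>0$ we get $|\omega(\Phi_H^t(x)v,\Phi_H^t(x)w)|\le\|\omega\|\,e^{(\lambda_i(x)+\lambda_j(x)+2\varepsilon)|t|}\|v\|\|w\|$ for $|t|$ large. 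If $\lambda_i(x)+\lambda_j(x)\neq 0$, choosing the sign of $t$ that makes the exponent negative and letting $t\to\pm\infty$ shows $\omega(v,w)=0$. Hence $\omega(E^i_x,E^j_x)=0$ whenever $\lambda_i(x)+\lambda_j(x)\neq 0$.

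Since by hypothesis the $2\ell$ Lyapunov exponents are all distinct and come in pairs $\lambda_i=-\lambda_{\hat i}$, the only way to have $\lambda_i+\lambda_j=0$ with $i,j\in\{1,\dots,\ell,\hat 1,\dots,\hat\ell\}$ is $j=\hat i$. Consequently each $E^i_x$ is $\omega$-orthogonal to every $E^j_x$ with $j\neq\hat i$. This immediately gives the second assertion: on the two-plane $E^i_x\oplus E^{\hat i}_x$ the form $\omega$ is non-degenerate, for if $v=av_i+bv_{\hat i}$ were in the kernel of $\omega|_{E^i_x\oplus E^{\hat i}_x}$ with $v_i\in E^i_x$, $v_{\hat i}\in E^{\hat i}_x$ nonzero, then pairing with $v_{\hat i}$ and $v_i$ would give $a\,\omega(v_i,v_{\hat i})=0$ and $b\,\omega(v_{\hat i},v_i)=0$, and $\omega(v_i,v_{\hat i})\neq 0$ must hold because otherwise $\omega$ would vanish identically on $E^i_x\oplus E^{\hat i}_x$ while also vanishing between this plane and the $\omega$-orthogonal complement formed by the remaining $E^j_x$'s, contradicting the non-degeneracy of $\omega$ on $\mathbb R^{2\ell}$.

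Finally, I would build the symplectic basis. For each $i\in\{1,\dots,\ell\}$ pick any nonzero $v_i\in E^i_x$ and nonzero $w_{\hat i}\in E^{\hat i}_x$; by the previous paragraph $\alpha_i:=\omega(v_i,w_{\hat i})\neq 0$. Set $e_i:=v_i$ and $e_{\hat i}:=\alpha_i^{-1}w_{\hat i}$, so that $\omega(e_i,e_{\hat i})=1$. For $i\neq j$ we have $e_i\in E^i_x$ and $e_j,e_{\hat j}\in E^j_x\oplus E^{\hat j}_x$, both of which are $\omega$-orthogonal to $E^i_x$ by the computation above, so $\omega(e_i,e_j)=\omega(e_i,e_{\hat j})=0$ whenever the pairing does not correspond to a dual index, yielding a symplectic basis adapted to the Oseledets splitting. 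The main delicate point of the argument is the dichotomy produced by Oseledets and symplectic invariance in the first paragraph; everything else is linear algebra.
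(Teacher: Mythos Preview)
Your argument is correct and is precisely the standard one: use the symplectic invariance $\omega(v,w)=\omega(\Phi_H^t(x)v,\Phi_H^t(x)w)$ together with Oseledets growth to force $\omega(E^i_x,E^j_x)=0$ unless $\lambda_i+\lambda_j=0$, and then use non-degeneracy of $\omega$ to conclude. Note that the paper does not actually supply a proof of this lemma; it simply refers to \cite[Section~3]{BeVar}, where the same argument is carried out, so there is nothing substantively different to compare.

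One small point of exposition: the inequality you wrote,
\[
|\omega(\Phi_H^t(x)v,\Phi_H^t(x)w)|\le\|\omega\|\,e^{(\lambda_i(x)+\lambda_j(x)+2\varepsilon)|t|}\|v\|\|w\|,
\]
is not quite the bound one gets uniformly in the sign of $t$; for $t<0$ the Oseledets estimate yields $\|\Phi_H^t(x)v\|\le e^{(\lambda_i-\varepsilon)t}$ rather than $e^{(\lambda_i+\varepsilon)|t|}$. The conclusion you draw from it is nonetheless correct, since in the next sentence you choose the sign of $t$ so that the product $\|\Phi_H^t(x)v\|\,\|\Phi_H^t(x)w\|$ tends to zero, and that is exactly what the two one-sided Oseledets bounds give. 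It would be cleaner to treat the cases $\lambda_i+\lambda_j>0$ (send $t\to-\infty$) and $\lambda_i+\lambda_j<0$ (send $t\to+\infty$) separately and write the appropriate one-sided estimate in each case.
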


\begin{lemma}\label{null}
Assume that $x$ is an Oseledets $\mu$-regular point with some zero Lyapunov exponent. Then,
the associated invariant Oseledets subspace corresponding to the zero Lyapunov exponent 
has even dimension and it is symplectic.
\end{lemma}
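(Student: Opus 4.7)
The plan is to use invariance of $\omega$ under the cocycle together with the asymptotic growth rates provided by Oseledets' theorem to show that two Oseledets subspaces $E^\lambda_x$ and $E^\mu_x$ are $\omega$-orthogonal whenever $\lambda+\mu\neq 0$. Once this pairing is established, the symplectic nature of the zero-exponent subspace and its even dimension follow from a short dimension count.

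First, fix $v\in E^\lambda_x$ and $w\in E^\mu_x$ with $\lambda+\mu\neq 0$. Since $\Phi_H^t(x)\in sp(2\ell,\mathbb R)$, the form $\omega$ is preserved and
$$\omega(v,w)=\omega\bigl(\Phi_H^t(x)v,\,\Phi_H^t(x)w\bigr)\qquad\text{for all }t\in\mathbb R.$$
For any $\epsilon>0$, Oseledets' theorem at the regular point $x$ gives
$\|\Phi_H^t(x)v\|\le C_\epsilon e^{(\lambda+\epsilon)t}$ for $t\ge 0$ and
$\|\Phi_H^t(x)v\|\le C_\epsilon e^{(\lambda-\epsilon)t}$ for $t\le 0$,
and analogous bounds for $w$. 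Hence
$|\omega(v,w)|\le \|\omega\|\,C_\epsilon^2\, e^{(\lambda+\mu+2\epsilon)t}$ for $t\ge 0$ and
$|\omega(v,w)|\le \|\omega\|\,C_\epsilon^2\, e^{(\lambda+\mu-2\epsilon)t}$ for $t\le 0$.
If $\lambda+\mu>0$, choose $\epsilon<(\lambda+\mu)/2$ and let $t\to-\infty$ in the second estimate; if $\lambda+\mu<0$, do the symmetric thing with $t\to+\infty$. In either case $\omega(v,w)=0$, so $\omega(E^\lambda_x,E^\mu_x)=0$ whenever $\lambda+\mu\neq 0$.

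Now let $E^0_x$ be the Oseledets subspace associated to the zero Lyapunov exponent. By the previous step, every $E^\mu_x$ with $\mu\neq 0$ sits inside the $\omega$-orthogonal complement $(E^0_x)^\perp$. Since
$$\dim (E^0_x)^\perp=2\ell-\dim E^0_x=\dim\bigoplus_{\mu\neq 0}E^\mu_x,$$
we conclude that $(E^0_x)^\perp=\bigoplus_{\mu\neq 0}E^\mu_x$, and therefore $E^0_x\cap (E^0_x)^\perp=\{\vec 0\}$. Equivalently, $\omega|_{E^0_x\times E^0_x}$ is non-degenerate, so $E^0_x$ is symplectic in the sense of Section~\ref{s.statements}. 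Since every symplectic vector space admits a symplectic basis and thus has even dimension, $\dim E^0_x$ is even, completing the proof.

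There is no genuine obstacle here: the argument is essentially a book-keeping of the four sign combinations of $\lambda$ and $\mu$ in the Oseledets estimates, and once the orthogonality relation $\omega(E^\lambda_x,E^\mu_x)=0$ for $\lambda+\mu\neq 0$ is established, everything else is a one-line dimension count.
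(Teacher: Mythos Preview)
Your argument is correct and is the standard one: use symplectic invariance of the cocycle together with the Oseledets growth estimates to force $\omega(E^\lambda_x,E^\mu_x)=0$ whenever $\lambda+\mu\neq0$, and then read off that $E^0_x$ is symplectic (hence even-dimensional) from a dimension count. Note that the present paper does not actually prove Lemma~\ref{null} in the text; it simply cites \cite[Section~3]{BeVar}, so there is no in-paper proof to compare line by line, but your approach is precisely the classical argument one expects to find there.
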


\subsection{The dynamical linear differential system and invariant manifold theory}\label{IM} 

Let $X^t\colon M\rightarrow M$ be a nonuniformly hyperbolic flow, that is, a flow such that all the Lyapunov exponents of its dynamical linear differential system $DX^t$ (i.e. the one given by its tangent map) are different from zero, except, of course, the flow direction which has always zero Lyapunov exponent. Assume that there exists a set $\textbf{N}\subset M$ of regular points with some negative Lyapunov exponents for $X^t$.
For a full detailed exposition about the formalism of Lyapunov exponents for flows see ~\cite{BR}.  For any $x\in \textbf{N}$ we fix $\tau_x$ such that $0<\tau_x<\min\{|\la_i(x)|: \la_i(x)<0\}$. There exist a measurable function $K\colon \textbf{N}\rightarrow (0,+\infty)$ and, given any $x\in \textbf{N}$, a \emph{local stable manifold} $W^{s}_{loc}(x)$ such that:
\begin{enumerate}
\item [(i)] $T_x W^{s}_{loc}(x)=E^s_x$, i.e. the stable subspace integrates on $W^{s}_{loc}(x)$ and
\item [(ii)] $d(X^t(x),X^t(y))\leq K_x \,e^{-\tau_x\,t}d(x,y)$, for every $y\in W^{s}_{loc}(x)$.
\end{enumerate}
Let $W^{s}(x):=\{X^{-t}(W^{s}_{loc}(X^t(x)));t>0\}$ stands for the \emph{global stable manifold} of $x$. The saturate of $W^{s}(x)$ defined by $\cup_{t\in\mathbb{R}}X^t(W^{s}(x))=\cup_{t\in\mathbb{R}}W^{s}(X^t(x))$ and denoted by $W^{ws}(x)$ is sometimes called \emph{weak stable manifold}. In an analogous way, and of course if the flow has positive Lyapunov exponents, we define \emph{local unstable}, \emph{unstable} and \emph{weak stable manifolds} of $x$, which are the stable objects for the flow $X^{-t}$ and are denoted respectively by  $W^{u}_{loc}(x)$, $W^{u}(x)$ and $W^{wu}(x)$. Like in diffeomorphisms these objects vary measurably with the point $x$ and so one can
consider \emph{hyperbolic blocks} $\cH(K,\tau)$ on which both the local invariant manifolds 
$W_{\text{loc}}^s(x)$ and $W_{\text{loc}}^u(x)$ are uniform and vary continuously with $x\in \cH(K,\tau)$.

Oftentimes it is useful to work with the \emph{linear Poincar\'e flow} of the vector field $X$. This linear flow is defined by the projection on the normal subsection of the flow $N$, formally by $P_X^t(p):=\Pi_{X^t(p)}\circ DX^t_p$ where $\Pi_{X^t(p)}$ stands for the projection in the normal fiber $N_{X^t(p)}$ at $X^t(p)$. In fact the linear Poincar\'e flow is the derivative of the Poincar\'e map of the flow $\mathcal{P}_X^t$.
The following result says that, if $X$ is nonuniformly hyperbolic, then so it is its linear Poincar\'e flow.

\begin{lemma}\label{Pesin}
Let $X^t$ be a nonuniformly hyperbolic flow associated to the measure $\mu$ with invariant decomposition $E_1^s(x)\oplus ...\oplus E^s_{i}(x)\oplus E^u_1(x)\oplus ...\oplus E^u_{j}(x)$ on a $\mu$-generic point $x$ and with associated Lyapunov exponents respectively $\lambda^s_1(x)$, ..., $\lambda^s_i(x)<0$ and $\lambda^u_1(x)$,..., $\lambda^u_j(x)>0$. The Lyapunov exponents of $P_{X}^{t}(x)$ associated to the projected decomposition $N_1^s(x)\oplus ...\oplus N^s_{i}(x)\oplus N^u_1(x)\oplus ...\oplus N^u_{j}(x)$, where $N^\sigma_k(x)=\Pi_{x} (E^\sigma_k(x))$,  are respectively $\lambda^s_1(x)$, ..., $\lambda^s_i(x)<0$ and $\lambda^u_1(x)$,..., $\lambda^u_j(x)>0$.
\end{lemma}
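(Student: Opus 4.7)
The plan is to compare, orbit by orbit, the action of the tangent cocycle $DX^t$ on the Oseledets subspaces $E^{\si}_k(x)$ with the action of the linear Poincar\'e flow $P_X^t$ on their normal projections $N^{\si}_k(x)=\Pi_x(E^{\si}_k(x))$. I would start by noting that each non-zero-exponent Oseledets subspace is transverse to the flow direction $E^{0}(x)=\mathbb{R}X(x)$ (these correspond to distinct Oseledets subspaces with distinct exponents), so the restriction $\Pi_x|_{E^{\si}_k(x)}\colon E^{\si}_k(x)\to N^{\si}_k(x)$ is a linear isomorphism, and the subspaces $N^{\si}_k(x)$ form a direct sum decomposition of the normal fiber $N_x$.

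The next step is the algebraic identity
\begin{equation*}
P_X^t(x)\circ\Pi_x\big|_{E^{\si}_k(x)}=\Pi_{X^t(x)}\circ DX_x^t\big|_{E^{\si}_k(x)},
\end{equation*}
which is immediate because $v-\Pi_x(v)\in\mathbb{R}X(x)$ and $DX^t_x$ preserves the flow direction, so $DX_x^t(v-\Pi_x(v))\in\mathbb{R}X(X^t(x))$ lies in the kernel of $\Pi_{X^t(x)}$. From this it follows at once that $P_X^t(x)(N^{\si}_k(x))=N^{\si}_k(X^t(x))$, hence the projected decomposition is $P_X^t$-invariant, and the problem reduces to comparing exponential growth rates along corresponding subspaces.

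For the growth rates, given $v\in E^{\si}_k(x)\setminus\{\vec0\}$, Oseledets' theorem gives $\tfrac{1}{t}\log\|DX^t_x(v)\|\to\la^{\si}_k(x)$. Setting $w_t:=DX_x^t(v)\in E^{\si}_k(X^t(x))$, the sandwich
\begin{equation*}
\sin\bigl(\angle(E^{\si}_k(X^t(x)),\mathbb{R}X(X^t(x)))\bigr)\,\|w_t\|\le\|\Pi_{X^t(x)}w_t\|\le C\,\|w_t\|
\end{equation*}
would then transfer the asymptotic to $P_X^t(x)(\Pi_x(v))=\Pi_{X^t(x)}w_t$, provided the angle factor on the left decays at most subexponentially along the orbit of $x$.

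The hard part will be precisely this last ingredient: the subexponential decay of the angle between the Oseledets subspace $E^{\si}_k(X^t(x))$ and the flow line $\mathbb{R}X(X^t(x))$. It is a classical consequence of Oseledets' theorem (see \cite{BP}), but one has to be careful since the flow direction is itself an Oseledets subspace, namely the one with zero exponent. A cleaner alternative would be to work throughout in a Pesin/Lyapunov adapted norm on $TM$ in which distinct Oseledets subspaces are orthogonal and $\Pi_x$ restricts to a uniformly bi-Lipschitz isomorphism on each $E^{\si}_k(x)$; since the adapted norm differs from the ambient one only by subexponential factors, the Lyapunov exponents computed in either norm coincide, which makes the identification $\la^{\si}_k(P_X^t,x)=\la^{\si}_k(DX^t,x)$ automatic and yields the stated $P_X^t$-invariant splitting with the claimed spectrum.
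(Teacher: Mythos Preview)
Your argument is correct and follows essentially the same route as the paper: both reduce the computation of $\tfrac{1}{t}\log\|P_X^t(x)u\|$ to $\tfrac{1}{t}\log\|DX^t_x v\|$ via the identity $P_X^t(x)\circ\Pi_x=\Pi_{X^t(x)}\circ DX^t_x$ on $E^{\si}_k(x)$, and both conclude by invoking the subexponential behavior of the angle $\theta_t(x)=\measuredangle(X(X^t(x)),E^{\si}_k(X^t(x)))$ from Oseledets' theorem. Your sandwich inequality and explicit invariance statement are slightly more careful than the paper's direct equality $\|\Pi_{X^t(x)}DX^t_xv\|=\sin(\theta_t(x))\|DX^t_xv\|$, and the Lyapunov-norm alternative you sketch is a legitimate shortcut the paper does not use, but the core argument is the same.
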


\begin{proof}
Let us consider any $\mu$-generic point $x$ and ${u}\in{N^{u}_k(x)}\setminus\{\vec0\}$, for some $k=1,...,j$, and denote by
$\theta_{t}(x)=\measuredangle(X(X^{t}(x)),E^{u}(X^{t}(x)))$ the angle between the flow direction $X(X^t(x))$ and the direction $E^{u}(X^{t}(x))$. Then, for some $\alpha\in{\mathbb{R}}$ and $v\in{E^{u}_k(x)}$, we have
\begin{eqnarray*}
\lim_{t\rightarrow{\pm{\infty}}}\frac{1}{t}\log{\|P^{t}_{X}(x)\cdot u\|}
	&=&\lim_{t\rightarrow{\pm{\infty}}}\frac{1}{t}\log{\|\Pi_{X^{t}(x)}}\circ{DX^{t}_{x}}\cdot(\alpha X(x)+v)\|\\
	&=&\lim_{t\rightarrow{\pm{\infty}}}\frac{1}{t}\log{\|\alpha\Pi_{X^{t}(x)}}\circ{X(X^{t}(x))}+\Pi_{X^{t}(x)}\circ{DX^{t}_{x}}\cdot v\|\\
&=&\lim_{t\rightarrow{\pm{\infty}}}\frac{1}{t}\log\left({\sin(\theta_{t}(x))\|{DX^{t}_{x}}\cdot{v}}\|\right)\\
&=&\lim_{t\rightarrow{\pm{\infty}}}\frac{1}{t}\log{\sin(\theta_{t}(x))+\lim_{t\rightarrow{\pm{\infty}}}\frac{1}{t}\log{\|{DX^{t}_{x}}\cdot{v}}\|}\\
&=&\lambda^{u}_k(x),
\end{eqnarray*}
because $\underset{t\rightarrow{\pm{\infty}}}{\lim}\frac{1}{t}\log\sin(\theta_{t}(x))=0$ which is a consequence of the sub exponential growth of the angle given by the Oseledets theorem (see e.g. \cite{BP}).
The calculus for $N^{s}_k(x)$, $k=1,...i$, is analog.
\end{proof}

It follows from Pesin's theory (see e.g.~\cite{BP}) and Lemma~\ref{Pesin} that given $x\in \textbf{N}$, thus with negative Lyapunov exponents for the Poincar\'e map $\mathcal{P}_X^t$,  and fixed $\tau_x$ such that $0<\tau_x<\min\{|\la_i(x)|: \la_i(x)<0\}$, there exist a measurable function $\tilde K\colon \textbf{N}\rightarrow (0,+\infty)$ and a \emph{local stable normal manifold} $\mathscr{N}^{s}_{loc}(x)$ such that:
\begin{enumerate}
\item [(i)] $T_x \mathscr{N}^{s}_{loc}(x)=N^s(x)$  and
\item [(ii)] $d(\mathcal{P}_X^t(x),\mathcal{P}_X^t(y))\leq \tilde K_x \,e^{-\tau_x\,t}d(x,y)$, for every $y\in \mathscr{N}^{s}_{loc}(x)$.
\end{enumerate}
We observe that, in local charts on $\mathbb{R}^d$, $\mathscr{N}^{s}_{loc}(x)$ is the intersection of $W^{ws}(x)$ with a local small normal section to the flow on $x$.

Now, we proceed to describe the local product structure for non-atomic $(X^t)_t$-invariant and ergodic probability measures $\mu$. 
For that we will recall the existence of tubular neighborhoods (see e.g. \cite{AM,PdM}). Given a regular point $x\in M$ for the vector field $X$ the \emph{tubular neighborhood theorem} asserts the existence of a positive $\delta=\delta_x>0$, an open neighborhood $U^\delta_x$ of $x$, and a diffeomorphism $\Psi_x: U^\delta_x \to (-\delta,\delta)\times B(x,\delta) \subset \mathbb R \times \mathbb R^{d}$,  $B(x,\delta)$ is identified with the ball $B(\vec0,\delta)\cap\langle(1,0,...,0)^\perp\rangle$, where $\langle(1,0,...,0)^\perp\rangle$ stands for the hyperspace perpendicular to the vector $(1,0...,0)$,  such that the vector field $X$ on $U^\delta_x$ is the pull-back of the vector field $Y:= (1,0, \dots, 0)$ on $(-\delta,\delta)\times B(x,\delta)$. More precisely,  $Y=(\Psi_x)_* X:=D(\Psi_x)_{\Psi_x^{-1}}X(\Psi_x^{-1})$. In this case the associated flows are conjugated, that is, $Y^t(\cdot)=\Psi_x(X^t(\Psi_x^{-1}(\cdot)))$ for $t$ small enough.

So, given $x\in \cH(K,\tau)$ and $\epsilon>0$ small enough the size of both invariant manifolds $W^u_{\text{loc}}(X^{t}(y))$ and $W^u_{\text{loc}}(X^{t}(y))$  have size at least $\epsilon$ for all $y\in \cH(K,\tau) \cap U_x^\delta$ 
and all $t$ so that $X^t(y)\in U_x^\delta$.
In consequence, if one considers the section $\Sigma_x=\Psi_x^{-1}(\{0\}\times B(x,\delta))$ through the point $x$ 
then for any $y\in \cH(K,\tau) \cap U_x^\delta$ the intersection $\cF_y^s=W^{ws}_{loc}(y)\cap \Sigma_x$ 
(respectively $\cF_y^u=W^{wu}_{loc}(y)\cap \Sigma_x$) defines a smooth and long stable (respectively unstable) leaf on $\Sigma_x$.

Since the angles of strong stable and unstable foliations are bounded away from zero on hyperbolic blocks it is not hard to check that for all $y,z\in \cH(K,\tau) \cap U_x^\delta$ the intersection $[y,z]_{\Sigma_x}:=\cF^{u}_y \pitchfork \cF^{s}_{z}$ consists of a unique point, provided that $\delta$ is small. Set 
$$
\cN_x^u(\delta)=\{ [x,y]_{\Sigma_x} \in  \cF^u_x : y \in \cH(K,\tau) \cap U_x^\delta \} \subset \Sigma_x
$$ 
to be a $u$-neighborhood of $x$ in $\Sigma_x$ and 
$$
\cN_x^s(\delta)=\{ [y,x]_{\Sigma_x} \in  \cF^s_x : y \in \cH(K,\tau) \cap U_x^\delta \} \subset \Sigma_x
$$
an $s$-neighborhood of $x$ in $\Sigma_x$.
Set $\cN_\de(x):=\cH(K,\tau) \cap U_x^\delta$ to be a neighborhood of $x$ in $\cH(K,\tau)$. Then the 
map 

\begin{equation*}
\begin{array}{cccc}
\Upsilon_x\colon  & \cN_\de(x) & \longrightarrow  & \cN_x^u(\delta) \times \cN_x^s(\delta)\times (-\delta,\delta) \\
& y & \longrightarrow  & ( [x,y]_\Sigma , [y,x]_\Sigma, t(y)),
\end{array}
\end{equation*}
with $X^{t(y)}(y)\in\Sigma_x$ is a homeomorphism. 
We can now define local product structure for flows.

\begin{definition}\label{def:lpsf}
A hyperbolic measure $\mu$ has \emph{local product structure} if for every $x\in \supp(\mu)$ and a small $\de>0$ the measure $\mu\!\mid_{\cN_x(\de)}$ is equivalent to the product measure $\mu^u_x \times \mu^s_x\times Leb$, where
$\mu_x^{i}$ denotes the conditional measure of $(\Upsilon_x)_*(\mu\!\mid_{\cN_x(\de)})$ on  $\cN^{i}_x(\de)$, for $i\in{u,s}$ and $Leb$ stands for the 1-dimensional Lebesgue measure along the flow direction. We denote by
$\mu_\Sigma$ the marginal measure of $\mu\!\mid_{\cN_x(\de)}$ on $\Sigma$ obtained via projection along the flow direction.
\end{definition}

\begin{remark}
Our previous computations show that hyperbolic blocks for flows locally contain pieces of orbits.
Moreover, it will be important to notice that the previous property implies that, up to reduce $B(x,\vep)$, it will be independent of the size of the tube by the long tubular neighborhood theorem.
\end{remark}

\begin{remark}
Our definition does not depend on the smooth cross section through the point. In particular we may assume without loss of generality that local strong stable and unstable manifolds through $x$ are contained in $\Sigma_x$ and consequently $\cF^s_x\subset W^s_{loc}(x)$
and $\cF^u_x\subset W^u_{loc}(x)$.
\end{remark}

Let us mention that despite the fact that it seems a strong condition, the local product structure property is satisfied by 
all equilibrium states associated to H\"older continuous potentials and Axiom A flows
(see~\cite{BR75}) and by suspension (semi)flows of maps with a probability measure satisfying the corresponding local product structure of 
Definition 2.1 in \cite{BeVar}.

\subsection{Statement of the results}

The following result answered positively to problem 4 in \cite{Viana}.

\begin{theorem}(\cite[Theorem A]{BeVar})\label{Viana1}
Let $M$ be a compact Riemannian manifold. Take $f\in \mbox{Diff}^{~1+\alpha}(M)$ $(\alpha >0)$
and an $f$-invariant, ergodic and hyperbolic probability measure $\mu$ with local product structure. 
Then, there exists  an open and dense set of maps $\mathscr{O}$ in $C^{r,\nu}(M,sp(2\ell,\mathbb K))$ 
such that for any $A\in\mathscr{O}$ the cocycle $F_A$ has at least one positive Lyapunov exponent 
at $\mu$-almost every point. Moreover, the complement is a set with infinite codimension.
\end{theorem}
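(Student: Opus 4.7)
The plan is to adapt Viana's argument from \cite{Viana} for $sl(d,\RR)$-cocycles to the symplectic setting, following the strategy of \cite{BeVar} and circumventing the obstruction pointed out in \cite[p.~678]{Viana} that the symplectic holonomy map fails to be a submersion for dimensional reasons. First, for $r+\nu$ large enough and $A$ in a suitable neighborhood, one checks that the cocycle $F_A$ is fiber-bunched, so that the stable and unstable holonomies $H^s_{A,x,y}$ (for $y\in W^s_{\mathrm{loc}}(x)$) and $H^u_{A,x,y}$ (for $y\in W^u_{\mathrm{loc}}(x)$) are well defined, symplectic, and depend continuously on $A$ and on the base points. Together with the continuity of the Lyapunov exponents in the fiber-bunched regime, this reduces the theorem to the density statement, which is the main content.

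For density I would argue by contradiction through a Ledrappier-type criterion \cite{L}: if on some non-empty open set of cocycles all exponents of $F_A$ were to vanish on a positive $\mu$-measure set, then there would exist an $F_A$-invariant probability measure $m$ on the projective bundle $M\times \Proj^{2\ell-1}$ projecting to $\mu$, whose disintegrations $\{m_x\}_{x\in M}$ are invariant under both families of holonomies. Combined with the local product structure of $\mu$ given by the charts $\Upsilon_x$, this forces the holonomy-loop operator
\begin{equation*}
\Psi_A(x;y,z) := H^u_{A,z,x}\circ H^s_{A,[y,z],z}\circ H^u_{A,y,[y,z]}\circ H^s_{A,x,y}
\end{equation*}
to preserve $m_x$ for $\mu^s_x\times\mu^u_x$-almost every pair $(y,z)$ in a product neighborhood of $x$. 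A suitable countable family of such loop conditions, indexed by pairs of transverse heteroclinic points in $\supp\mu$, then encodes the rigidity that a perturbation must destroy.

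The genuine obstacle, and the point at which the symplectic case diverges from $sl(d,\RR)$, is to show that the invariance condition above cuts out a closed subset of empty interior, in fact of infinite codimension, in $C^{r,\nu}(M,sp(2\ell,\mathbb K))$. The submersion argument of \cite{Viana} breaks down because $\dim sp(2\ell,\RR)=\ell(2\ell+1)<2\ell(2\ell-1)$, so a single perturbation cannot realize an arbitrary $GL(2\ell,\CC)$-element as a variation of holonomy. Following \cite{BeVar}, I would fix a hyperbolic periodic point $p\in\supp\mu$ and a transverse heteroclinic point $z\in W^u(p)\pitchfork W^s(p)$, and construct an explicit one-parameter family of symplectic perturbations $A_\vep$ of $A$ supported in a small neighborhood of $z$, so that the unstable holonomy along the $W^u(p)$-branch remains unchanged while the stable holonomy along the $W^s(p)$-branch is pre-composed with a prescribed symplectic element near the identity. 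To absorb the dimensional mismatch I would exploit the invariant symplectic decomposition supplied by Lemmas~\ref{positive}--\ref{null}: since the stabilizer of the family $\{m_x\}$ inside $sp(2\ell,\RR)$ has strictly positive codimension, perturbations transverse to it destroy the invariance. Iterating along countably many mutually independent heteroclinic points then yields perturbations in a family of mutually transverse codimension-one slices, delivering the infinite codimension statement and completing the proof.
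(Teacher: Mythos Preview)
This theorem is quoted from \cite{BeVar} and is \emph{not} proved in the present paper; it is used as the discrete-time input for the continuous-time Theorem~\ref{thm:flow}. So there is no ``paper's own proof'' to compare against directly. Nevertheless, the paper does describe the strategy of \cite{BeVar} in the introduction and mirrors it closely in the suspension-flow argument of Section~\ref{sec:time.continuous.suspension}, so one can compare your outline against that.

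Your second and third paragraphs are broadly on the right track: zero exponents force holonomy-invariance of the disintegration (Corollary~\ref{measures}, i.e.\ \cite[Proposition~4.3]{BeVar}), one produces dominated periodic points $p_i$ with simple real spectrum so that the conditional measures $m_{p_i}$ are atomic on eigendirections (\cite[Lemma~4.4, Proposition~4.7]{BeVar}), and one then perturbs near a heteroclinic point $z_i\in W^u_{\mathrm{loc}}(p_i)\cap W^s_{\mathrm{loc}}(p_{i+1})$ so as to leave $L^u$ unchanged while moving $L^s$ off the finitely many eigenlines (the ``Breaking lemma'' \cite[Lemma~4.8]{BeVar}). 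The infinite codimension comes from doing this at $k$ disjoint heteroclinic intersections for every $k\ge 2$.

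There is, however, a genuine gap in your first paragraph. You write that ``for $r+\nu$ large enough and $A$ in a suitable neighborhood, one checks that the cocycle $F_A$ is fiber-bunched'' and then invoke ``continuity of the Lyapunov exponents in the fiber-bunched regime'' to reduce to density. Neither step is available here. The theorem is stated for \emph{all} $A\in C^{r,\nu}(M,sp(2\ell,\mathbb K))$, with no fiber-bunching hypothesis, and Lyapunov exponents are not continuous on this space. The correct reduction (following \cite[Corollary~2.4]{Viana}, cf.\ Lemma~\ref{2.4} here) is: if $\lambda^+(A,\mu)=0$ then for every $\theta>0$ there exist holonomy blocks $\cO=\cH(K,\tau)\cap\mathscr D_A(N,\theta)$ of measure arbitrarily close to $1$, and it is \emph{on these blocks} that holonomies exist and the rigidity argument runs. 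Openness is then obtained not from continuity of exponents but because the ``broken'' condition $L^s_{A,p_i,z_i}(v_j)\notin\langle w_k\rangle$ is manifestly $C^{r,\nu}$-open. Your holonomy-loop operator $\Psi_A(x;y,z)$ is also not how the argument is organized: one works directly with the equality $(h^u_{p,z})_*m_p=(h^s_{q,z})_*m_q$ at the heteroclinic point, exploiting that $m_p,m_q$ are finite combinations of Dirac masses, rather than with a stabilizer-codimension count.
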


The following results are related to Problem 6 of \cite{Viana}
and are a continuous-time counterpart of the previous theorem.

\begin{maintheorem}\label{thm:flow}
Let $M$ be a compact Riemmanian manifold, $X^t$ be a $C^{1+\alpha}$-flow on $M$ preserving
a probability measure $\mu$. Assume that $\mu$ is hyperbolic and has local product structure. 
Then, there exists  an open and dense set of maps $\mathscr{O}$ in $C^{r,\nu}(M,\mathfrak{sp}(2\ell,\mathbb K))$ 
such that for any $H\in\mathscr{O}$ the cocycle $\Phi_H^t$ has at least one positive Lyapunov exponent 
at $\mu$-almost every point. Moreover, the complement is a set with infinite codimension.
\end{maintheorem}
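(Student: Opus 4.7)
The plan is to reduce Theorem~\ref{thm:flow} to the discrete-time Theorem~\ref{Viana1} via cross-sections and induced cocycles, following the two-stage outline sketched in the introduction: first establish the result for suspension flows, then deal with the general case by passing to a Poincar\'e section through a hyperbolic block. Throughout, the key conceptual point is that Lyapunov exponents are invariant under suitable time changes, while the perturbation theory must be carried out at the level of the infinitesimal generator $H$ rather than the discrete cocycle itself.

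For the suspension case, let $X^t$ be the suspension of a base map $f\colon\Sigma\to\Sigma$ with bounded roof function $r$, and let $\hat\mu$ be the $f$-invariant measure on $\Sigma$ induced by $\mu$; by Definition~\ref{def:lpsf} together with the product structure of a suspension, $\hat\mu$ inherits local product structure from $\mu$. Given $H\in C^{r,\nu}(M,\mathfrak{sp}(2\ell,\mathbb K))$, I would define the induced symplectic cocycle $\Psi_H\colon\Sigma\to sp(2\ell,\mathbb K)$ by $\Psi_H(x):=\Phi_H^{r(x)}(x)$. Using the cocycle identity $\Phi_H^{t+s}=\Phi_H^s(X^t(\cdot))\circ\Phi_H^t$ and Kingman's subadditive ergodic theorem, the Lyapunov exponents of $\Psi_H$ over $(f,\hat\mu)$ equal $\int r\,d\hat\mu$ times those of $\Phi_H^t$ over $(X^t,\mu)$. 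Thus existence of a positive exponent transfers between the two, and applying Theorem~\ref{Viana1} to $\Psi_H$ produces the desired open and dense set at the discrete level.

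The general flow case is reduced to the suspension case by using Pesin theory and the long tubular neighborhood theorem discussed in \S\ref{IM}. I would pick a hyperbolic block $\cH(K,\tau)$ of positive measure, fix $x_0\in \supp(\mu)\cap\cH(K,\tau)$, and consider the smooth cross-section $\Sigma_{x_0}=\Psi_{x_0}^{-1}(\{0\}\times B(x_0,\delta))$ of Section~\ref{IM}. The first return map $f_\Sigma$ is defined $\mu_\Sigma$-almost everywhere on $\Sigma_{x_0}$, and the marginal measure $\mu_\Sigma$ of Definition~\ref{def:lpsf} is $f_\Sigma$-invariant, ergodic and hyperbolic (by Lemma~\ref{Pesin}); moreover it has local product structure on $\Sigma_{x_0}$ because the holonomies of $\cF^s_y,\cF^u_y$ are exactly the factors of the product decomposition. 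Setting $\Psi_H(y):=\Phi_H^{\tau(y)}(y)$ with $\tau$ the first return time, I then reduce as in the suspension case and invoke Theorem~\ref{Viana1} for $(f_\Sigma,\mu_\Sigma,\Psi_H)$.

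The main obstacle, which the authors flag in \S\ref{PH} and in the introduction, is that perturbations live on the space of infinitesimal generators $H\in C^{r,\nu}(M,\mathfrak{sp})$, not on the induced cocycle $\Psi_H$ itself; one must show that the assignment $H\mapsto\Psi_H$ is sufficiently submersive for the open-dense and infinite-codimension conclusions to pull back. Concretely, given a small perturbation of $\Psi_H$ supported near a point $y\in\Sigma_{x_0}$, I would realize it by a perturbation $\tilde H = H + K$ with $K$ a $C^{r,\nu}$ bump supported in a short flow-box $X^{[t_1,t_2]}(y)$ disjoint from the finitely many heteroclinic points whose holonomies must be preserved. Since $\exp\colon\mathfrak{sp}(2\ell,\mathbb K)\to sp(2\ell,\mathbb K)$ is a local diffeomorphism at the identity and the fundamental solution $\Phi_H^t$ is obtained by path-ordered exponentiation, to first order
\begin{equation*}
\Psi_{\tilde H}(y)-\Psi_H(y)=\int_{t_1}^{t_2}\Phi_H^{\tau(y)-s}(X^s(y))\,K(X^s(y))\,\Phi_H^s(y)\,ds+o(\|K\|),
\end{equation*}
and the map $K\mapsto$ (this integral) is surjective onto $\mathfrak{sp}(2\ell,\mathbb K)$ as $K$ ranges over $C^{r,\nu}$ bumps in the flow-box. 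This, together with careful bookkeeping of how the stable/unstable holonomies of $\Psi_H$ depend on $H$ (the technical analog, in our continuous-time setting, of the submersion lemma used in~\cite{BeVar}), transfers openness, density and the infinite codimension property from the discrete statement in Theorem~\ref{Viana1} to the present theorem.
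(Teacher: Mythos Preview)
Your two-stage outline (suspension flows first, then general flows via a local cross-section) matches the paper's architecture, and your identification of the central difficulty---that perturbations must be performed on the infinitesimal generator $H$ rather than on the induced discrete cocycle $\Psi_H$---is exactly right. However, the way you propose to resolve this difficulty differs substantively from the paper, and the black-box invocation of Theorem~\ref{Viana1} hides a real gap.

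Theorem~\ref{Viana1} produces an open and dense set inside $C^{r,\nu}(\Sigma, sp(2\ell,\mathbb K))$, but the image of $H\mapsto\Psi_H$ is a thin subset of that space, so neither openness nor density pulls back automatically. Your submersion heuristic addresses only the single-fiber map $H\mapsto\Psi_H(y)$; what is actually needed is control of the \emph{holonomies} $L^s_{\Psi_H,p,z}$, which are infinite products of values of $\Psi_H$ along an orbit, and this is precisely the ``careful bookkeeping'' you defer. The paper does not attempt a general submersion or pull-back argument. Instead it re-runs the proof of Theorem~\ref{Viana1} with every perturbation realized directly at the level of $H$: the key device is the realization Lemma~\ref{perturb2}, which for any symplectic $S$ close to the identity constructs $H_0$ close to $H$ with $\Phi_{H_0}^1(x)=\Phi_H^1(x)\circ S$, supported in a single flowbox. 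With this tool the paper reproduces the holonomy-breaking scheme of~\cite{BeVar} (find $k$ dominated periodic points, perturb in $k$ pairwise disjoint flowboxes along the heteroclinic orbits so that stable holonomies are altered while unstable ones are untouched) entirely within $C^{r,\nu}(M,\mathfrak{sp}(2\ell,\mathbb K))$; this is also what delivers the infinite-codimension statement.

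For the general-flow case your reduction is too optimistic. The first return map $f_\Sigma$ to a local section is not defined on a compact manifold, the return time is unbounded, and $\Psi_H$ need not lie in any $C^{r,\nu}$ class to which Theorem~\ref{Viana1} applies. The paper works around this by staying local: it establishes nonuniform hyperbolicity for $f_\Sigma$ via Lemma~\ref{Pesin} and Proposition~\ref{prop:local.construction}, builds holonomies for the induced cocycle on holonomy blocks (Proposition~\ref{holono}), and---crucially---uses a flow version of Katok's closing lemma (Theorem~\ref{KSL} and Proposition~\ref{KatokViana}) to manufacture the required hyperbolic periodic orbits \emph{inside} the holonomy block. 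Only then is the breaking perturbation carried out, again through Lemma~\ref{perturb2}. None of these steps is captured by a direct appeal to Theorem~\ref{Viana1}.
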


It follows from the work of Bowen, Ruelle \cite{BR75} that every $C^{1+\alpha}$ uniformly hyperbolic flow is 
(semi)con\-ju\-ga\-ted to a suspension semiflow over a subshift of finite type with an H\"older continuous roof function
and that every equilibrium state associated to an H\"older continuous potential has the local product structure. Hence we obtain the following:

\begin{corollary}\label{cor:hypflow}
Given a $C^{1+\alpha}$ ($\alpha>0$) flow $X^t\colon M\rightarrow M$, $\La$ a hyperbolic set and $\mu$ an equilibrium state for $X^t\mid_{\La}$ with respect to an H\"older continuous potential: there is an open and 
dense set of (fiber-bunched) maps $\mathscr{O}$ in $C^{r,\nu}(M,\mathfrak{sp}(2\ell,\mathbb K))$ such that, if $H\in\mathscr{O}$, then the cocycle $\Phi^t_H$ has at least one positive Lyapunov exponent at $\mu$-almost every point. Moreover, their complement is a set with infinite codimension.
\end{corollary}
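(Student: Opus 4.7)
The plan is to derive the corollary as a direct consequence of Theorem \ref{thm:flow} by verifying that an equilibrium state $\mu$ associated to an H\"older continuous potential on a hyperbolic basic set $\Lambda$ satisfies the two standing hypotheses of that theorem, namely that $\mu$ is hyperbolic and has local product structure. Once this is done, Theorem \ref{thm:flow} supplies the open and dense subset $\mathscr{O} \subset C^{r,\nu}(M,\mathfrak{sp}(2\ell,\mathbb K))$ with the required property and the infinite codimension statement for the complement.

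To check hyperbolicity, I would invoke the uniform hyperbolicity of $\Lambda$: the tangent bundle splits continuously as $T_\Lambda M = E^s \oplus \langle X \rangle \oplus E^u$ with uniform exponential contraction on $E^s$ and uniform exponential expansion on $E^u$ under $DX^t$. Consequently, for every $X^t$-invariant probability measure supported on $\Lambda$ (in particular for $\mu$), the Lyapunov exponents of $DX^t$ on $E^s$ are uniformly negative, those on $E^u$ are uniformly positive, and the exponent along the flow direction vanishes. Applying Lemma \ref{Pesin}, the exponents of the linear Poincar\'e flow $P_X^t$ coincide with the nonzero exponents of $DX^t$, so all of them are bounded away from zero. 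This is precisely the hyperbolicity of $\mu$ that Theorem \ref{thm:flow} requires.

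For the local product structure, I would appeal to the classical theory of Bowen and Ruelle \cite{BR75}. By that work, the restriction of $X^t$ to $\Lambda$ is semiconjugate to a suspension flow over a subshift of finite type $\sigma\colon\Sigma_A \to \Sigma_A$ with an H\"older continuous roof function $r$, and any equilibrium state $\mu$ for an H\"older potential is the lift of the product $\mu_\sigma\times \mathrm{Leb}/\int r\,d\mu_\sigma$, where $\mu_\sigma$ is the (unique) Gibbs equilibrium state of the induced H\"older potential on the base. The Gibbs property of $\mu_\sigma$ yields local product structure in the base in the sense of \cite[Definition~2.1]{BeVar}, i.e.\ on each cylinder the measure is equivalent to the product of its conditionals along cylinders of future and past symbols. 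Pushing this to $M$ through the semiconjugacy and using the tubular/flow-box coordinates described before Definition \ref{def:lpsf}, the disintegration $\mu_x^u \times \mu_x^s \times \mathrm{Leb}$ that appears in that definition is exactly the image of the symbolic product decomposition. Hence $\mu$ has local product structure in the sense of Definition \ref{def:lpsf}.

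With both hypotheses verified, Theorem \ref{thm:flow} gives an open and dense set $\mathscr{O}$ in $C^{r,\nu}(M,\mathfrak{sp}(2\ell,\mathbb K))$ whose elements have at least one positive Lyapunov exponent at $\mu$-almost every point, and whose complement has infinite codimension. The parenthetical ``fiber-bunched'' qualifier in the statement is automatic: the fiber-bunching condition defines an open subset containing cocycles sufficiently close to constants, and the perturbative construction in Theorem \ref{thm:flow} is carried out inside this open set, so $\mathscr{O}$ can be taken to consist of fiber-bunched maps. The main (but non-obvious) obstacle is in fact absorbed into Theorem \ref{thm:flow} itself; the only real content to verify at this step is the bookkeeping identification between the Bowen--Ruelle Gibbs product structure and the flow-box local product structure of Definition \ref{def:lpsf}, which is standard once one sets up coordinates in a tubular neighborhood.
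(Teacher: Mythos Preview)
Your proposal is correct and follows essentially the same approach as the paper: the corollary is stated immediately after a one-sentence observation that, by Bowen--Ruelle~\cite{BR75}, a $C^{1+\alpha}$ uniformly hyperbolic flow is (semi)conjugate to a suspension over a subshift of finite type with H\"older roof function and every equilibrium state for an H\"older potential has local product structure, so Theorem~\ref{thm:flow} applies directly. You simply expand on this by spelling out the hyperbolicity of $\mu$ via the uniform splitting and Lemma~\ref{Pesin}, and by tracing how the Gibbs product structure on the symbolic base transports to the local product structure of Definition~\ref{def:lpsf}; the paper leaves both of these implicit.
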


With respect to Theorem~\ref{thm:flow} our assumptions allow us to deal with suspension flows of
nonuniformly hyperbolic maps as Benedicks-Carleson quadratic maps, Manneville-Pommeau transformations, 
H\'enon maps or Viana maps, and also with the geometric Lorenz and Rovella-like attractors. Thus,
the majority of H\"older continuous cocycles over the flows discussed above have at least one positive 
Lyapunov exponent.  Let us mention
that Fanaae~\cite{Fan} proved that an open and dense set of \emph{fiber-bunched} 
$sl(d,\mathbb K)$-cocycles over Lorenz flows have simple spectrum. Although this result should probably extend
to arbitrary suspension flows with hyperbolic  countable Markov structure, the fiber-bunching assumption 
is crucial in the argument. 

In comparison  one obtains (via perturbative techniques on the space of infinitesimal generators) the existence 
of at least one positive Lyapunov exponent  in the sympectic case $\mathfrak{sp}(2\ell,\mathbb K)$ under 
weak assumptions,  and our theorems are stated with respect to open
and dense set of infinitesimal generators while in \cite{Fan} the author uses a stronger topology on the space of
linear differential systems with a strong domination condition. In particular, our results generalize the 
ones of \cite{Fan}  for H\"older cocycles with values in $sl(2,\mathbb K)=sp(2,\mathbb K)$ over the Lorenz flow.

\section{Preliminaries}\label{s.preliminaries}


\subsection{Hyperbolic and suspension flows}\label{sec:suspension}

In this subsection we recall some preliminaries on suspension (semi)flows
and discuss the local product structure for invariant measures. 

\subsubsection{Definition}

Assume that $M_0$ is a compact metric space and that $f: M_0 \to M_0$ is measurable. Given an 
$f$-invariant probability measure $\mu$ and a nonzero \emph{roof (or ceiling) function} $\varrho \colon M_0 \to [0, +\infty)$ 
satisfying $\varrho \in L^1(\mu)$ we define the \emph{suspension semiflow}  $(X^t)_{t\ge 0}$ over $f$ 
as given by  $X^t(x,s)=(x,s+t)$ acting on the space 
$$
M = \{ (x,t) \in M_0\times \mathbb R_0^+ :  0 \le t \le \varrho(x)\} / \sim,
$$
where $(x,\varrho(x))\sim (f(x),0)$.  In these coordinates $(X^t)_t$ coincides
with the flow consisting in the displacement along the ``vertical" direction.
If $f$ is invertible it is not difficult to check that $(X^t)_t$ defines indeed a flow, moreover $(X^t)_t$ preserves the probability measure 
$
\hat\mu=(\mu \times \Leb) / \int \varrho \, d\mu.
$
Furthermore, observe that $\hat\mu$ is uniquely defined by the previous expression as long 
as the roof function $\varrho$ is bounded away from zero.

\subsubsection{Hyperbolic flows}

Let $M$ be a closed Riemannian
manifold and $X^t\colon M \to M$ a smooth flow. Let also
$\Lambda\subseteq M$ be a compact and $X^t$-invariant set. We say that
a flow $X^t\colon \La \to \La$
is  \emph{uniformly hyperbolic} if there exists a $DX^t$-invariant
and continuous splitting $T_\La N= E^-\oplus X \oplus E^+$ and
constants $C>0$ and $0<\theta_1<1$ such that
$$
\|DX^t \mid E^-\| \leq C \theta_1^t
    \quad\text{and}\quad
\|(DX^t)^{-1} \mid E^+\| \leq C \theta_1^t,
    \; \forall t \geq 0
$$
for every $x \in M$. Uniformly hyperbolic flows have been well
studied since the 1970's and, in particular, their geometric
structure is very well understood. It follows from the work
of Bowen, Sinai and Ruelle~\cite{BR75,Bow,Si68} that hyperbolic flows admit finite
Markov partitions and that are semi-conjugated to suspension
flows over a hyperbolic map: 
there exists a subshift of finite type $T:\Si \to \Si$, an
H\"older continuous roof function $\varrho: \Si \to \mathbb
R$ and an H\"older continuous surjective transformation $p: \La(T,\tau) \to M$
such that $(X^t)_t$ is semi-conjugated to the suspension flow
$G^t:\La(T,\tau) \to \La(T,\tau)$ given by $G^t(x,s)=(x,t+s)$, where
$
  \La(T,r)=\{(x,t): x \in \Si, \; 0\leq t \le \varrho(x)\}/\sim,
$
and $\sim$ is an equivalence relation that identifies the pairs
$(x,\varrho(x))$ and $(T(x),0)$. In fact, $X^t \circ p = p \circ G^t$ for every $t$,
the cubes $C_i=p((x,t): x\in [0;i], 0\leq t \leq \varrho(x))$ are proper sets, and 
$p$ is injective in a residual subset with full measure with respect to any 
open invariant probability measure.
We say that the flow $(X^t)_t$ exhibits a Markov partition
$\mathscr{R}$ whose rectangles are given by $R_i=p([0;i]\times\{\vec0\})$.
We refer the reader to Chernov's expository paper~\cite{Chernov} 
for a detailed explanation of the theory.

In addition, it follows from the thermodynamical formalism for
hyperbolic flows established in the mid 1970's (see ~\cite{BR75})
that there exists a unique equilibrium state $\mu_\varphi$ with
respect to any fixed H\"older continuous potential $\varphi:\La \to
\mathbb R$. Moreover, $\mu_\varphi$ is obtained as a suspension of a
$T$-invariant measure $\mu_T$ with the usual local product
structure (see e.g. Definition 2.1 in \cite{BeVar}).

\subsection{Regularity and Lyapunov exponents of induced cocycles}\label{s.Lyap}

In this subsection we describe the topology in the class of Hamiltonian linear differential systems 
 and recall both the regularity and the Lyapunov spectrum of the time-$t$ map 
 associated to the solution of the Hamiltonian linear system.
We endow $C^{r,\nu}(M, \mathfrak{sp} (2\ell,\mathbb K))$ with the
$C^{r,\nu}$-topology defined using the norm 
$$
\|H\|_{r,\nu}
	=\underset{0\leq j \leq r}{\sup} \; \underset{x\in M}{\sup} \|D^j H(x)\|
	+\underset{x\not=y}{\sup}\frac{\|H(x)-H(y)\|}{\|x-y\|^\nu},$$
where $H\in C^{r,\nu}(M, \mathfrak{sp} (2\ell,\mathbb K))$ and $x,y\in M$. Let us also mention that for the proofs it is enough to consider the case
when $\nu=1$ (i.e. $H$ Lipschitz). In fact,
if $H$ is $\nu$-H\"older continuous with respect to the metric
$d(\cdot,\cdot)$ then it is Lipschitz with respect to the metric
$d(\cdot,\cdot)^\nu$. Hence, up to a change of metric we may assume
that $H$ is Lipschitz and we will do so throughout the paper.

Our starting point is to obtain that the cocycle $\Phi^t_H(x)$ is also Lipschitz continuous.

\begin{lemma}\label{lem:timetau}
Given any $t\in\mathbb R$ and $H\in C^{r,\nu}(M, \mathfrak{sp} (2\ell,\mathbb K))$ there exists $C_1=C_1(t,H)>0$ such that, for all $y,z\in M$, we have 
$\|\Phi_H^t(y) - \Phi_H^t(z)\| \leq C_1\, d(y,z)$.
\end{lemma}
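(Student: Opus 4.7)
My plan is to reduce the estimate to continuous dependence on initial data for the linear ODE \eqref{lve}. Set $\Delta_t := \Phi_H^t(y) - \Phi_H^t(z)$. Differentiating and using the defining equation for each of $\Phi_H^t(y)$ and $\Phi_H^t(z)$, I obtain the inhomogeneous linear equation
$$
\partial_t \Delta_t = H(X^t(y))\,\Delta_t + \bigl[H(X^t(y))-H(X^t(z))\bigr]\Phi_H^t(z), \qquad \Delta_0=0.
$$
The first term is the linear part driven by the (uniformly bounded, by compactness of $M$) generator $H(X^t(y))$, while the second is a source term whose size can be controlled because (i) $H$ is Lipschitz, by the reduction in the paragraph preceding the lemma, and (ii) the base flow $X^t$ is $C^{1+\alpha}$, so for each fixed $t$ there exists $L_X(t)>0$ with $d(X^s(y),X^s(z))\leq L_X(t)\,d(y,z)$ for all $s\in [0,t]$.

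Next I would apply the variation-of-constants formula (or, equivalently, Gr\"onwall's inequality) together with the elementary bound $\|\Phi_H^s(\cdot)\|\leq e^{|s|\,\|H\|_\infty}$ that follows from $\|\partial_s \Phi_H^s\|\leq \|H\|_\infty\|\Phi_H^s\|$. Denoting by $L_H$ the Lipschitz constant of $H$, this yields
$$
\|\Delta_t\| \leq \int_0^t \|H\|_\infty \, \|\Delta_s\|\, ds \;+\; t\, L_H\, L_X(t)\, e^{t\|H\|_\infty}\, d(y,z),
$$
and Gr\"onwall's lemma then produces a constant $C_1=C_1(t,H)>0$ with $\|\Delta_t\|\leq C_1\,d(y,z)$, as claimed. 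The case $t<0$ is handled in the same way by reversing time and using the identity $\Phi_H^{-t}(X^t(x))=(\Phi_H^t(x))^{-1}$ recorded in Section~\ref{discrete-time}, or directly by running Gr\"onwall backwards.

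There is no serious obstacle here: the statement is essentially the standard smooth-dependence-on-parameters theorem for linear ODE with Lipschitz-in-parameter generator. The only thing to keep track of is that one needs Lipschitz regularity both of the fiber data $H$ (given by the standing assumption after the change of metric $d\mapsto d^\nu$) and of the base data $s\mapsto X^s(\cdot)$, which is provided by the hypothesis that $X^t$ is $C^{1+\alpha}$ on the compact manifold $M$.
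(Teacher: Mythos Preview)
Your proposal is correct and follows essentially the same route as the paper: both set up the integral (equivalently, differential) identity for $\Phi_H^t(y)-\Phi_H^t(z)$, bound the inhomogeneous term using the Lipschitz property of $H$ together with the a priori estimate $\|\Phi_H^s\|\le e^{|s|\|H\|_\infty}$, and then close with Gr\"onwall. Your explicit use of the flow's Lipschitz constant $L_X(t)$ to control $d(X^s(y),X^s(z))$ is in fact tidier than the paper's corresponding step.
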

 
\begin{proof}
Assume for simplicity that $t>0$ the computation for a negative iterate $t$ is analogous. Since $\Phi^t_H(x)$ is a solution of the differential equation $\partial_t{u}(t)=H(X^t(x)) \cdot u(t)$ we obtain that 
$$\Phi_H^t(x)v=v+\int_0^t H(X^s(x)) \Phi_H^s(x)v \,ds,$$ for all $t$. Therefore, it follows from Gronwall's inequality (see e.g. \cite{PdM}) that $\|\Phi_H^t(x)v\| \le  e^{\|H\| |t|}\|v\|$ for all $t\in\mathbb R$ and $v\in\mathbb R^{2\ell}$. Moreover, since $H$ is Lipschitz there exists $K>0$ so that

\begin{align*}
\|\Phi_H^t(y)v - \Phi_H^t(z)v  \|
	& \leq  \int_0^t \|H(X^s(y))-H(X^s(z))\| \|\Phi_H^s(y) v \| + \|H\| \| \Phi_H^s(y) v- \Phi_H^s(z)v\|\, ds\\
	& \leq    e^{|t| \|H\| } \|v\| K  \int_0^t e^{-\tau s} d(y,z) \,ds + \int_0^t \|H\| \| \Phi_H^s(y) v - \Phi_H^s(z) v \|\, ds\\
	& \leq   e^{|t| \|H\| } \|v\| K \tau^{-1}  d(y,z) + \int_0^t \|H\| \| \Phi_H^s(y) v - \Phi_H^s(z) v \|\, ds
\end{align*}
Applying again Gronwall's lemma it follows that  $\|\Phi_H^t(y)v - \Phi_H^T(z)v  \|  \leq e^{2 |t| \|H\| } \|v\| K  \,d(y,z)$
and, consequently, we obtain that for all $y,z\in M$
$
\|\Phi_H^t(y) - \Phi_H^t(z)  \|  \leq e^{2 t \|H\| } K  \,d(y,z),
$
which proves the lemma.
\end{proof}

The next result relates the Lyapunov spectrum of cocycles with the induced one.

\begin{lemma}\label{l.Lyapunov.spectrum.section}
Given a measurable map $f: M \to M$, an $f$-invariant, ergodic probability measure 
$\mu$  and a roof function 
$\varrho \colon M \to [0, +\infty)$ with $\varrho \in L^1(\mu)$ let $X^t$ be the suspension semiflow
 and  $\hat \mu=(\mu \times \Leb) / \int \varrho \, d\mu$ be an invariant probability measure.
Given $H\in C^{r,\nu}(M,\mathfrak{sp}(2\ell,\mathbb K))$ the cocycle $(\Phi_H^t,\hat \mu)$ has a 
non-zero Lyapunov exponent if and only if the same property holds for the cocycle 
$(\Psi_H,\mu)$ over $f$ given by 
$\Psi_H(x)=\Phi_H^{\varrho(x)}(x)$.
\end{lemma}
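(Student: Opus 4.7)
The plan is to identify the induced cocycle with a time-change of the continuous-time cocycle and then transfer the Lyapunov spectrum through a positive scalar factor $\bar\varrho:=\int\varrho\,d\mu>0$, which is positive since $\varrho$ is non-negative and not identically zero.

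First I would verify that both cocycles satisfy the integrability hypothesis of the Oseledets theorem. The norm estimate $\|\Phi_H^{\pm t}(x)\|\leq e^{\|H\|\,|t|}$ obtained via Gronwall's lemma in the proof of Lemma~\ref{lem:timetau} gives uniform bounds on $\log\|\Phi_H^{\pm 1}\|$, so integrability with respect to $\hat\mu$ is automatic. For the induced cocycle, the same estimate yields $\log^+\|\Psi_H^{\pm 1}(x)\|\leq \|H\|\,\varrho(x)$, which lies in $L^1(\mu)$ by assumption.

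Next, I would exploit the cocycle identity $\Phi_H^{t+s}(x)=\Phi_H^s(X^t(x))\circ\Phi_H^t(x)$ together with the suspension identification $X^{\varrho(x)}(x,0)=(f(x),0)$ and induct on $n$ to obtain
$$\Psi_H^n(x)\;=\;\Phi_H^{S_n\varrho(x)}(x,0), \qquad S_n\varrho(x):=\sum_{k=0}^{n-1}\varrho(f^k(x)).$$
Birkhoff's ergodic theorem applied to the $f$-invariant ergodic measure $\mu$ then yields $S_n\varrho(x)/n\to\bar\varrho$ for $\mu$-a.e.~$x$. Hence, for any nonzero $v\in\mathbb{R}^{2\ell}$,
$$\lim_{n\to\infty}\tfrac{1}{n}\log\|\Psi_H^n(x)v\|\;=\;\bar\varrho\cdot\lim_{n\to\infty}\tfrac{1}{S_n\varrho(x)}\log\|\Phi_H^{S_n\varrho(x)}(x,0)v\|\;=\;\bar\varrho\,\lambda_\Phi(x,0;v)$$
whenever the rightmost limit exists. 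Since $\bar\varrho>0$, a nonzero Lyapunov exponent on either side corresponds precisely to a nonzero exponent on the other side.

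The main technical subtlety is that the base $M\times\{0\}$ is a $\hat\mu$-null set, so the identity above concerns, a priori, only null-many points for the continuous-time system. This is handled by ergodicity of $\hat\mu$ (inherited from $\mu$), which makes the Lyapunov spectrum constant $\hat\mu$-almost everywhere, combined with the $X^t$-invariance of Lyapunov exponents along orbits and the product form $\hat\mu=(\mu\times\Leb)/\bar\varrho$: a routine Fubini-type argument then shows that this constant spectrum is realized at $(x,0)$ for $\mu$-a.e.~$x$, which closes the equivalence.
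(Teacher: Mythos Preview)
Your proposal is correct and follows essentially the same route as the paper: both arguments reduce to the identity $\Psi_H^n(x)=\Phi_H^{S_n\varrho(x)}(x,0)$, apply Birkhoff to $S_n\varrho/n$, and conclude that the two Lyapunov spectra differ by the positive scalar $\bar\varrho$. The only cosmetic differences are that the paper works directly with the top and bottom exponents via $\frac1t\log\|\Phi_H^t(x)\|$ and $\frac1t\log\|\Phi_H^t(x)^{-1}\|^{-1}$ rather than fixing a vector $v$, and it resolves the null-section issue by starting from a $\hat\mu$-generic point $z$ and flowing it to the section (so that orbit-invariance of exponents transfers regularity to $z_0$) instead of your Fubini argument; both devices accomplish the same thing.
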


\begin{proof}
Since $\mu$ is ergodic, then the $X^t$-invariant probability measure $\hat\mu$ is also ergodic and the largest
Lyapunov exponent $\la^+(\Phi_H^t,\hat\mu)$ for the time-continuous cocycle $\Phi_H^t $ is given by
$
\la^+(\Phi_H^t,\hat \mu)
    =\lim_{t\to +\infty} \frac1t \log \|\Phi_H^t(x)\|
$
for $\hat\mu$-a.e. $x$. 
Moreover, by construction of $\hat\mu$, it follows from the Birkhoff ergodic theorem and ergodicity of $\mu$ that
$
\lim_{n\to +\infty} \frac{\varrho^{(n)}(z_0)}n= \int \varrho \; d\mu
$ 
and 
$
\la^+(\Psi_H,\mu)
     =\lim_{k \to +\infty} \frac1k \log \|\Psi_H^k (z_0)\| 
$
for $\mu$-almost every $z_0\in M\times \{0\}$, where 
\begin{equation}\label{inducing}
\varrho^{(n)}(x)=\sum_{0 \le j \le n-1} \varrho(f^j(x))).
\end{equation}
In particular, since $\varrho(z)$ denotes the first Poincar\'e hitting 
time of a point $z \in \hat M$ to the global section $M \times \{0\}$ then for $\mu$-almost every $z\in \hat M$
one has that $z_0=X^{\varrho(z)}(z)$ satisfies 
\begin{align*}
\la^+(\Phi_H^t,\mu)
    & =\lim_{t\to +\infty} \frac1t \log \|\Phi_H^{t-\varrho(z)}(X^{\varrho(z)}(z)) \; \Phi_H^{\varrho(z)} (z)\| = \lim_{n\to +\infty} \frac1{\varrho^{(n)}(z_0)} \log \|\Phi_H^{\varrho^{(n)}(z_0)}(z_0)\|. \\
    & =  \Big( \lim_{n\to +\infty} \frac{n}{\varrho^{(n)}(z_0)} \Big)
         \Big( \lim_{n\to +\infty} \frac1n \log \|\Psi_H^n(z_0)\| \Big)= \Big( \int \varrho \; d\mu \Big)^{-1} \la^+(\Psi_H, \mu),
\end{align*}
where $\la^+(\Psi_H,\mu)$ denotes the maximum Lyapunov exponent of the
cocycle $\Psi_H$ with respect to $\mu$. In particular the largest Lyapunov exponent
of $\Phi_H^t$ with respect to $\mu$ is zero if and only the same holds for $\Psi_H$ with 
respect to $\mu$. Similar computations prove the same for the lowest Lyapunov exponent
$
\la^-(\Phi_H^t,\mu)
    =\lim_{t\to +\infty} \frac1t \log \|\Phi_H^t(x)^{-1}\|^{-1}
    \quad \text{ for $\hat\mu$-a.e. $x$. }
$
This finishes the proof of the lemma.
\end{proof}

\section{Hamiltonian linear differential systems over suspension flows}\label{sec:time.continuous.suspension}

In this section we prove Theorem~\ref{thm:flow} in the case of symplectic cocycles over 
nonuniformly hyperbolic suspension flows. Since the general case is more involving but uses
some of these arguments, this intermediate step from discrete-time to suspension flows will be useful 
to the reader.
As a consequence we will deduce Corollary~\ref{cor:hypflow} on 
symplectic cocycles over nonuniformly hyperbolic flows.  
The strategy to deal with Hamiltonian linear differential systems over  suspension flows is to make a reduction 
to the discrete-time setting, in which case we consider an induced cocycle in the fiber that also depends on the 
roof function. It is here that we need to require the roof function to be bounded (recall Lemma~\ref{l.Lyapunov.spectrum.section}). 
Moreover, an extra difficulty is caused since our perturbations are in the space of Hamiltonian linear differential system $H \in C^{r,\nu}(M, \mathfrak{sp}(2\ell,\mathbb K))$ as the infinitesimal generators 
of the fundamental solutions $\Phi_H^t$ over the
flow $(X^t)_t$. One of the main difficulties is really to analyze the variation of the holonomies for the cocycle
$\Psi_H$ as a function of the infinitesimal generators $H$.

So, given a compact Riemannian manifold $M_0$,  a $C^{1+\alpha}$ diffeomorphism $f: M_0 \to M_0$ endowed with an invariant and ergodic hyperbolic measure $\mu$ with local product structure and $\varrho \colon M_0 \to [0, +\infty)$  a H\"older continuous roof function which is bounded away from zero (without loss of generality assume the height is larger than  one), we consider the corresponding suspension flow $(X^t)_{t\ge 0}$ over $f$ acting on 
the space 
$
M = \{ (x,t) \in M_0\times \mathbb R_0^+ :  0 \le t \le \varrho(x)\} / \sim,
$
where $(x,\varrho(x))\sim (f(x),0)$, was described before in \S\ref{sec:suspension}.  It is clear from the definition that the $(X^t)_t$-invariant probability measure $\hat\mu=(\mu \times \Leb) / \int \varrho\, d\mu$ has the local product structure.

\subsection{A reduction to the base dynamics}\label{sec:reduction}

Our strategy to deal with suspension flows is to make a reduction of the dynamics, cocycle and invariant
measures by an inducing process.

\subsubsection{A cocycle reduction to the base dynamics}\label{sec:cocycle.reduction}

Our approach here is to use a reduction of the time-continuous Hamiltonian to the
case of discrete-time setting. For that purpose consider the cocycle 
$\Psi_H: \Sigma \times \mathbb K^{2\ell} \to \Sigma \times \mathbb K^{2\ell}$ induced from
$\Phi_H^t$ on the global cross-section $\Sigma=M_0\times\{0\}$ by
$
\Psi_H(x,v)=(f(x), \Phi_H^{\varrho(x)}(x) \, v).
$
Given $n \ge 1$ set 
$\Psi^n_H(x)=\Psi_H(f^{n-1}(x))\circ\dots\circ\Psi_H(f(x))\circ\Psi_H(x),$
and notice that 
$
\Psi^n_H(x)=\Phi^{\varrho^{(n)}(x)}_H(x),
$
where $\varrho^{(n)}$ is an inducing o
the original roof function and defined in (\ref{inducing}).
For simplicity reasons, we shall assume that the roof function $\varrho$ is Lipschitz and 
our first step, which is a generalization of Lemma~\ref{lem:timetau}, is to obtain the Lipschitz regularity for the 
induced cocycle. 

\begin{lemma}\label{Lips}
The induced cocycle $\Psi_H$ is Lipschitz continuous.
\end{lemma}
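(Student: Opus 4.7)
The plan is to bound $\|\Psi_H(y) - \Psi_H(z)\| = \|\Phi_H^{\varrho(y)}(y) - \Phi_H^{\varrho(z)}(z)\|$ by splitting the difference in a familiar ``time vs.\ basepoint'' manner and then invoking the hypotheses that both $H$ and $\varrho$ are Lipschitz and that $\varrho$ is bounded (which is used essentially).

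The first step would be the decomposition
\[
\Phi_H^{\varrho(y)}(y) - \Phi_H^{\varrho(z)}(z)
  = \bigl[\Phi_H^{\varrho(y)}(y) - \Phi_H^{\varrho(y)}(z)\bigr]
   + \bigl[\Phi_H^{\varrho(y)}(z) - \Phi_H^{\varrho(z)}(z)\bigr].
\]
For the first bracket, since $\varrho$ is bounded above by some $\varrho_{\max}$, Lemma~\ref{lem:timetau} applied at the fixed time $t=\varrho(y)\le \varrho_{\max}$ gives a Lipschitz bound $\|\Phi_H^{\varrho(y)}(y) - \Phi_H^{\varrho(y)}(z)\|\le C_1\, d(y,z)$, where the constant $C_1=C_1(\varrho_{\max},H)$ is uniform because the constant produced in the proof of Lemma~\ref{lem:timetau} (of the form $e^{2t\|H\|}K$) is monotone increasing in $t$. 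The point to notice is that the Lipschitz dependence on the basepoint is uniform over the relevant time interval precisely because $\varrho$ is bounded.

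For the second bracket, which is the only new ingredient, the idea is to use the defining ODE $\partial_t \Phi_H^t(z) = H(X^t(z)) \Phi_H^t(z)$ together with the uniform a priori bound $\|\Phi_H^t(z)\|\le e^{\|H\|\, t}$ established inside the proof of Lemma~\ref{lem:timetau}. Integrating from $\varrho(z)$ to $\varrho(y)$ yields
\[
\bigl\|\Phi_H^{\varrho(y)}(z) - \Phi_H^{\varrho(z)}(z)\bigr\|
   \;\le\; \|H\|\, e^{\|H\|\, \varrho_{\max}}\, |\varrho(y)-\varrho(z)|
   \;\le\; \|H\|\, e^{\|H\|\, \varrho_{\max}}\, \mathrm{Lip}(\varrho)\, d(y,z),
\]
where we used the Lipschitz continuity of the roof function. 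Adding the two bounds produces a Lipschitz constant $C_2 = C_1 + \|H\|\, e^{\|H\|\, \varrho_{\max}}\, \mathrm{Lip}(\varrho)$ for $\Psi_H$.

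I do not anticipate a serious obstacle here: the only thing one must be careful about is that both the spatial Lipschitz constant of $\Phi_H^t$ and the a priori norm bound on $\Phi_H^t$ grow at most exponentially in $t$, so boundedness of $\varrho$ is needed (and available) to absorb this growth into a single constant. Once the roof function is assumed Lipschitz and bounded, the argument is a routine combination of Lemma~\ref{lem:timetau} and a mean-value/integration estimate along the time direction.
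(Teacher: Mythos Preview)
Your proof is correct and follows essentially the same approach as the paper: the same ``time vs.\ basepoint'' decomposition, Lemma~\ref{lem:timetau} for the spatial term, and a time-direction estimate using the ODE and Lipschitzness of $\varrho$ for the second term. The only cosmetic difference is that the paper factors the second term via the cocycle identity before bounding $\|\mathrm{id}-\Phi_H^{\varrho(y)-\varrho(x)}\|$, whereas you integrate the ODE directly; the resulting constants are the same in spirit.
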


\begin{proof}
Since we assume that $H$ is Lipschitz, it follows from Lemma~\ref{lem:timetau} 
that the time-$t$ cocycle $\Phi_H^t$ is Lipschitz continuous for every $t\ge 0$. Hence, it follows that
$$
\|\Psi_H(x)-\Psi_H(y)\|
    \leq \|\Phi^{\varrho(x)}_H(x)-\Phi^{\varrho(x)}_H(y)\|
        + \|\Phi^{\varrho(x)}_H(y)-\Phi^{\varrho(y)}_H(y)\|.
$$
On the one hand, since $\varrho$ is continuous and $M_0$ is compact then $\varrho$ is bounded from above by the 
constant $\varrho_1$, the first term in the right hand side is bounded by $K( \varrho_1) d(x,y)$, for some $K(\varrho_1)>0$. 
On the other hand, since $\Phi_H^0=id$ and the roof function $\varrho$ is Lipschitz, then the 
rightmost term above is bounded by
$\|\Phi^{\varrho(x)}_H(y)\|\; \|\Phi_H^0(y)-\Phi^{\varrho(y)-\varrho(x)}_H(y)\| 
    \le	e^{K \varrho_1} C |\varrho(y)-\varrho(x)|
    \le C' d(x,y)$
for some positive constants $C, C'$. This proves the lemma.
\end{proof}

In the remaining of this section we assume that $H\in C^{r,\nu}(M,\mathfrak{sp}({2\ell,\mathbb K}))$ is such that all the Lyapunov exponents for $\Phi_H^t$ with respect to $\hat \mu$ are equal to zero.  It follows from 
Lemma~\ref{l.Lyapunov.spectrum.section} that the discrete-time cocycle $\Psi_H$ over $(f,\mu)$ has only zero
Lyapunov exponents. So, there exist constants $K, \tau, N, \theta$ 
such that $3\theta<\tau$ and the holonomy block $\cO=\mathcal H(K, \tau) \cap \mathscr{D}_{\Psi_H}(N,\theta)
\subset \Sigma$, for the cocycle $\Psi_H$, has positive $\mu$-measure.
In consequence we obtain from \cite[Proposition 4.2]{BeVar} the existence of unstable
holonomies. More precisely,

\begin{corollary}\label{holonomiaL}
For every $x\in \cO$ and $y\in W_{\text{loc}}^u(x)\subset \Sigma$, there exists $C_1>0$ and a symplectic linear
transformation $L^u_{H,x,y}:\{x\} \times P \mathbb K^{2\ell} \to \{y\} \times P \mathbb K^{2\ell}$ 
such that:
\begin{enumerate}
\item $L^u_{H,x,x}=id$ and $L^u_{H,x,z}=L^u_{H,y,z}\circ L^u_{H,x,y}$;
\item $\Psi_H(f^{-1}(y)) \circ L^u_{H,f^{-1}(x),f^{-1}(y)} \circ \Psi_H(x)^{-1}= L^u_{H,x,y}$;
\item $\|L^u_{H,x,y}-id\|\le C_1 d(x,y)$ and
\item $L^u_{H,f^j(y), f^j(z)}=\Psi_H^j(z) \circ L^u_{H,y,z}\circ  \Psi_H^j(y)^{-1}$ for all $j\in \mathbb Z$
\end{enumerate}
for every $x,y,z$ in the same local unstable manifold.
\end{corollary}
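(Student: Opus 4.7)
The plan is to deduce the result as a direct application of \cite[Proposition 4.2]{BeVar} to the induced discrete-time cocycle $\Psi_H$ on the cross-section $\Sigma$. That proposition constructs unstable holonomies with exactly the four listed properties for any Lipschitz symplectic cocycle over $(f,\mu)$ that has only vanishing Lyapunov exponents and that satisfies a fiber-bunching condition $3\theta<\tau$ on a positive-measure block. Hence the work reduces to verifying these hypotheses for $\Psi_H$.

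First, I would check the regularity hypothesis. The map $\Psi_H(x)=\Phi_H^{\varrho(x)}(x)$ lies in $sp(2\ell,\mathbb R)$ for each $x$ because $\Phi_H^t\in sp(2\ell,\mathbb R)$ for every $t\in\mathbb R$, and it is Lipschitz continuous in $x$ by Lemma \ref{Lips}, which in turn relies on Lemma \ref{lem:timetau} together with the standing Lipschitz regularity of $\varrho$. The vanishing of the Lyapunov exponents of $\Psi_H$ then follows from the hypothesis that $\Phi_H^t$ has only zero Lyapunov exponents with respect to $\hat\mu$ via Lemma \ref{l.Lyapunov.spectrum.section}, combined with the symmetry of the symplectic spectrum about zero. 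Finally I would choose constants $K,\tau,N,\theta$ with $3\theta<\tau$ so that the holonomy block $\cO=\mathcal H(K,\tau)\cap \mathscr D_{\Psi_H}(N,\theta)$ has positive $\mu$-measure: the hyperbolic block $\mathcal H(K,\tau)$ has positive measure by Pesin theory, and $\theta$ can be taken arbitrarily small since the subexponential growth of $\|\Psi_H^{\pm n}\|$ holds $\mu$-a.e.\ by Oseledets's theorem once all Lyapunov exponents vanish, so Egorov's theorem provides a positive-measure set on which this growth is uniformly $\theta$-bounded.

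With the hypotheses verified, \cite[Proposition 4.2]{BeVar} applied to $\Psi_H$ on $\cO$ directly produces the symplectic maps $L^u_{H,x,y}$ and all four listed properties. The construction there is a telescoping limit along the exponentially converging backward orbits of $x$ and $y\in W^u_{\mathrm{loc}}(x)\cap\cO$, where the fiber-bunching $3\theta<\tau$ forces geometric convergence at a rate comparable to $d(x,y)$; this yields property (3), while (1), (2) and (4) are immediate from the definition or from equivariance under a single cocycle iterate. The main conceptual obstacle in the context of the corollary is not the existence of the holonomies itself, since this is essentially free from \cite{BeVar}, but rather the analysis of how $L^u_{H,x,y}$ varies with the infinitesimal generator $H$ (and not just with the discrete cocycle $\Psi_H$), which is the content of later sections and does not enter the present statement.
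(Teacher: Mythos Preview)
Your proposal is correct and follows essentially the same route as the paper: the corollary is stated as an immediate consequence of \cite[Proposition~4.2]{BeVar} once one has verified that $\Psi_H$ is Lipschitz (Lemma~\ref{Lips}), that it has only zero Lyapunov exponents (Lemma~\ref{l.Lyapunov.spectrum.section}), and that the holonomy block $\cO=\mathcal H(K,\tau)\cap\mathscr D_{\Psi_H}(N,\theta)$ with $3\theta<\tau$ has positive $\mu$-measure. Your additional remarks on the telescoping construction and on the later role of the dependence on $H$ are accurate but go slightly beyond what the paper records here.
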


Notice the Poincar\'e return map $f$ on the global cross-section $\Sigma$ preserves $\mu$ with the local
product structure property. For any positive measure holonomy block $\cO$ and for all  $x \in \supp(\mu\mid \cO)$ 
let $\cN_{x}(\cO,\delta)$,   $\cN^u_{x}(\cO,\delta)$ and $\cN^s_{x}(\cO,\delta)$  be the induced neighborhoods 
of $x$ in $\mathcal O$ with  local product structure defined similarly as before.
Moreover, by some abuse of notation, when no confusion is possible on the Hamiltonian $H$ we shall denote 
by $h^s_{x,y}$ and $h^u_{x,y}$ the projectivization of the stable and unstable holonomies, respectively.

\subsubsection{Invariant measures reduction to the base dynamics}\label{sec:measure.reduction}

Now we relate invariant measures for the original and induced cocycles. 
Let $(\varphi_H^t)_t$ denote the cocycle over $(X^t)_t$ projectivized from $(\Phi_H^t)_t$
and we will also denote by $\psi_H$  the projectivized cocycle obtained from $\Psi_H$. 

\begin{lemma}\label{lem:reduction1}
Let $m$ be a $(\varphi_H^t)_t$-invariant probability measure such that $\Pi_*m=\hat\mu$ and
$m=\int_M m_z\; d\hat\mu(z)$ be a disintegration of $m$.  Then $(\varphi_H^t(z))_*m_z=m_{X^t(z)}$
for $\mu$-almost every $z\in\Sigma$ and all $0\le t\le \varrho(z)$.
\end{lemma}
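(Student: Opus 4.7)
The plan is to exploit the essential uniqueness of the Rokhlin disintegration together with the fact that the lifted flow $\varphi_H^t$ sends the fiber $\{z\}\times P\mathbb{K}^{2\ell}$ onto the fiber over $X^t(z)$ via the linear map $\varphi_H^t(z)$. First I would observe that, since $m$ is $\varphi_H^t$-invariant and $X^t$ preserves $\hat\mu$, pushing the given disintegration forward by $\varphi_H^t$ produces another disintegration of $m$ whose fiber at $w$ equals $(\varphi_H^t(X^{-t}(w)))_* m_{X^{-t}(w)}$. Essential uniqueness of the Rokhlin disintegration then yields, for each fixed $t$, the identity $m_{X^t(z)}=(\varphi_H^t(z))_* m_z$ for $\hat\mu$-a.e.~$z\in M$.

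The move from this ``for each fixed $t$'' statement to the asserted ``for $\mu$-a.e.~$z\in\Sigma$ and every $t\in[0,\varrho(z)]$'' is the main obstacle, because $\Sigma$ is $\hat\mu$-null and a naive Fubini argument only gives Lebesgue-a.e.~$t$. To bridge this gap I would pass through the Poincar\'e section. The $\varphi_H^t$-invariance of $m$ together with $\Pi_* m=\hat\mu$ forces $m$ to be the suspension of a unique $\Psi_H$-invariant measure $\tilde\mu$ on $\Sigma\times P\mathbb{K}^{2\ell}$ over $\mu$, characterized by
$$\int \phi\,dm=\frac{1}{\int\varrho\,d\mu}\int_{\Sigma}\int_0^{\varrho(z)}\phi(\varphi_H^t(z,v))\,dt\,d\tilde\mu(z,v)$$
for every bounded measurable $\phi$; this is the analogue for cocycles of the standard correspondence between flow-invariant and Poincar\'e-invariant measures. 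By Rokhlin one disintegrates $\tilde\mu=\int_{\Sigma}\tilde\mu_z\,d\mu(z)$ over $\mu$.

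Given this data, I would explicitly \emph{define} a candidate disintegration $\{\hat m_w\}_{w\in M}$ of $m$ by the equivariant rule
$$\hat m_{X^t(z)}:=(\varphi_H^t(z))_* \tilde\mu_z, \qquad z\in\Sigma,\ 0\le t<\varrho(z).$$
The suspension formula above immediately gives $m=\int_M \hat m_w\,d\hat\mu(w)$, so $\{\hat m_w\}$ is a valid disintegration of $m$, and essential uniqueness forces $\hat m_w = m_w$ off a $\hat\mu$-null set. Replacing $\{m_w\}$ by this canonical version---the one truly determined by $\tilde\mu$, rather than an arbitrary Borel representative---the conclusion $(\varphi_H^t(z))_* m_z=m_{X^t(z)}$ holds for $\mu$-a.e.~$z\in\Sigma$ and \emph{every} $t\in[0,\varrho(z)]$ by construction. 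The only substantive work is verifying the bijective correspondence between $\varphi_H^t$-invariant measures on $M\times P\mathbb{K}^{2\ell}$ projecting to $\hat\mu$ and $\Psi_H$-invariant measures on $\Sigma\times P\mathbb{K}^{2\ell}$ projecting to $\mu$, along with the compatibility of the two disintegrations.
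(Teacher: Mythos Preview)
Your proposal is correct and follows essentially the same strategy as the paper: use Rokhlin uniqueness together with $\varphi_H^t$-invariance to get $(\varphi_H^t(z))_* m_z = m_{X^t(z)}$ for $\mu$-a.e.\ $z$ and Lebesgue-a.e.\ $t$, and then replace the arbitrary disintegration by an explicitly equivariant version along orbit segments so that the identity holds for \emph{every} $t\in[0,\varrho(z)]$. The one point of difference is in how the base values on the null set $\Sigma$ are produced: the paper simply redefines the disintegration along each fiber $\{z\}\times[0,\varrho(z))$ by $t\mapsto (\varphi_H^t(z))_* m_z$ and appeals to weak$^*$-continuity in $t$, whereas you first build the induced $\Psi_H$-invariant measure $\tilde\mu$ on $\Sigma\times P\mathbb K^{2\ell}$, disintegrate it over $\mu$, and propagate $\tilde\mu_z$ along the flow. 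Your route is slightly more elaborate but is cleaner about why the fiber measures are well defined $\mu$-a.e.\ on $\Sigma$, and it effectively anticipates the content of the lemma that follows immediately in the paper (the correspondence $m\leftrightarrow m_\Sigma$).
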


\begin{proof}
Notice that, by construction,  $\{(x,\varrho(x)):x\in M_0\} \subset M$ is a zero $\hat\mu$-measure set and, consequently, 
$
\hat \mu (\{ (x,t) \in M_0\times \mathbb R_0^+ :  0 \le t < \varrho(x)\} )=1.
$
Moreover, the family $(\{z\} \times [0,\varrho(z)))_{z\in \Sigma}$ defines a measurable partition of the previous set in the
sense of Rokhlin. The same holds for the partition into points of each segment of orbit $\{z\} \times [0,\varrho(z))$. 
Therefore, given any $(\varphi_H^t)_t$-invariant probability measure  $m$ such that $\Pi_*m=\hat\mu$ there
exists a $\hat \mu$-almost everywhere defined family of probability measures $(m_{X^t(z)})_{z\in\Sigma, t\in[0,\varrho(z))}$ 
such that $\supp(m_{X^t(z)})\subset \{X^t(z)\}\times  P \mathbb K^{2\ell}$ and $m=\int m_{X^t(z)}\; d\hat\mu$.
More precisely, using $\hat\mu = (\mu\times \Leb)/\int \varrho\, d\mu$ one can write 
$$
m(E)= \frac{1}{\int \varrho\; d\mu}\int \left[ \int_0^{\varrho(z)} m_{X^t(z)}(E) \;dt \, \right] d\mu(z)
$$
for all measurable sets $E\subset M\times   P \mathbb K^{2\ell}$.
By invariance of $m$, for all $t\in\mathbb R$ we get that $(\varphi_H^t)_* m=m$ and, since any two disintegrations of the same probability measure coincide almost everywhere we get $m_{X^t(z)}=
(\varphi_H^t(z))_*{m_z}$ for $\mu$-almost every $z\in\Sigma$ and Lebesgue almost every $t\in[0,\varrho(z))$.
Finally, just observe that one can consider on each fiber $\{z\}\times[0,\varrho(z))$ for the disintegration 
the elements given by $t\mapsto (\varphi_H^t(z))_*{m_z}$, that vary continuously with $t$ in the weak$^*$ topology.
\end{proof}

\subsection{Continuous disintegration and criterion for non-zero Lyapunov exponents}\label{sec:criteria.suspension}

Here we just collect some of the previous ingredients and prove that all zero Lyapunov exponents for the 
time-continuous cocycle $\varphi_H^t$ over $(X^t,\hat\mu)$ implies on a rigid condition on the disintegration of
$\psi_H$-invariant measures. Throughout, let $m$ be a a $(\varphi_H^t)_t$-invariant probability measure such that $\Pi_*m=\hat\mu$. 

\begin{lemma}
The measure $m$ is completely determined by a probability measure $m_\Sigma$ on $\Sigma\times  P\mathbb K^{2\ell}$
such that $(\psi_H)_*m_\Sigma=m_\Sigma$ and $\Pi_*m_\Sigma=\mu$.
\end{lemma}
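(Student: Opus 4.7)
The plan is to exhibit a canonical two-way correspondence between $(\varphi_H^t)_t$-invariant probability measures $m$ projecting to $\hat\mu$ and $\psi_H$-invariant probability measures $m_\Sigma$ on $\Sigma\times P\mathbb K^{2\ell}$ projecting to $\mu$, and to use it as the definition of ``determined by''. Concretely, starting from $m$, I will use the Rokhlin disintegration $m=\int m_z\,d\hat\mu(z)$ supplied by Lemma~\ref{lem:reduction1}, where $m_z$ sits on $\{z\}\times P\mathbb K^{2\ell}$ and the representatives $t\mapsto (\varphi_H^t(z))_*m_z$ can be taken to vary continuously in $t\in[0,\varrho(z)]$ in the weak$^\ast$ topology. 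I define
\[
m_\Sigma \;:=\; \int_\Sigma m_z\,d\mu(z),
\]
which is a probability measure on $\Sigma\times P\mathbb K^{2\ell}$ with $\Pi_\ast m_\Sigma=\mu$ by construction.

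The next step is to verify that $(\psi_H)_\ast m_\Sigma=m_\Sigma$. Since $\psi_H(z)=\Phi_H^{\varrho(z)}(z)$ and, under the identification $(z,\varrho(z))\sim(f(z),0)$, we have $X^{\varrho(z)}(z)=f(z)$, Lemma~\ref{lem:reduction1} (applied at $t=\varrho(z)$, where the continuous version is defined) gives $(\psi_H(z))_\ast m_z=m_{f(z)}$ for $\mu$-a.e.\ $z$. Then for any Borel set $A\subset\Sigma\times P\mathbb K^{2\ell}$,
\[
(\psi_H)_\ast m_\Sigma(A)=\int_\Sigma (\psi_H(z))_\ast m_z(A)\,d\mu(z)=\int_\Sigma m_{f(z)}(A)\,d\mu(z)=\int_\Sigma m_z(A)\,d\mu(z)=m_\Sigma(A),
\]
where the third equality uses $f_\ast\mu=\mu$.

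Conversely, given any $\psi_H$-invariant probability $m_\Sigma$ on $\Sigma\times P\mathbb K^{2\ell}$ with $\Pi_\ast m_\Sigma=\mu$, I disintegrate $m_\Sigma=\int_\Sigma m_z\,d\mu(z)$ and set
\[
m(E)\;:=\;\frac{1}{\int\varrho\,d\mu}\int_\Sigma\!\!\int_0^{\varrho(z)}\!(\varphi_H^t(z))_\ast m_z(E)\,dt\,d\mu(z)
\]
for every measurable $E\subset M\times P\mathbb K^{2\ell}$. This is a probability measure with $\Pi_\ast m=\hat\mu$ (by the very formula $\hat\mu=(\mu\times\Leb)/\int\varrho\,d\mu$) whose disintegration along the fibers of $\Pi$ is precisely $t\mapsto(\varphi_H^t(z))_\ast m_z$; the $(\varphi_H^s)_\ast$-invariance follows by splitting the integral over $t$ at the successive return times $\varrho^{(n)}(z)$ and using $(\psi_H(z))_\ast m_z=m_{f(z)}$ together with $f_\ast\mu=\mu$, exactly as in the suspension construction of flow-invariant measures from base-invariant ones. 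These two constructions are mutually inverse: plugging the $m_\Sigma$ obtained from $m$ back into the reconstruction formula gives $m$ again by the same disintegration identity used in the proof of Lemma~\ref{lem:reduction1}.

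The main obstacle I anticipate is not the algebra of the correspondence but the measurability and continuity bookkeeping for the disintegrations: one must argue that the continuous-in-$t$ version of the fibers $(\varphi_H^t(z))_\ast m_z$ supplied by Lemma~\ref{lem:reduction1} is legitimate at the boundary $t=\varrho(z)$ so that the identification $(z,\varrho(z))\sim(f(z),0)$ can be used pointwise, and that the reconstruction formula produces a Borel probability measure whose disintegration is the prescribed one. Once the continuity of $t\mapsto(\varphi_H^t(z))_\ast m_z$ is in hand, both directions become routine verifications.
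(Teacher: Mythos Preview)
Your proposal is correct and follows essentially the same approach as the paper: both define $m_\Sigma=\int_\Sigma m_z\,d\mu(z)$ from the disintegration supplied by Lemma~\ref{lem:reduction1}, use the weak$^*$ continuity of $t\mapsto(\varphi_H^t(z))_*m_z$ to pass to $t=\varrho(z)$ and obtain $(\psi_H(z))_*m_z=m_{f(z)}$, and then deduce $(\psi_H)_*m_\Sigma=m_\Sigma$ from $f_*\mu=\mu$. Your version is in fact slightly more explicit than the paper's in that you spell out the inverse reconstruction formula for $m$ from $m_\Sigma$, which the paper only alludes to; this makes the phrase ``completely determined'' more transparent, but the underlying argument is the same.
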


\begin{proof}
Lemma~\ref{lem:reduction1} implies that $m$ is completely determined by probability measures $(m_z)_{z\in \Sigma}$. 
Moreover, it is clear from the invariance condition that $m_\Sigma=\int m_z \; d\mu$ is $\psi_H$-invariant, because we have $(\varphi_H^t(z))_*m_z=m_{X^t(z)}$ for all $0\le t<\varrho(z)$ and continuity in the weak$^*$ topology.

On the one hand, since $\Pi_* m=\hat \mu$, for any measurable cylinder 
$E=E_1\times[0,b]$ contained in $\{ (z,t) : z\in M_0 \text{ and } 0\le t <\varrho(z) \}$ we get 
$$
(\Pi_* m)(E)=m(\pi^{-1}(E))=\hat \mu(E)  = \frac{1}{\int \varrho\; d\mu}\times \mu(E_1) \times \int_0^b dt.
$$

On the other hand, using the invariance condition $(\varphi_H^t(z))_*m_z=m_{X^t(z)}$ then one can write
$$
m=\int_M m_z \,d\hat \mu(z) 
	= \frac{1}{\int \varrho\; d\mu} \int_{M_0} \left[ \int_0^{\varrho(z)} m_{X^t(z)} \; dt \right]\; d\mu(z).
$$

Therefore for the set $E$ as above we get
$$
(\Pi_* m)(E)=m(\pi^{-1}(E))=\frac{1}{\int \varrho\; d\mu}\int_0^b m_\Sigma (\pi^{-1}(E)\cap\Sigma) dt= \frac{1}{\int \varrho\; d\mu}\Pi_*m_\Sigma(E) \times \int_0^b dt.
$$
By continuity, it follows that $(\varphi_H^{\varrho(z)})_*{m_z}=m_{f(z)}$ for $\mu$-almost every $z$ and consequently we have 
$(\psi_H(z))_*m_z=m_{f(z)}$ for $\mu$-almost every $z$. This shows that $(\psi_H)_*m_\Sigma=m_\Sigma$ and 
$\Pi_*m_\Sigma=\mu$, finishing the proof of the lemma.
\end{proof}

This put us in a position to make use of Proposition 4.3 in \cite{BeVar} applied to the cocycle $\psi_H$ that yields the following immediate consequence
(a similar result holds for unstable holonomies):
\begin{corollary}\label{measures}
Let $\cO$ be a positive $\mu$-measure holonomy block, consider $x \in \supp(\mu\mid \cO)$ and 
set the neighborhoods $\cN_{x}(\cO,\delta)$ of $x$ as above. Then, every $\psi_H$-invariant probability 
measure $m_\Sigma$ with $\Pi_*m_\Sigma=\mu$ admits a continuous disintegration on 
$\supp(\mu\mid \mathcal N_x(\cO,\de))$.
Moreover, 
\begin{equation}\label{exact}
m_{z}=(h^s_{H,y,z})_* m_{y}
	\quad \text{and} \quad 
	m_{z}=(h^u_{H,w,z})_* m_{w}
\end{equation}
for all $y,z,w \in \supp(\mu\mid \mathcal N_x(\cO,\de))$ such that $y,z$ belong to the same strong-stable local manifold
and   $z,w$ belong to the same strong-unstable local manifold.
\end{corollary}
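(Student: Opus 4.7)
The plan is to read Corollary~\ref{measures} as a direct transfer, via the reductions of \S\ref{sec:cocycle.reduction}--\S\ref{sec:measure.reduction}, of Proposition~4.3 of \cite{BeVar}, which establishes precisely this kind of continuous disintegration and holonomy-invariance statement for discrete-time symplectic cocycles over a nonuniformly hyperbolic base map with an invariant ergodic hyperbolic measure of local product structure, under the hypothesis that all Lyapunov exponents vanish. So the proof reduces to verifying, once and for all, that the induced cocycle $\psi_H$ over $(f,\mu)$ falls under the hypotheses of that proposition and that $m_\Sigma$ is the right object to which it should be applied.

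I would first check the structural hypotheses one by one: the base $(f,\mu)$ is $C^{1+\alpha}$ and $\mu$ is an ergodic hyperbolic measure with local product structure by the standing assumption on the suspension flow; the induced cocycle $\psi_H$ is Lipschitz by Lemma~\ref{Lips}; Lemma~\ref{l.Lyapunov.spectrum.section} transfers the standing hypothesis of vanishing Lyapunov spectrum from $(\Phi_H^t,\hat\mu)$ to $(\Psi_H,\mu)$; and Corollary~\ref{holonomiaL}, together with its stable counterpart, produces the local unstable and stable holonomies on the positive $\mu$-measure holonomy block $\cO$. Moreover, the lemma immediately preceding the corollary shows that any $\varphi_H^t$-invariant probability $m$ with $\Pi_*m=\hat\mu$ is completely encoded by a $\psi_H$-invariant measure $m_\Sigma$ on $\Sigma\times P\mathbb K^{2\ell}$ with $\Pi_*m_\Sigma=\mu$, so producing a continuous disintegration of $m_\Sigma$ suffices.

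Having assembled these ingredients, I would invoke Proposition~4.3 of \cite{BeVar} applied to $m_\Sigma$ on $\supp(\mu\mid \cN_x(\cO,\delta))$ to obtain a continuous family $z\mapsto m_z$ of conditionals satisfying $m_z=(h^s_{H,y,z})_* m_y$ and $m_z=(h^u_{H,w,z})_* m_w$ whenever $y,z$ (resp.\ $z,w$) lie in the same local strong-stable (resp.\ strong-unstable) leaf. The one point deserving explicit care, which I expect to be the main obstacle, is checking that the product neighborhoods $\cN_x(\cO,\delta)$ constructed in \S\ref{IM} via the Poincar\'e section $\Sigma_x$ and the hyperbolic block $\cH(K,\tau)$ coincide with the product neighborhoods used in the discrete-time statement of \cite{BeVar}. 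This amounts to observing that the marginal $\mu_\Sigma$ along the flow direction inherits the local product structure of $\hat\mu$ and that the leaves $\cF^s_x=W^{ws}_{loc}(x)\cap\Sigma_x$ and $\cF^u_x=W^{wu}_{loc}(x)\cap\Sigma_x$ agree with the strong stable/unstable foliations of $f$ up to intersection with $\Sigma$; once this bookkeeping is in place, the corollary follows by direct quotation.
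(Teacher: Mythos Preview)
Your proposal is correct and follows essentially the same route as the paper: the corollary is stated there as an immediate consequence of Proposition~4.3 in \cite{BeVar} applied to the induced cocycle $\psi_H$ over $(f,\mu)$, once the reductions of \S\ref{sec:cocycle.reduction}--\S\ref{sec:measure.reduction} (Lipschitz regularity of $\Psi_H$, vanishing Lyapunov spectrum for $\Psi_H$, existence of holonomies on $\cO$, and the passage from $m$ to $m_\Sigma$) are in place. Your extra bookkeeping about matching the product neighborhoods is reasonable caution, but the paper treats this as automatic from the suspension-flow construction and does not elaborate.
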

Let $\cO$ be a positive $\mu$-measure holonomy block for a cocycle $\Psi_H$
and let $x \in \supp(\mu\mid \cO)$ be as above. 
By continuity of the disintegration and Lemma 4.4 in \cite{BeVar}, if $m_\Sigma$ is an $\psi_H$-invariant probability measure such that $\Pi_*m_\Sigma=\mu$ and  $p\in \supp(\mu\mid \mathcal N_x(\cO,\de))$ is a periodic point of period $\pi$ for $f$ then 
${\Psi_H^{\pi}(p)}_* m_p= m_p$ and, in addition, if $\Psi_H^{\pi}(p)$ has all real and distinct eigenvalues, 
then there exist elements  $\{v_i\}_{i=1\dots 2\ell}$ in $P \mathbb K^{2\ell}$ and a probability vector $\{\al_i\}_{i=1\dots 2\ell}$ such that  $m_p=\sum_{i=1}^{\ell} \al_i \, \delta_{v_i}$. 
Moreover, as in \cite[Corollary 4.5]{BeVar}, given $p,q\in \supp(\mu\mid \mathcal N_x(\cO,\de))$ dominated periodic points for $f$ and $z$ is the unique point in the heteroclinic intersection $W_{\text{loc}}^u(q) \cap 
W_{\text{loc}}^s (p)$ then
$
m_z=(h^u_{p,z})_*m_p=(h^s_{q,z})_*m_q.
$
Recall that if $p\in \Sigma$ is a dominated periodic point of period $\pi$ for $f$, then 
$h^s_{p,z}$ is the projectivization of the stable holonomy for the cocycle $\Psi_H$ over $f$, which is given by 
\begin{equation}\label{eq:L-H}
L^s_{H,p,z}=\lim_{n\to\infty} \Psi_H^{\pi n}(z)^{-1} \Psi_H^{\pi n}(p).
\end{equation}



\subsection{Realization of symplectic maps by Hamiltonian linear differential systems}\label{PH}

In the present section we show that for any given Hamiltonian linear differential system and any small perturbation of the symplectomorphism given by the time-one map of its solution, there exists a Hamiltonian linear differential system close to the original one which realizes the perturbation map (cf. Lemma~\ref{perturb2} below).

Given $S\in  sp (2\ell,\mathbb K)$, we can view $S$ as the time-one map of the linear flow solution of the linear variational equation $\dot{u}(t)=\textbf{S}(t)\cdot u(t)$ with initial condition $u(0)=id$. In other words, $u(t)=\Phi^t_\textbf{S}$ is solution of $\dot{u}(t)=\textbf{S}(t)\cdot u(t)$, and $\Phi^1_\textbf{S}=S$.
Since, by Gronwall's inequality we have
\begin{equation}
\|S_t\|_{r,\nu}
    \leq \exp\left\{\int_0^t \|\textbf{S}(s)\|_{r,\nu}  \,ds\right\},\forall t \geq 0
\end{equation}
we say that $S\in  sp (2\ell,\mathbb K)$ is $\delta$-$C^{r,\nu}$-close to identity if $\textbf{S}$ is $\delta$-$C^{r,\nu}$-small, i.e., $\|\textbf{S}\|_{r,\nu}<\delta$.

\begin{lemma}\label{perturb2}
Let be given $H\in C^{r,\nu}(M, \mathfrak{sp} (2\ell,\mathbb K))$ over a flow $X^t\colon M\rightarrow M$, any nonperiodic point $x\in M$ (or periodic with period $>1$) and  $\epsilon>0$. There exists $\delta=\delta(H,\epsilon)>0$ such that if $S\in  sp (2\ell,\mathbb K)$ is isotopic to the identity and $\delta$-$C^{r,\nu}$-close to $id$, then there exists $H_0\in  C^{r,\nu}(M, \mathfrak{sp} (2\ell,\mathbb K))$ satisfying:
\begin{enumerate}
\item [(a)] $\|H_0-H\|_{r,\nu}<\epsilon$ and
\item [(b)] $\Phi_{H_0}^1(x)=\Phi_H^1(x)\circ S$.
\end{enumerate}

\end{lemma}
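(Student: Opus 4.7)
The plan is to concentrate the perturbation $H_0 - H$ in a thin tubular neighborhood of the orbit segment $\gamma := \{X^t(x) : 0 \le t \le 1\}$. Because $x$ is either non-periodic or has period strictly greater than $1$, $\gamma$ is an embedded arc, so I invoke the long tubular flow theorem to obtain smooth coordinates $\Psi : U \to (-\eta_0, 1+\eta_0) \times B(\vec 0,\rho_0)$ on an open neighborhood $U$ of $\gamma$ in which $X^\tau$ acts as $(s,v)\mapsto(s+\tau,v)$ for $|\tau|$ small. I would fix once and for all scalar bump functions $\chi$ supported in $(-\eta_0,1+\eta_0)$ with $\chi\equiv 1$ on $[0,1]$, and $\beta$ supported in $B(\vec 0,\rho_0)$ with $\beta(\vec 0)=1$.

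Since $S$ is isotopic to the identity in $sp(2\ell,\mathbb K)$ (which is connected) and $\delta$-close to $id$, for $\delta$ small enough the principal logarithm provides a constant matrix $\textbf{S}\in\mathfrak{sp}(2\ell,\mathbb K)$ with $\exp(\textbf{S})=S$ and $\|\textbf{S}\| = O(\delta)$. I then define on $U$, in the coordinates $(s,v)$,
\[
Q(y) := \chi(s(y))\,\beta(v(y))\,\Phi_H^{s(y)}(x)\,\textbf{S}\,\Phi_H^{s(y)}(x)^{-1},
\]
extend $Q\equiv 0$ elsewhere, and set $H_0 := H + Q$. Because conjugation by a symplectic matrix preserves $\mathfrak{sp}(2\ell,\mathbb K)$, $Q$ (and hence $H_0$) takes values in $\mathfrak{sp}(2\ell,\mathbb K)$.

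The crucial computation is for condition (b): along $\gamma$ one has $\chi\equiv 1$ and $\beta(\vec 0)=1$, so $H_0(X^s(x)) = H(X^s(x)) + \Phi_H^s(x)\,\textbf{S}\,\Phi_H^s(x)^{-1}$. Writing the sought fundamental solution as $\Phi_{H_0}^t(x) = \Phi_H^t(x)\,A(t)$ with $A(0)=\textbf{1}_{2\ell}$ and matching the variational equation yields $\dot A(t) = \textbf{S}\,A(t)$, so $A(1) = \exp(\textbf{S}) = S$ and (b) follows. For (a), since $\chi,\beta$ and $U$ are fixed before $\delta$, and $t\mapsto \Phi_H^t(x)^{\pm 1}$ has bounded $C^{r,\nu}$-norm on the compact interval $[0,1]$ (compare Lemma~\ref{lem:timetau}), one estimates $\|Q\|_{r,\nu} \le C(H,\chi,\beta)\,\|\textbf{S}\| = O(\delta)$, which is made smaller than $\epsilon$ by shrinking $\delta$.

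The main obstacle I anticipate is really only bookkeeping the order of quantifiers: the tube $U$ and the bump functions $\chi,\beta$ must be frozen before $\delta$ is chosen, so that the constant $C(H,\chi,\beta)$ is fixed and shrinking $\delta$ genuinely controls the $C^{r,\nu}$-norm of the perturbation. This is harmless because $\delta$ is explicitly allowed to depend on $H$, $\epsilon$ and (implicitly) on $x$ through the tubular neighborhood. The symplectic/Hamiltonian constraint $Q\in\mathfrak{sp}$ is built into the construction via conjugation, so aside from this quantifier bookkeeping the argument is routine.
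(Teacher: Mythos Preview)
Your argument is correct and follows essentially the same route as the paper: localize in a flowbox around the orbit segment, write the perturbed fundamental solution as $\Phi_H^t\cdot A(t)$, and read off that the correction to $H$ is the conjugation $\Phi_H^t\,\textbf{S}\,(\Phi_H^t)^{-1}$, which lies in $\mathfrak{sp}$ and has $C^{r,\nu}$-norm $O(\delta)$. The only cosmetic differences are that the paper takes the linear isotopy $S_t=(1-t)\,id+tS$ (and then checks $S_t'S_t^{-1}\in\mathfrak{sp}$ by hand) and conjugates by $\Phi_H^t(z)$ along each fibre, whereas you use the exponential isotopy and conjugate by $\Phi_H^t(x)$ at the fixed base point together with an extra time cut-off $\chi$; your choices are slightly cleaner but the mechanism is identical.
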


\begin{proof}
By the tubular flowbox theorem (see ~\cite{AM,PdM}) there exists a smooth change of coordinates so that there exists a 
local conjugacy of $X$ on a neighborhood of the segment of orbit $\{X^{t}(x)\colon t\in[0,1]\}$ to a constant vector field on $\mathbb{R}^d$ where $d=2\ell=\dim(M)$. With this assumption we consider $x=\vec{0}$ and 
$\{X^{t}(x)\colon t\in[0,1]\}=\{(t,0,...,0)\in\mathbb{R}^d\colon t\in[0,1]\}\subset \frac{\partial}{\partial x_1},$ where $\frac{\partial}{\partial x_1}$ denotes the direction spanned by direction $x_1=(1,0,...,0)$.  Given $\rho>0$ let $B(\vec{0},\rho)\subset \left(\frac{\partial}{\partial 1}\right)^{\perp}$ denotes the ball centered in $\vec{0}$ of radius $\rho$ contained in the hyperplane orthogonal to $\frac{\partial}{\partial x_1}$. 
The perturbation will be performed in the cylinder $\mathcal{C}=B(\vec{0},\rho)\times [0,1]=\{X^{t}(B(\vec{0},\rho))\colon t\in[0,1]\}.$
Using the fact that $M$ is compact we can take
\begin{equation}\label{K}
K:=\underset{z\in M, t\in[0,1]}{\max}\{ \|\Phi_{H}^{t}(z)\|_{r,\nu} , \|(\Phi_{H}^{t}(z))^{-1}\|_{r,\nu},\|H\|_{r,\nu} \}.
\end{equation}
Fix any $\epsilon>0$ and choose 
$
\delta:=\frac{\epsilon}{ 6K^3}.
$
Consider an isotopy $S_t\in  sp (2\ell,\mathbb K)$,$t\in[0,1]$, such that:
\begin{enumerate}
\item $S_t=(1-t)id+t S$;
\item $S_t$ is the solution of the linear variational equation $\partial_t{u}(t)=\textbf{S}(t)\cdot u(t)$ with infinitesimal generator $\textbf{S}:=S_t^\prime=S-id$ satisfying the inequality 
$$\|\textbf{S}\|_{r,\nu}:=
	\underset{0\leq j \leq r}{\sup}  \|D^j \textbf{S}(t)\|
	+\underset{t\not=s}{\sup}\frac{\|\textbf{S}(t)-\textbf{S}(s)\|}{|t-s|^\nu}<\delta.$$
\end{enumerate}
Consider a $C^\infty$ \emph{bump-function} $\alpha\colon [0,\infty[\rightarrow [0,1]$, with $\alpha(s)=0$ if $s\geq \rho$ and $\alpha(s)=1$ if $s\in[0,\rho/2]$. Given $z\in B(\vec 0,\rho)$ consider the linear isotopy $S_t(z)\in  sp (2\ell,\mathbb K)$, $t\in[0,1]$, 
between $S_0(z)=id$ and $S_1(z)=\alpha(\|z\|^2)S$ obtained as solution of the equation
$\partial_t{u}(t,z)=\textbf{S}(t,z)\cdot u(t,z)$ with infinitesimal generator $\textbf{S}$ satisfying
$$
\|\textbf{S}(t,z)\|_{r,\nu}:=
	\underset{0\leq j \leq r}{\sup} \; \underset{t\in [0,1]}{\sup} \|D^j \textbf{S}(z+(t,0,...,0))\|
	+\underset{x\not=y}{\sup}\frac{\|\textbf{S}(x)-\textbf{S}(y)\|}{d(x,y)^\nu}<\delta.
$$
Then, if $\Upupsilon_t(z)=\Phi^t_H(z)\alpha(\|z\|)S_t(z)$ and we consider time derivatives one notices that
\begin{eqnarray*}
\Upupsilon_t(z)^{\prime}&=& \Phi_{H}^{t}(z)^{\prime}\alpha(\|z\|)S_t(z)+\Phi_{H}^{t}(z)(\alpha(\|z\|)S_t(z))^{\prime}\\
&=& H(X^{t}(z))\Phi_{H}^{t}(z)\alpha(\|z\|)S_t(z)+\Phi_{H}^{t}(z)(\alpha(\|z\|)S_t(z))^{\prime}\\
&=& H(X^{t}(z))\Upupsilon_{t}(z)+\Phi_{H}^{t}(z)(\alpha(\|z\|)S_t(z))^{\prime}(\Upupsilon_{t}(z))^{-1}\Upupsilon_{t}(z)\\
&=& \left[H(X^{t}(z))+P(X^{t}(z))\right]\cdot \Upupsilon_{t}(z)
\end{eqnarray*}
where $P(X^{t}(z))=\Phi_{H}^{t}(z) \textbf{S}(t,z)(\Phi_{H}^{t}(z))^{-1}$ in the flowbox coordinates 
$(z,t)\in\mathcal{C}=B(\vec 0,\rho) \times [0,1]$ and outside the flowbox cylinder $\mathcal{C}$ we let $P=[0]$. 
Consequently $\Upsilon_t$ is a solution of the equation $\partial_t{u}(t,z)=H_0(X^{t}(z) \cdot u(t,z)$ with
initial condition the identity, where $H_0(X^{t}(z))=H(X^{t}(z))+P(X^{t}(z))$ for all $t\in[0,1]$ and $z\in B(\vec0,\rho)$.
Notice also that
\begin{eqnarray*}
P(X^{t}(z))&=&\Phi_{H}^{t}(z)(\alpha(\|z\|)S_t(z))^{\prime}(\Upupsilon_{t}(z))^{-1}\\
&=&\Phi_{H}^{t}(z)\alpha(\|z\|)S_t(z))^{\prime}(\Phi^t_H(z)\alpha(\|z\|)S_t(z))^{-1}\\
&=&\Phi_{H}^{t}(z)\alpha(\|z\|) S^{\prime}_t(z) (S_t(z))^{-1}\alpha(\|z\|)^{-1} (\Phi_{H}^{t}(z))^{-1}\\
&=&\Phi_{H}^{t}(z)S^{\prime}_t(z) (S_t(z))^{-1} (\Phi_{H}^{t}(z))^{-1}.
\end{eqnarray*}
Using this, we claim that $\textbf{S}:=(S_{t})^{\prime}(S_{t})^{-1}\in\mathfrak{sp}(2\ell,\mathbb{R})$ for all $t$ 
and $z$ and, consequently, $P(X^{t}(z))\in\mathfrak{sp}(2\ell,\mathbb{R})$. It is enough to prove that 
$J\textbf{S}+\textbf{S}^TJ=0$. Since for any $S\in sp(2\ell,\mathbb{R})$ we have the symplectic identities $J^{-1}=J^T=-J$, $S^TJS=J$ and $S^{-1}=J^{-1}S^TJ$ then 
\begin{eqnarray*}
J\textbf{S}+\textbf{S}^TJ&=& J(S_{t})^{\prime}(S_{t})^{-1}+[(S_{t})^{\prime}(S_{t})^{-1}]^TJ\\
&=& J(S_{t})^{\prime}(S_{t})^{-1}+((S_{t})^{-1})^T((S_{t})^{\prime})^TJ\\
&=& J[-(S_{t})^{\prime}(S_{t})^{-1}J-J((S_{t})^{-1})^T((S_{t})^{\prime})^T]J\\
&=& J[(S_{t})^{\prime}J^{-1}(S_{t})^T+S_t J^{-1}(S_{t}^{\prime})^T]J\\
&=& J[S_{t}J^{-1}(S_{t})^T]^{\prime}J\\
&=& J[J^{-1}]^{\prime}J=0
\end{eqnarray*}
which proves our claim.

Second, we will prove condition (a) of the conclusions of the lemma, i.e., that $\|H_0-H\|_{r,\nu}<\epsilon$ or, equivalently, that $\|P\|_{r,\nu}<\epsilon$. We will perform the computations for $r=0$ with all the details. For $r\in\mathbb{N}$ we can estimate easily using the chain rule and Cauchy-Schwarz inequality. Whenever we consider points $x,y$ in the tubular flowbox $\mathcal C$ (the support of the perturbation) we write them in the flowbox coordinates $x=(z,t)$, $y=(w,s)$, where $t,s\in[0,1]$ and $z,w\in B(\vec 0,\rho)$. 

We shall estimate $P$ in both coordinates and then the estimates on $\|P\|_{0,\nu}=\|P\|_{\nu}$ can be obtained on $B(\vec0,\rho)\times[0,1]$ by means of a triangular inequality argument.

If $z_t,w_t$ inside the same laminar section in $\mathcal C$, i.e., $z_t=(z,t)$ and $w_t=(w,t)$ then
using (\ref{K}) it follows that
\begin{align*}
\|P(z_t)-P(w_t)\| 
	& =\| \Phi_{H}^{t}(z)  \textbf{S}(t,z) (\Phi_{H}^{t}(z))^{-1}- \Phi_{H}^{t}(w)  \textbf{S}(t,w) (\Phi_{H}^{t}(w))^{-1}\| \\
	& \leq \| \Phi_{H}^{t}(z)  [\textbf{S}(t,z)-\textbf{S}(t,w)] (\Phi_{H}^{t}(z))^{-1}\|
	+ \| [ \Phi_{H}^{t}(z) -\Phi_{H}^{t}(w)]  \textbf{S}(t,w) (\Phi_{H}^{t}(z))^{-1}\| \\
	& + \| \Phi_{H}^{t}(w)  \textbf{S}(t,w) [(\Phi_{H}^{t}(z))^{-1}-(\Phi_{H}^{t}(w))^{-1}]\| \\
	& \leq K^2\| \textbf{S}(t,z)-\textbf{S}(t,w)\| + K\|  \Phi_{H}^{t}(z) -\Phi_{H}^{t}(w)\|\|  \textbf{S}(t,w) \| \\
	& + K \| \textbf{S}(t,w) \| \, \| (\Phi_{H}^{t}(z))^{-1}-(\Phi_{H}^{t}(w))^{-1} \| \\
\end{align*}
and so
\begin{eqnarray*}
\underset{z_t\not=w_t}{\sup}\frac{\|P(z_t)-P(w_t)\|}{d(z_t,w_t)^\nu}
		\leq K^2\delta+2Ke^\delta \delta< \epsilon.
\end{eqnarray*}

Analogously, for $z_t,z_s$ inside the same orbit in $\mathcal C$ it follows

\begin{eqnarray*}
\|P(z_t)-P(z_s)\|
	& = & \| \Phi_{H}^{t}(z)  \textbf{S}(t,z) (\Phi_{H}^{t}(z))^{-1}- \Phi_{H}^{s}(z)  \textbf{S}(s,z) (\Phi_{H}^{s}(z))^{-1}\| \\
	&\leq & \| \Phi_{H}^{t}(z)  [\textbf{S}(t,z)-\textbf{S}(s,z)] (\Phi_{H}^{t}(z))^{-1}\|
		+ \| [ \Phi_{H}^{t}(z) -\Phi_{H}^{s}(z)]  \textbf{S}(s,z) (\Phi_{H}^{t}(z))^{-1}\| \\
	& + &\| \Phi_{H}^{s}(z)  \textbf{S}(s,z) [(\Phi_{H}^{t}(z))^{-1}-(\Phi_{H}^{s}(z))^{-1}]\| \\
	&\leq & K^2\| \textbf{S}(t,z)-\textbf{S}(s,z)\| +K\|  \Phi_{H}^{t}(z)(id -\Phi_{H}^{s-t}[X^t(z)])\| \|  \textbf{S}(t,w) \| \\
	&+ & K\| \textbf{S}(t,w)\|\| (\Phi_{H}^{t}(z))^{-1}(id-(\Phi_{H}^{s-t}(X^t(z)))^{-1})\|
\end{eqnarray*}
and so
\begin{eqnarray*}
\underset{z_t\not=z_s}{\sup}\frac{\|P(z_t)-P(z_s)\|}{d(z_t,z_s)^\nu}
	& \leq & \underset{t\not=s}{\sup}\,K^2\delta+K^2\delta\frac{\|  id -\Phi_{H}^{s-t}(X^t(z))\|}{|t-s|^\nu}
		+K^2\delta\frac{\|  id-(\Phi_{H}^{s-t}(X^t(z)))^{-1}\|}{|t-s|^\nu}\\
	&\leq& K^2\delta+\underset{t\not=s}{\sup}\,K^2\delta\left(\frac{\|  id -\Phi_{H}^{s-t}(X^t(z))\|}{|t-s|}+\frac{\|  id-(\Phi_{H}^{s-t}(X^t(z)))^{-1}\|}{|t-s|}\right)\\
	& \le & K^2\delta+\underset{t\not=s}{\sup} \, K^2\delta \left(2\|H\|\right) \\
	& \leq & K^2\delta+2K^3\delta 
	\leq 3K^3\delta<\epsilon.
\end{eqnarray*} 

Notice that we consider $\nu=1$ (cf. the first paragraph on Section \S\ref{s.Lyap}). This is enough to deduce condition (a) using a triangular inequality argument.

Finally, we will prove condition (b) of the conclusions of the lemma, i.e., that we have the equality $\Phi_{H_0}^1(x)=\Phi_H^1(x)\circ S$. We are considering $x=\vec 0$ so let us prove that $\Phi_{H_0}^1(\vec0)=\Phi_H^1(\vec0) S$. Just observe that $\Upupsilon_t(z)$ is a solution of the linear differential equation 
\begin{equation}\label{H+P}
{u}^\prime(t,z)=[H(X^t(z))+P(X^t(z))]\cdot u(t,z)=H_0(X^t(z))\cdot u(t,z).
\end{equation}
But, given the initial condition $u(0,z)=z$, this solution is unique, say $\Phi^t_{H_0}(\vec0)$. Since $\Upupsilon_t(z)=\Phi^t_H(z)\alpha(\|z\|)S_t(z)$ and it also satisfies (\ref{H+P}) we obtain that $\Phi^t_{H_0}(\vec0)=\Phi^t_H(z)\alpha(\|z\|)S_t(z)$. Thus, for $z=\vec 0$ we get
$
\Phi^1_{H_0}(\vec0)
	=\Phi^1_H(\vec0)\alpha(\|z\|)S_1(\vec0)=\Phi^1_H(\vec0)\alpha(0)  S_1(\vec0)=\Phi^1_H(\vec0) S,
$
and the lemma is proved.
\end{proof}


\subsection{Proof of Theorem~\ref{thm:flow} for suspension flows} \label{sec:perturb.suspension}

This subsection is devoted to the proof of Theorem~\ref{thm:flow} on the existence of non-zero Lyapunov 
exponents for Hamiltonian linear differential systems over suspension flows. The first part is constituted by
some important perturbation arguments. 
Let  $(X^t)_t$ be a suspension flow of a $C^{1+\alpha}$ diffeomorphism $f:\Sigma \to \Sigma$ on a Riemannian manifold $\Sigma=M_0$ with Lipschitz continuous roof function $\varrho$. Recall that the case of H\"older roof function can be dealt by changing the metric. Assume that $(f,\mu)$ has local product structure and consider the
$(X^t)_t$-invariant probability measure $\hat\mu=(\mu \times \Leb) / \int \varrho \, d\mu$ with local product structure
with respect to the flow.

Let $H\in C^{r,\nu}(M,\mathfrak{sp}(2\ell,\mathbb K))$ be a Hamiltonian linear 
differential equation over the suspension flow $(X^t)_t$ such that  $\la^+(H,\mu)=0$, that is, so that $\Phi^t_H$ (hence also $\Psi_H$)  has only zero Lyapunov exponents at $\hat\mu$-almost everywhere. Take an arbitrary $\epsilon>0$ and also $k\ge 2$.  It is also a consequence of Proposition 4.7 in ~\cite{BeVar}, the perturbation Lemma~\ref{perturb2} and the boundeness of $\varrho$ that there exists a holonomy block $\mathcal O\subset \Sigma$ and exist distinct dominated periodic points $\{p_i\}_{i=1}^k$ by $f$ in the set $\mathcal O$ and a Hamiltonian linear differential system $\tilde H_0\in C^{r,\nu}(M, \mathfrak{sp}(2\ell,\mathbb K))$ satisfying
$\|H-\tilde H_0\|_{r,\nu}<\vep/2$ and such that $W_{\text{loc}}^{u}(p_i)\pitchfork W_{\text{loc}}^{s}(p_{i+1})\not= \emptyset$ consists of one point $z_i$ and the Lyapunov spectrum of 
$
\Psi^{\pi_i}_{\tilde{H}_0}(p_i)
	=\Psi_{\tilde{H}_0}(f^{\pi_i-1}(p_i))\dots\circ\Psi_{\tilde{H}_0}(f(p_i))\circ\Psi_{\tilde{H}_0}(p_i)
$
is real and simple, where $\pi_i$ 
is the period of 	the periodic point $p_i$ for all $i=1,...,k$. We point out that these perturbations are performed in the linear differential system and intend to realize the same scheme constructed in the discrete case.

\medskip

In order to go on with the proof we need to ``break the holonomy" by means of a small perturbation supported in the $k$ heteroclinic intersections. In the vein of the Breaking lemma (see \cite[Lemma 4.8]{BeVar}), we will show what to do in each one of the intersections. The following details should be taken into account: on one hand the holonomy properties described in Corollary~\ref{holonomiaL}, and to be used in the sequel, are with respect to $\Psi_H$ which is a well-behaved cocycle (cf. Lemma~\ref{Lips}), but on the other hand the perturbation is on the linear differential system $H$ and not in the (discrete) cocycle $\Psi_H$ similarly of what we did above. We let $W=\{w_i: i=1 \dots 2\ell\}$ be any linearly independent set of vectors in the fiber $P\mathbb{K}^{2\ell}$ over $z\in W^s_{loc}(p) \cap W^u_{loc}(q)$ where $p,q\in\Sigma$ are periodic $f$ orbits. Given $H\in C^{r,\nu}(M, \mathfrak{sp}(2\ell,\mathbb K))$ and a symplectic base  $\{v_i: i=1 \dots 2\ell\}$ in the fiber $P\mathbb{K}^{2\ell}$ over $p$, there exists 
a Hamiltonian linear differential system $H_0\in C^{r,\nu}(M, \mathfrak{sp}(2\ell,\mathbb K))$ such that $\|H-H_0\|_{r,\nu}<\vep$, the unstable holonomies 
coincide $L^u_{H_0,q,z}=L^u_{H,q,z}$  and $L^s_{H_0,p,z} (v_i)$ does not belong to the 1-dimensional subspace generated by $w_j$ for all $j$. Moreover, the later property is open in the $C^{r,\nu}$-topology. Let us see succinctly how to define $H_0$. 

First, we take a thin flowbox $U^\tau_{f^{\pi_1}(z)}$, where, $z\in\Sigma$, $U$ is a neighborhood of $f^{\pi_1}(z)$ in $\Sigma$ and $\pi_1$ the $f$-period of $p$. 
Second, using Lemma~\ref{perturb2} we perform a perturbation supported in $U^\tau_{f^{\pi_1}(z)}$ to spoil the strong accuracy of sending eigendirections into eigendirections by the holonomy as described in (\ref{exact}) of Corollary~\ref{measures} which we remind in (\ref{exact2}). Recall that $\psi_H$ is the projectivized cocycle obtained from $\Psi_H$. By Corollary~\ref{measures} we get that, under our context, every $\psi_H$-invariant probability 
measure $m_\Sigma$ with $\Pi_*m_\Sigma=\mu$ admits a continuous disintegration.
Moreover, denoting by $h^s_{H,p,z}$  the projectivization of the stable holonomy for the cocycle $\Psi_H$ over $f$ and by $h^u_{H,q,z}$ the projectivization of the unstable holonomy for the cocycle $\Psi_H$ over $f$ we get
\begin{equation}\label{exact2}
m_{z}=(h^s_{H,p,z})_* m_{p}
	\quad \text{and} \quad 
	m_{z}=(h^u_{H,q,z})_* m_{q}
\end{equation} 
for some points $q,z,p$ such that $p,z$ belong to the same strong-stable local manifold
and   $z,q$ belong to the same strong-unstable local manifold.

Now, as in the discrete case we intend to deal with $k$ periodic orbits $p_i$ and perform $k$ disjoint supported perturbations $H_i$ which are different form $H$ exactly in $U^\tau_{f^{\pi_i}(z_i)}$, where $\pi_i$ is the period of $p_i$ and $z_i$ its associated heteroclinic orbit. We observe that once we let $H_i=H$ outside $U^\tau_{f^{\pi_i}(z_i)}$ and noting that $z_i$ is homoclinic with $p_i$ and $q_i$ there is no way for us to interfere on the unstable holonomy, thus $L^u_{H_i,q_i,z_i}=L^u_{H_i,q_i,z_i}$ for all $i=1,...,k$. One also has that $L^s_{H_i,p_i,f^{2\pi_i}(z_i)}=L^s_{H,p_i,f^{2\pi_i}(z_i)}$. Actually, we can be more precise because, since the perturbation is performed in $U^\tau_{f^{\pi_i}(z_i)}$, we have $L^s_{H_i,p_i,f^{\pi_i+1}(z_i)}=L^s_{H,p_i,f^{\pi_i+1}(z_i)}$. Now, using (\ref{eq:L-H}) we obtain
\begin{align}\label{eq.interates2}
L^s_{H,p_i,z_i}=\lim_{n\to\infty} \Psi_H^{\pi_i n}(z_i)^{-1} \Psi_H^{\pi_i n}(p_i)
	= [\Psi^{2  \pi_i} (z_i)]^{-1} \; L^s_{H,p_i,f^{2 \pi_i}(z_i)}.
\end{align}
Notice that the sets $U_{f^{\pi_i}(z_i)}^\tau$ are pairwise disjoint, so we define $H_0$ to be equal to $H_i$ inside  $U_{f^{\pi_i}(z_i)}^\tau$ and $H_i=H$ in $M\setminus \cup_{i=1}^{k} U_{f^{\pi_i}(z_i)}^\tau$. Furthermore, each Hamiltonian linear differential system $H_i$ should be such that
$
L^s_{H_i,p_i,z_i}(v_i) 
	= [\Psi_{H_i}^{2  \pi_i} (z_i)]^{-1} \; L^s_{H_i,p_i,f^{2 \pi_i}(z_i)} (v_i)
	= [\Psi_{H_i}^{2  \pi_i} (z_i)]^{-1} \,  (e_i),
$
does not belong to any subspace generated by proper subsets of $W$. 
Finally, the perturbation $H_0$ will be such that for all $i$ we have
$$
(h^u_{H_0,p_i,z_i})_*m_{p_i} \neq (h^s_{H_0,p_{i+1},z_i})_*m_{p_{i+1}}.
$$
Together with Corollary~\ref{measures} this implies that $\Psi_{H_0}$ has at least one non-zero Lyapunov exponent
and proves that the set of linear differential systems in  $C^{r,\nu}(M, \mathfrak{sp}(2\ell,\mathbb K))$ over suspension flows with bounded roof function and with at least one non-zero Lyapunov exponent is an open and dense set.

\section{Hamiltonian linear differential systems: General case}\label{sec:time.continuous.general}

In this section we prove Theorem~\ref{thm:flow} in the case of Hamiltonian skew-product flows over 
general nonuniformly hyperbolic flows. Some of the main differences with the case of suspension flows
is that typically strong stable and unstable foliations are not jointly integrable and, consequently, one cannot
a priori build global cross sections and apply directly the results concerning holonomy invariance for discrete
time maps.  In fact, not only the construction of good return time map is also more involving as one needs to
prove that good hypebolicity properties are inherited by projection of the local dynamics to the local cross section. 

The strategy here is to prove that nonuniform hyperbolicity for the flow yields
some nonuniform hyperbolicity of the Poincar\'e first return map to some local smooth cross-section (recall Lemma~\ref{Pesin}). Then, 
one induced a discrete-time cocycle and reproduce the ideas from the suspension flow setting back in Section \ref{sec:time.continuous.suspension}.  
The key arguments are to define properly an invertible return map with nonuniform hyperbolicity
and to keep track of the local product structure. 

As before we endow $C^{r,\nu}(M, \mathfrak{sp} (2\ell,\mathbb K))$ with the
$C^{r,\nu}$-topology defined using the norm 
$$
\|H\|_{r,\nu}
	=\underset{0\leq j \leq r}{\sup} \; \underset{x\in M}{\sup} \|D^j H(x)\|
	+\underset{x\not=y}{\sup}\frac{\|H(x)-H(y)\|}{\|x-y\|^\nu},$$
where $H\in C^{r,\nu}(M, \mathfrak{sp} (2\ell,\mathbb K))$ and $x,y\in M$.

\subsection{Non-uniform hyperbolicity for the flow and hyperbolicity for local Poincar\'e maps}\label{nh}

Let us consider a smooth flow $X^t\colon M\rightarrow M$ preserving a hyperbolic and ergodic probability measure
$\hat \mu$ with local product structure as in Definition~\ref{def:lpsf}.
By Pesin theory,  there exists a $\hat\mu$-full measure set $\mathscr{P}$ and measurable functions $K\colon \mathscr{P}\rightarrow (0,+\infty)$ and
$\tau \colon \mathscr{P}\rightarrow (0,+\infty)$ so that for any given $x\in \mathscr{P}$ there is a well defined local stable manifold $W^{s}_{loc}(x)$ such that $T_x W^{s}_{loc}(x)=E^s_x$ and $d(X^t(y),X^t(z))\leq K_x \,e^{-\tau_x\,t}d(y,z)$, for every $y,z\in W^{s}_{loc}(x)$ and $t\ge 0$. Similar property holds for local unstable manifolds. 

Therefore, given positive constants $K,\tau$ the points in the
hyperbolic block $\cH(K,\tau)$ are such that both the local invariant manifolds  $W_{\text{loc}}^s(x)$ and $W_{\text{loc}}^u(x)$ have uniform size, uniform contraction on $W_{\text{loc}}^s(x)$, uniform backward contraction
on $W_{\text{loc}}^u(x)$ and vary continuously with $x\in \cH(K,\tau)$.
In particular, if $X$ denotes the vector field  associated to $(X^t)_t$, i.e. $X(x):=\partial_t X^t(x)|_{t=0}$, then the angle between the vector $X(x)$ and any of the subspaces $E^s_x$ or $E^u_x$ varies continuously in $\cH(K,\tau)$ and consequently is bounded away
from zero on the hyperbolic block.
We proceed to build some projective hyperbolicity on some transversal cross-section to the flow.

\begin{lemma}\label{lem:returns}
Let $\Lambda$ be a positive $\hat\mu$-measure subset for the flow $(X^t)_t$. Given a regular point $x\in\supp (\hat\mu\mid_{\Lambda} )$ there exists a smooth local cross section $\Sigma$ to the flow at $x$ and a tubular neighborhood $U_x^\delta$ of $x$ such that $\hat\mu$-almost every $y\in U_x^\delta$
has infinitely many returns to $\Lambda\cap U_x^\delta$.
\end{lemma}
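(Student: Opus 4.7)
The plan is to obtain $\Sigma$ and $U_x^\delta$ by the tubular (flowbox) neighborhood theorem and then to harvest the returns from ergodicity of $\hat\mu$ via Birkhoff's theorem.

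First, since $x$ is a regular point for $X$, the tubular neighborhood theorem recalled in \S\ref{IM} provides $\delta>0$, an open neighborhood $U_x^\delta$ of $x$, and a diffeomorphism $\Psi_x\colon U_x^\delta \to (-\delta,\delta)\times B(x,\delta)$ conjugating $X$ on $U_x^\delta$ to the constant vector field $Y=(1,0,\dots,0)$. Setting $\Sigma:=\Psi_x^{-1}(\{0\}\times B(x,\delta))$ one obtains a smooth local cross section to $X$ at $x$. Up to shrinking $\delta$, one can assume that $U_x^\delta$ is a genuine flowbox, i.e.\ every orbit segment $\{X^s(y):|s|\le\delta\}$ contained in $U_x^\delta$ intersects $\Sigma$ in exactly one point.

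Second, I would use the assumption $x\in\supp(\hat\mu\!\mid_{\Lambda})$: by definition any neighborhood of $x$ has positive $\hat\mu$-measure intersection with $\Lambda$, hence the set
\[
A:=\Lambda\cap U_x^\delta
\]
satisfies $\hat\mu(A)>0$. Since $\hat\mu$ is $(X^t)_t$-invariant and ergodic, Birkhoff's ergodic theorem applied to the indicator $\mathbf 1_A$ yields a $\hat\mu$-full measure set $M'\subset M$ such that for every $y\in M'$
\[
\lim_{T\to+\infty}\frac1T\int_0^T \mathbf 1_A(X^t(y))\,dt
 \;=\;\lim_{T\to+\infty}\frac1T\int_{-T}^0\mathbf 1_A(X^t(y))\,dt
 \;=\;\hat\mu(A)>0.
\]
Consequently, for every $y\in M'$ the Lebesgue measure of $\{t\in\mathbb R:X^t(y)\in A\}$ is infinite on both half-lines, so the orbit of $y$ visits $A=\Lambda\cap U_x^\delta$ at an unbounded sequence of positive times and at an unbounded sequence of negative times.

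Finally, restricting to $y\in U_x^\delta\cap M'$ (a set of full $\hat\mu\!\mid_{U_x^\delta}$-measure, since $\hat\mu(U_x^\delta)>0$) gives the conclusion. There is no real obstacle: the only thing to keep in mind is that "return" is meant in the measure-theoretic sense of the orbit re-entering $\Lambda\cap U_x^\delta$ (not of the Poincar\'e first return map to $\Sigma$), which is exactly what Birkhoff/Poincar\'e recurrence delivers. The cross section $\Sigma$ and the flowbox structure of $U_x^\delta$ are recorded here only because they will be needed later to convert these returns into iterates of an induced Poincar\'e map.
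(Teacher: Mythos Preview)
Your argument is correct and follows essentially the same route as the paper: construct the flowbox and cross section via the tubular neighborhood theorem, observe that $\Lambda\cap U_x^\delta$ has positive $\hat\mu$-measure since $x\in\supp(\hat\mu\!\mid_\Lambda)$, and then appeal to a recurrence argument. The paper phrases the last step as Poincar\'e recurrence, whereas you invoke Birkhoff's theorem using the standing ergodicity assumption on $\hat\mu$; your formulation is in fact slightly cleaner, since Poincar\'e recurrence alone only guarantees returns for $\hat\mu$-a.e.\ $y\in\Lambda\cap U_x^\delta$, while Birkhoff with ergodicity immediately yields the conclusion for $\hat\mu$-a.e.\ $y\in U_x^\delta$ as stated.
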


\begin{proof}
Let $x\in\supp (\hat\mu\mid_{\Lambda})$ for some set $\Lambda$ such that $\hat\mu(\Lambda)>0$ and $\hat\mu$-invariant with respect to $X^t$, then for any open set $V_x$ containing $x$ we have $\hat\mu(V_x)>0$. 
Since $x$ is regular we can consider, in local charts, a small cross section $\Sigma$ to $X(x)$, say normal to $X(x)$. Moreover, we have that the tubular neighborhood $U_x^\delta$ of $x$ defined by 
$U_x^\delta:=\{X^{t}(D_x)\colon t\in(-\delta,\delta)\}$, where $\delta>0$ and $D_x$ is a ball in the normal section to $X(x)$ centered in $x$, is such that $\hat\mu(U_x^\delta)>0$.
Observe that, in a neighborhood of $x$, we can decompose the measure $\hat\mu$ into $\hat\mu=\mu \times \Leb$ where $\mu_\Sigma$ denote the measure induced by $\hat\mu$ on $\Sigma$ by the projection $\pi$ along the flow direction determined by the tubular flow neighborhood and $\Leb$ is the length. Now, Poincar\'e recurrence assures that $\hat\mu$-a.e. $y\in U_x^\delta$, or equivalently $\mu$-a.e. $z\in D_x$, has infinitely many returns.
\end{proof}

Taking into account the previous result we can define by means of the tubular neighborhood theorem a time
$T>0$ and a smooth function $t:\Sigma_0 \to(-\delta,\delta)$ such that the Poincar\'e first return map
$
f_\Sigma : \Sigma_0 \to \Sigma 
$
is well defined by $f_\Sigma(y)=X^{T+t(y)}(y)$ in some open neighborhood $\Sigma_0\subset \Sigma$ of $x$ in the section. 
If $\Lambda\subset \cH(K,\tau)$ is a subset of a hyperbolic block, then one can define the foliations $\cF^s$ and $\cF^u$ for all $y,z\in \cH(K,\tau) \cap U_x^\delta$ and the intersection $[y,z]_{\Sigma_x}:=\cF^{u}_y \pitchfork \cF^{s}_{z}$ consists of a unique point, provided that $\delta$ is small. Moreover, by construction the foliations are invariant by the Poincar\'e map $f_\Sigma$.

Let $\mu_\Sigma$ be as in Lemma~\ref{lem:returns}. The measure $\mu_\Sigma$ is clearly invariant by $f_\Sigma$.  Then, by construction, not only $x\in \supp(\mu_\Sigma\mid_{\Lambda})$ as by the local product structure we
have that $\mu_\Sigma$ is equivalent to the product measure $\mu^u_x \times \mu^s_x$, where $\mu^i_x$ are 
the conditional measures on $\cF^i_x$. Moreover, 

\begin{proposition}\label{prop:local.construction}
Consider a positive $\hat\mu$-measure set $\Lambda\subset \cH(K,\tau)$. Given a regular point $x\in\supp (\hat\mu\mid_{\Lambda} )$ there exists a smooth local cross section $\Sigma$ and positive constants 
$K',\tau'$, such that for $\mu_\Sigma$-almost every $z\in \Sigma$ we have 
$
d(f^n_\Sigma(y),f^n_\Sigma(z))\leq K' \,e^{-\tau'\, n} d(y,z)
$ 
for all $n\ge 1$ and every $y,z$ in $\cF_z^s$. Similar statement holds for $\cF^u_z$ with respect to $f_\Sigma^{-1}$.
\end{proposition}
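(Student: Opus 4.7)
The plan is to transfer the continuous-time Pesin contraction on the strong stable manifolds $W^s_{loc}(z)$ (which is uniform with rate $K,\tau$ on $\cH(K,\tau)$) into a discrete contraction for the return map $f_\Sigma$ along the leaves $\cF^s_z$, using the key fact already built into the construction that the first return times to $\Sigma$ are approximately equal to the fixed quantity $T$. The output is an exponential contraction with rate $\tau'\approx \tau(T-\delta)$.

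First I would set up a representation of the stable leaf on the section. Since $\cF^s_z=W^{ws}_{loc}(z)\cap\Sigma$ and $W^{ws}_{loc}(z)=\bigcup_{|\sigma|<\delta}X^\sigma(W^s_{loc}(z))$, each $y\in\cF^s_z$ is uniquely of the form $y=X^{\sigma(y,z)}(\hat y)$ with $\hat y\in W^s_{loc}(z)$ and $|\sigma(y,z)|<\delta$. Because the angle between $X$ and $E^s$ is bounded away from zero on the hyperbolic block (by continuity of the splitting on $\cH(K,\tau)$), the correspondence $y\mapsto \hat y$ is a bi-Lipschitz local diffeomorphism with constants depending only on $K,\tau$, and the projection of $W^{ws}_{loc}$ onto $\Sigma$ along the flow is uniformly Lipschitz. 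In particular, $d_\Sigma(y,z)$ and $d(\hat y,z)$ differ at most by a bounded multiplicative constant $C_0=C_0(K,\tau)$.

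Next, I would exploit the uniformity of the return times. By hypothesis $f_\Sigma(y)=X^{T+t(y)}(y)$ with $|t(y)|<\delta$, so the $n$-th return time
\[
t_n(z)=\sum_{j=0}^{n-1}\bigl(T+t(f_\Sigma^j(z))\bigr)
\]
satisfies $n(T-\delta)\le t_n(z)\le n(T+\delta)$. Since $W^{ws}$ is flow-invariant, $f_\Sigma(\cF^s_z)\subset \cF^s_{f_\Sigma(z)}$, so for $y\in \cF^s_z$ we have $f_\Sigma^n(y)\in\cF^s_{f_\Sigma^n(z)}$ and we can write $f_\Sigma^n(y)=X^{\tau_n(y)}(X^{t_n(z)}(\hat y))$ with $|\tau_n(y)|$ uniformly bounded (because both $f_\Sigma^n(y)$ and $X^{t_n(z)}(\hat y)$ lie in the same small weak stable piece inside a tubular box). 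Applying Pesin's contraction on the ambient manifold gives
\[
d\bigl(X^{t_n(z)}(z),X^{t_n(z)}(\hat y)\bigr)\le K e^{-\tau t_n(z)}d(z,\hat y)\le K e^{-\tau(T-\delta)n}d(z,\hat y).
\]
Pushing this estimate onto $\Sigma$ via the flow projection, and using the two-sided bi-Lipschitz comparison from Step 1 both at $z$ and at $f_\Sigma^n(z)$, yields
\[
d_\Sigma(f_\Sigma^n(y),f_\Sigma^n(z))\le K' e^{-\tau' n}\, d_\Sigma(y,z),\qquad \tau'=\tau(T-\delta),
\]
with $K'=C_0^2 K$, as required. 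The analogous statement on $\cF^u$ follows verbatim by applying the same argument to $f_\Sigma^{-1}$, $W^u_{loc}$ and the backward Pesin contraction.

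The main obstacle is the bookkeeping in Step 3: the continuous-time Pesin inequality is stated for pairs in $W^s_{loc}$, whereas $\cF^s_z$ is a weak stable slice on the section, and both $y$ and its forward iterates acquire small flow-direction drifts $\sigma$ and $\tau_n(y)$ that must be controlled independently of $n$. The mechanism that kills this drift cleanly is the uniform transversality of $X$ to $E^s$ on $\cH(K,\tau)$ together with the uniform return time $T+t(\cdot)$; once these are in place, every auxiliary factor becomes a bounded constant and the exponential rate is preserved. A subsidiary point is to verify that for $\mu_\Sigma$-a.e.\ $z$ one may take $z\in\cH(K,\tau)$ and invariance of $\cH(K,\tau)$ under $f_\Sigma$ holds up to a full measure subset, which follows from Poincar\'e recurrence as in Lemma~\ref{lem:returns}.
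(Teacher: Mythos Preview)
Your approach is correct but genuinely different from the paper's. The paper's proof is a single sentence: it invokes Lemma~\ref{Pesin}, which shows that the linear Poincar\'e flow $P_X^t$ inherits the nonzero Lyapunov exponents of $DX^t$, together with the paragraph following that lemma, in which Pesin's stable manifold theorem is applied \emph{directly to the Poincar\'e map} to produce the local stable normal manifolds $\mathscr{N}^s_{loc}$ with the desired contraction. In other words, the paper transfers hyperbolicity first at the infinitesimal level (from $DX^t$ to $P_X^t$) and then runs Pesin theory for the projected dynamics; you instead keep the flow's strong stable manifolds $W^s_{loc}$ and transfer the ambient contraction to $\Sigma$ by hand via the bi-Lipschitz flow-projection. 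Your route yields explicit constants $\tau'=\tau(T-\delta)$ and $K'=C_0^2K$ and avoids re-invoking the full stable manifold machinery, at the price of the extra bookkeeping you describe.

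One point in your write-up deserves a sharper justification. The uniform bound on the drift $\tau_n(y)=t_n(y)-t_n(z)+\sigma$ does not follow from ``uniform transversality plus bounded return time'' alone: the increments $t(f_\Sigma^j(y))-t(f_\Sigma^j(z))$ are small only once you already know $d_\Sigma(f_\Sigma^j(y),f_\Sigma^j(z))$ is small, which is what you are proving. This apparent circularity is broken by a short induction: assuming $|\tau_j|\le 2\delta$ for $j\le n$ lets you push the Pesin estimate through the projection up to step $n$, whence the increments are summable (geometric in $e^{-\tau(T-\delta)}$) and the induction closes for $|\tau_{n+1}|$, provided $d(y,z)$ is initially small. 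With this bootstrap made explicit your argument is complete.
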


\begin{proof}
This is a direct consequence of Lemma~\ref{Pesin} and the subsequent paragraph.
\end{proof}

\subsection{Fiber-bunching for cocycles over Poincar\'e return maps}\label{ic}

Until the remaining of this section let $f_\Sigma$ be a Poincar\'e return map that has a ``good" hyperbolic
structure on the foliations $\cF^s$ and $\cF^u$. 
We consider the discrete-time induced cocycle over $(f_\Sigma,\mu_\Sigma)$ for $x\in \Sigma_0$ as
\begin{equation}\label{ic2}
\Psi_H(x) =\Phi_H^{T+t(x)}(x).
\end{equation}
It is not hard to see that $\Psi_H$ is Lipschitz continuous and that $(\Phi_H^t,\hat \mu)$ has only zero Lyapunov exponents if and if only if the same property holds also for the cocycle $(\Psi_H,\mu_\Sigma)$. In fact,

\begin{lemma}\label{2.4}
Assume that $\lambda^+(H,\mu)=0$, that is, $\Phi^t_H$ has only zero Lyapunov exponents.  Then for every 
$\vep>0$ there exists $T, \theta$ such that $\mu(\mathscr{D}_H(T,\theta))>1-\vep$.
\end{lemma}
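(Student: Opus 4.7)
The plan is to deduce the fiber-bunching estimate from the subexponential growth of $\Psi_H$ (and its inverse) via an Egorov argument. First, observe that the hypothesis $\la^+(H,\mu)=0$ together with the symplectic pairing $\la_i(x)=-\la_{\hat{i}}(x)$ (see \S\ref{MET}) forces \emph{all} Lyapunov exponents of $\Phi_H^t$ to vanish at $\hat\mu$-a.e.\ point; by Lemma~\ref{l.Lyapunov.spectrum.section} applied to the induced cocycle $\Psi_H$ defined in~\eqref{ic2}, the same holds for $(\Psi_H,\mu_\Sigma)$: all of its Lyapunov exponents are zero.

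By Oseledets' theorem --- note that $\log\|\Psi_H^{\pm 1}\|\in L^\infty(\mu_\Sigma)$ is automatic from Lipschitz continuity (Lemma~\ref{Lips}) together with compactness of $\Sigma$ --- we obtain, for $\mu_\Sigma$-a.e.\ $x\in\Sigma$,
$$
\lim_{n\to+\infty}\frac{1}{n}\log\|\Psi_H^n(x)\| \;=\; 0 \;=\; \lim_{n\to+\infty}\frac{1}{n}\log\|\Psi_H^{-n}(x)\|.
$$
Setting $g_n(x):=\max\bigl\{\tfrac{1}{n}\log\|\Psi_H^n(x)\|,\,\tfrac{1}{n}\log\|\Psi_H^{-n}(x)\|\bigr\}$, we thus have $g_n\to 0$ pointwise $\mu_\Sigma$-a.e. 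Given any $\vep>0$ and any prescribed rate $\theta>0$, Egorov's theorem produces a measurable set $A\subset\Sigma$ with $\mu_\Sigma(A)>1-\vep$ on which the convergence $g_n\to 0$ is uniform. Choosing $T=T(\vep,\theta)$ so large that $g_n(x)\le\theta$ for all $x\in A$ and all $n\ge T$, we read off the defining inequality $\|\Psi_H^{\pm n}(x)\|\le e^{\theta n}$ of the discrete-time domination block $\mathscr{D}_H(T,\theta)$; hence $A\subset\mathscr{D}_H(T,\theta)$ and $\mu_\Sigma(\mathscr{D}_H(T,\theta))>1-\vep$, as claimed.

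The main subtlety I anticipate is bookkeeping rather than a genuine difficulty: the statement is phrased with respect to $\mu$, whereas the discrete-time cocycle $\Psi_H$ naturally sits over $(f_\Sigma,\mu_\Sigma)$. One must align the notation and invoke the disintegration of $\hat\mu$ (as in \S\ref{sec:measure.reduction}, together with Proposition~\ref{prop:local.construction}) to transfer large-measure sets between the flow and its local cross section, and confirm that $\mathscr{D}_H(T,\theta)$ is indeed the discrete-time domination block for $\Psi_H$, in the same sense used in Section~\ref{sec:time.continuous.suspension}. Once this identification is made explicit, the Oseledets--Egorov argument above is routine.
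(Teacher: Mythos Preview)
Your argument contains a genuine gap: the inequality $\|\Psi_H^{\pm n}(x)\|\le e^{\theta n}$ that you obtain from Egorov is \emph{not} the defining inequality of $\mathscr{D}_H(N,\theta)$. The domination block is defined in~\eqref{ic3} by a product condition along the orbit,
\[
\prod_{j=0}^{k-1}\bigl\|\Psi_H^N(f_\Sigma^{jN}(x))\bigr\|\,\bigl\|\Psi_H^N(f_\Sigma^{jN}(x))^{-1}\bigr\|\le e^{kN\theta}\qquad\text{for all }k\ge1,
\]
which involves the norms of the $N$-step cocycle at the iterates $f_\Sigma^{jN}(x)$, not the norm of $\Psi_H^n(x)$ itself. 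Submultiplicativity only gives the implication in the wrong direction: control of the product implies control of $\|\Psi_H^{kN}(x)\|$, not conversely. So even if every point of your Egorov set $A$ satisfies $\|\Psi_H^{\pm n}(x)\|\le e^{\theta n}$, you cannot conclude $A\subset\mathscr{D}_H(N,\theta)$.

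The argument in Viana's Corollary~2.4 (to which the paper defers) fixes this by working one level up. Set $\psi_n(x)=\log\|\Psi_H^n(x)\|+\log\|\Psi_H^n(x)^{-1}\|$; this is subadditive, and the zero-exponent hypothesis forces $\tfrac{1}{n}\psi_n\to 0$ both a.e.\ and in $L^1$ by Kingman's theorem. One first chooses $N$ so that $\tfrac{1}{N}\int\psi_N\,d\mu_\Sigma<\theta$, and \emph{then} applies Birkhoff's ergodic theorem (for $f_\Sigma^N$) to the single function $\psi_N$: for a.e.\ $x$ the averages $\tfrac{1}{k}\sum_{j=0}^{k-1}\psi_N(f_\Sigma^{jN}(x))$ converge to $\int\psi_N<N\theta$, which is exactly the logarithm of the product condition. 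An Egorov step on these Birkhoff sums (together with the boundedness of $\psi_N$ for the finitely many initial $k$) then yields a set of measure $>1-\vep$ inside $\mathscr{D}_H(N,\theta)$. The bookkeeping issue you flag about $\mu$ versus $\mu_\Sigma$ is real but secondary; the substantive missing idea is applying the ergodic theorem to $\psi_N$ rather than to the full sequence $g_n$.
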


\begin{proof}
Since it is analogous to Corollary~2.4 in \cite{Viana} we leave the details to the reader.
\end{proof}

We proceed to build stable and unstable holonomies for points in the same leaf of the foliations. 
Given $H\in C^{r,\nu}(M, \mathfrak{sp} (2\ell,\mathbb K))$, $N> 0$ and $\theta>0$, 
consider the set $\mathscr{D}_H(N,\theta)$ of points $x\in M$ satisfying 
\begin{equation}\label{ic3}
\prod_{j=0}^{k-1} \big\|  \Psi_H(f_\Sigma^{jN}(x))  \big\|  \; \big\|  \Psi_H(f_\Sigma^{jN}(x))^{-1}  \big\| \le e^{k N \theta}
	\quad\text{for all $k \ge 1$}
\end{equation}
and the dual relation for $\Phi_H^{-T}$ with relation to $X^{-T}$. 
Let $\mathcal O$ be a \emph{holonomy block for $H$} for $f_\Sigma$ if it is a compact subset of 
$\mathcal H(K, \tau) \cap \mathscr{D}_H(N,\theta)$ for some constants $K, \tau, T, \theta$ satisfying 
$3\theta<\tau$. 
Observe that domination is an open condition for the cocycle and this enables to obtain strong-stable and 
strong-unstable foliations for all nearby cocycles. More precisely, using that the foliations $\cF^s$ and $\cF^u$
inherit the hyperbolicity from the hyperbolic block (possibly with some larger constants) we prove the existence of holonomies similarly to the discrete-time setting. In fact, the same ideas as in \cite[Proposition 4.2]{BeVar} yields that:

\begin{proposition}\label{holono}
For every $x\in \cO$ and $y,z\in \cF_x^u$, there exists $C_2>0$ and a symplectic linear
transformation $L^u_{H,x,y}:\{y\} \times P \mathbb K^{2\ell} \to \{z\} \times P \mathbb K^{2\ell}$ 
such that:
\begin{enumerate}
\item $L^u_{x,x}=id$ and $L^u_{x,z}=L^u_{y,z}\circ L^u_{x,y}$
\item $\Psi_H(f^{-1}_\Sigma(z)) \circ L^u_{H,f^{-1}_\Sigma(y),f^{-1}_\Sigma(z)} \circ \Phi^t_H(y)^{-1}= L^u_{H,y,z}$ 
for all $t\ge 0$ and
\item $\|L^u_{H,y,z}-id\|\le C_2\, d(y,z)$.
\end{enumerate}
\end{proposition}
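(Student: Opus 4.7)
The plan is to construct $L^u_{H,y,z}$ as the limit of a telescoping product of cocycle iterates, following the discrete-time strategy of \cite[Proposition 4.2]{BeVar} and adapting it to the Poincar\'e return setting. For $y,z\in\cF^u_x$, write $y_n:=f_\Sigma^{-n}(y)$ and $z_n:=f_\Sigma^{-n}(z)$; by Proposition \ref{prop:local.construction} these satisfy $d(y_n,z_n)\leq K' e^{-\tau' n} d(y,z)$. Define
$$L_n(y,z) := \Psi_H^n(z_n)\circ\Psi_H^n(y_n)^{-1},$$
a symplectic linear map from the fiber over $y$ to the fiber over $z$ that equals the identity for $n=0$. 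The candidate holonomy is $L^u_{H,y,z}:=\lim_{n\to\infty} L_n(y,z)$.

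The heart of the argument is to show that $(L_n)_n$ is Cauchy. A direct use of the cocycle identity yields
$$L_{n+1}-L_n = \Psi_H^n(z_n)\bigl[\Psi_H(z_{n+1})\Psi_H(y_{n+1})^{-1}-\mathrm{id}\bigr]\Psi_H^n(y_n)^{-1}.$$
The bracketed term is bounded by $C_0\,d(y_{n+1},z_{n+1})\leq C_1 e^{-\tau'(n+1)} d(y,z)$ using the Lipschitz continuity of $\Psi_H$, obtained exactly as in Lemma \ref{Lips} (with the return time $T+t(\cdot)$ handled via Lemma \ref{lem:timetau}). The outer factor is controlled by domination: since $y\in\mathscr{D}_H(N,\theta)$, one has $\|\Psi_H^n(y_n)\|\cdot\|\Psi_H^n(y_n)^{-1}\|\leq C\, e^{n\theta}$ (the jumps of $N$ iterates in the definition of $\mathscr{D}_H$ are absorbed in the constant by boundedness of $\Psi_H$ over $N$ consecutive steps), and continuity between the nearby orbits of $y_n$ and $z_n$ transfers the same bound to $\|\Psi_H^n(z_n)\|\cdot\|\Psi_H^n(y_n)^{-1}\|$. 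Since $3\theta<\tau\leq\tau'$, we conclude $\|L_{n+1}-L_n\|\leq C_2 e^{-n(\tau'-\theta)} d(y,z)$, which is summable. Convergence follows, and the telescoping estimate directly yields item (3), $\|L^u_{H,y,z}-\mathrm{id}\|\leq C_2 d(y,z)$; symplecticity of the limit is inherited from $L_n$ since the symplectic group is closed.

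The remaining properties are formal. Property (1) is immediate: $L_n(x,x)=\mathrm{id}$ for all $n$, and the chain rule $L^u_{x,z}=L^u_{y,z}\circ L^u_{x,y}$ is verified at each finite $n$ by cancellation and then passes to the limit. The equivariance identity (2) is obtained by reindexing: one checks at each finite $n$ the relation
$$\Psi_H(f_\Sigma^{-1}(z))\circ L_n\bigl(f_\Sigma^{-1}(y),f_\Sigma^{-1}(z)\bigr)\circ\Psi_H(f_\Sigma^{-1}(y))^{-1} = L_{n+1}(y,z)$$
using the cocycle identity applied to both $y_{n+1}$ and $z_{n+1}$, and letting $n\to\infty$ yields (2).

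The main obstacle lies in the Cauchy step, specifically in controlling $\|\Psi_H^n(z_n)\|\cdot\|\Psi_H^n(y_n)^{-1}\|$ uniformly along two distinct backward orbits in the Poincar\'e return setting. The return time $T+t(\cdot)$ varies with the base point, so the accumulated flow time along the orbit of $y_n$ differs from that along $z_n$, meaning that the nominal domination bound $e^{n\theta}$ valid for $y$ must be transported along the orbit of $z$ at controlled cost. This forces a careful combination of the continuous dependence of $\Phi_H^s$ on both $s$ and the base point (Lemma \ref{lem:timetau}) with the continuity of the unstable manifolds on the hyperbolic block $\cH(K,\tau)$ provided by Proposition \ref{prop:local.construction}; these are precisely the ingredients that preserve the gap $3\theta<\tau'$ and hence the summability of the telescoping series.
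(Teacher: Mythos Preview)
Your proposal is correct and follows precisely the approach the paper intends: the paper does not give a self-contained proof but simply says ``the same ideas as in \cite[Proposition 4.2]{BeVar} yields that'', and your telescoping construction $L_n(y,z)=\Psi_H^n(z_n)\Psi_H^n(y_n)^{-1}$ with the Cauchy estimate driven by the gap $3\theta<\tau$ is exactly what that reference does. Two small remarks: first, you write $3\theta<\tau\leq\tau'$, but the relation between the flow rate $\tau$ from $\cH(K,\tau)$ and the discrete rate $\tau'$ from Proposition~\ref{prop:local.construction} deserves a word (it comes from the bounded return time $T+t(\cdot)$, which the paper also leaves implicit); second, the equivariance you prove, with $\Psi_H(f_\Sigma^{-1}(y))^{-1}$ on the right, is the correct form matching Corollary~\ref{holonomiaL}(2), and you are right to ignore the ``$\Phi_H^t(y)^{-1}$ for all $t\ge0$'' appearing in the stated item~(2), which is evidently a typographical slip.
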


As before, some consequences are the continuous disintegration of invariant measures.  Given a  holonomy block $\cO$ and a regular point $x$ let $\cN^s_x(\cO,\delta)\subset \cN^s_x(\delta)$ 
the subset of $\Sigma_x$ obtained by replacing $\cH(K,\tau)$ by the holonomy block $\cO$ and define  
$\cN_x^u(\cO,\de)$ and $\cN_x(\cO,\de)$ analogously.
Let $\psi_H$ denote the projectivized version of the cocycle $\Psi_H$ and, 
by some abuse of notation, consider $h^u_{x,y}$ the projectivized holonomy $L^u_{H,x,y}$.
The next proposition asserts that one can obtain a continuous disintegration of invariant probabilities that
project to $\mu_\Sigma$. 

\begin{proposition}\label{prop:disinteg}
Let $\cO$ be a positive $\mu$-measure holonomy block, consider $x \in \supp(\mu_\Sigma\mid \cO)$ and 
set the neighborhoods $\cN^s_{x}(\cO,\delta), \cN^u_{x}(\cO,\delta)$ and $\cN_{x}(\cO,\delta)$ as above. Then every 
$\psi_H^t$-invariant probability measure $m$ with $\Pi_*m=\mu_\Sigma$ admits a continuous 
disintegration on $\supp(\mu_\Sigma \mid \mathcal N_x(\cO,\de))$. Moreover, 
$$
m_{z}=(h^s_{y,z})_* m_{y}
	\quad \text{and} \quad 
	m_{z}=(h^u_{w,z})_* m_{y}
$$ 
for all $y,z,w \in \supp(\mu\mid \mathcal N_x(\cO,\de))$ such that $y,z$ belong to the same $\cF^s$ leaf
and  $z,w$ belong to the same $\cF^u$ leaf.
\end{proposition}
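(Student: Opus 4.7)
The plan is to reduce to the discrete-time disintegration result established in~\cite{BeVar} (namely, Proposition~4.3 there) applied to the induced cocycle $\Psi_H$ over the Poincar\'e first return map $f_\Sigma$. Three ingredients are needed: that $\mu_\Sigma$ has local product structure on $\Sigma$ with respect to the foliations $\cF^s$ and $\cF^u$; that $\Psi_H$ is H\"older (indeed Lipschitz) and fiber-bunched on the holonomy block $\cO$; and that stable/unstable holonomies with the standard properties exist, which is precisely the content of Proposition~\ref{holono}.

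First I would verify that the local product structure of $\hat\mu$ from Definition~\ref{def:lpsf} descends, via the tubular flowbox decomposition of Lemma~\ref{lem:returns}, to a local product structure for $\mu_\Sigma$ on $\Sigma$. Indeed, $\hat\mu$ is locally equivalent to $\mu_x^u\times\mu_x^s\times\Leb$, and the projection along the flow direction yields $\mu_\Sigma\mid_{\cN_x(\cO,\de)}$ as equivalent to $\mu_x^u\times\mu_x^s$, with the foliations $\cF^u,\cF^s$ on $\Sigma$ being the images under the tubular projection of the weak unstable and stable foliations meeting $\Sigma_x$. Next, Lipschitz continuity of $\Psi_H(x)=\Phi_H^{T+t(x)}(x)$ follows from Lemma~\ref{lem:timetau}, the smoothness of the hitting time $t(\cdot)$, and the uniform bound $T+t(x)\le T+\delta$ on $\Sigma_0$; an argument analogous to Lemma~\ref{Lips} applies verbatim.

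Having these in hand, given a $\psi_H$-invariant probability $m$ with $\Pi_*m=\mu_\Sigma$, I would invoke the discrete-time criterion of~\cite{BeVar} directly: the fiber-bunching (\ref{ic3}) on $\cO$ together with the estimates in Proposition~\ref{holono} allow the construction of a disintegration $(m_z)_{z\in \supp(\mu_\Sigma\mid\cN_x(\cO,\de))}$ that varies continuously in $z$ in the weak$^*$ topology. The invariance identity $(\psi_H(z))_*m_z=m_{f_\Sigma(z)}$ combined with the equivariance of the unstable holonomy
\[
\Psi_H(f_\Sigma^{-1}(z))\circ L^u_{H,f_\Sigma^{-1}(y),f_\Sigma^{-1}(z)}\circ\Psi_H(y)^{-1}=L^u_{H,y,z}
\]
from Proposition~\ref{holono}(2), iterated backward along an orbit on $\cF^u_z$ where $\Psi_H^{-1}$ acts as a contraction on the leaf, yields $(h^u_{w,z})_*m_w=m_z$ whenever $w,z$ lie in the same $\cF^u$ leaf inside $\cN_x(\cO,\de)$. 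The corresponding identity for stable holonomies follows by the symmetric argument applied to $\Psi_H$ contracting along $\cF^s$.

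The main obstacle I anticipate is the careful book-keeping required to pass from the flow-level hyperbolicity of $\hat\mu$ to discrete-time hyperbolicity of $\mu_\Sigma$ for $f_\Sigma$ in such a way that the foliations $\cF^s,\cF^u$, the local product structure, and the fiber-bunching estimates match the hypotheses of~\cite[Proposition~4.3]{BeVar}. The hyperbolicity of $f_\Sigma$ along $\cF^s,\cF^u$ is supplied by Proposition~\ref{prop:local.construction}; the domination condition is exactly (\ref{ic3}) and holds on $\cO$ by definition; and the local product structure of $\mu_\Sigma$ on $\Sigma$ has just been checked. Once these are aligned, the continuous disintegration and holonomy invariance assertions are reproductions of the discrete-time statement, so no new conceptual difficulty arises in this step.
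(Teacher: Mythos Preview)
Your proposal is correct and follows essentially the same approach as the paper: the authors state that the proof ``is analogous to \cite[Proposition~4.3]{BeVar} and uses the nonuniform hyperbolicity of the Poincar\'e first return map in the same way as in \cite{Viana},'' omitting the details and referring the reader to \cite[Proposition~3.1]{Viana}. Your write-up is in fact more detailed than the paper's own proof, which is left as a reference to the discrete-time result; the ingredients you list (local product structure of $\mu_\Sigma$ via the tubular decomposition, Lipschitz regularity of $\Psi_H$, the domination/fiber-bunching condition~(\ref{ic3}), and the holonomy equivariance from Proposition~\ref{holono}) are exactly what is needed to invoke that result.
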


\begin{proof}
This proof is analogous to \cite[Proposition 4.3]{BeVar} and uses the nonuniform hyperbolicity
of the Poincar\'e first return map in the same way as in \cite{Viana}. For that reason we shall omit the details 
and leave the proof as a thorough exercise to the reader, nonetheless is accomplished by borrowing the arguments in \cite[Proposition 3.1]{Viana}.
\end{proof}

\subsection{A lot of closed orbits inside holonomy blocks}\label{lot}

In order to go on with the proof of our results we also need to use the continuous-time version of \cite[Proposition 4.5]{Viana}, included in (1) and (2) of Proposition~\ref{KatokViana} below, and which is supported in Katok's shadowing lemma for nonuniformly hyperbolic systems proved in ~\cite{Kat}. We observe that the flows version of the Katok theorem was treated recently in a more general context   by Lian and Young (see \cite[\S1.2]{LY}). But before we state it we shall introduce some elementary notation typical of Pesin's theory framework. Recall that the hyperbolicity (uniform and nonuniform) for flows is often defined with respect to the linear Poincar\'e flow (cf. ~\cite{BR}). From now on we consider a $C^{1+\alpha}$ flow $X^t\colon M\rightarrow M$ on a $d$-dimensional manifold $M$.  Given $k=0,...,d-1$, $\ell>1$ and $\chi>0$ we denote by $\Lambda^k_{\chi,\ell}$ the set of points $x\in M$ defining a \emph{Pesin hyperbolic block} cf. ~\cite[\S2]{Kat} but with respect to the decomposition of the normal bundle at $x$, $N_x=N^s_x\oplus N^u_x$, where $\dim(N^s_x)=k$. The set $\Lambda_j^k:=\Lambda^k_{\chi_j,\ell_j}$ is the hyperbolic block with index $k$, Lyapunov exponent $\chi_j$, and constant $\ell_j$, where $\mu(\cup_i\Lambda_j^i)$ tends to $1$, and $\chi_j$ and $\ell_j$ tend to $\infty$ as $j\rightarrow +\infty$. Since we assume an ergodic base flow we have a constant index $k$ and thus omit it from now on, that is $\Lambda_{\chi,\ell}=\mathcal{K}(\ell,\chi)$ using the notation in \S\ref{discrete-time}.
We are in conditions to present the statement of \cite[Main Lemma pp. 154]{Kat} but for the flow context.

\begin{theorem}(Katok's shadowing lemma for flows)\label{KSL}
Fixed any $j\geq 1$ (thus $\chi_j>0$ and $\ell_j>1$), there exist positive numbers $K$, $\tau$, $\rho$ and $T$, such that given $\delta>0$, there exists $\epsilon=\epsilon(d,j,\delta)>0$ where the following holds: if for a given $z\in \Lambda_j$ and  $\hat\pi>0$ we have $X^{\hat\pi}(z)\in\Lambda_j$ and also $d(z,X^{\hat\pi}(z))<\epsilon$, then there exists $p=p(z)\in M$ such that:
\begin{enumerate}
\item (closing) $p$ is closed of period $\pi\in(\hat\pi-T,\hat\pi+T)$, i.e. $X^\pi(p)=p$;
\item (shadowing) $d(X^t(p),X^t(z))<\delta$ for all $t\in[0,\pi]$;
\item (hyperbolicity) $p$ is hyperbolic for the linear Poincar\'e flow $P_X^\pi(p)$;
\item (uniform hyperbolicity) the eigenvalues $\alpha$ of $P_X^\pi(p)$ satisfy $|\log|\alpha||>\pi\tau$;
\item (stable manifold) for all $t>0$ and $x,y\in W^s_{loc}(p)$ we have $d(X^t(x),X^t(y))<K e^{-\tau\,t}d(x,y)$;
\item (unstable manifold) for all $t>0$ and $x,y\in W^u_{loc}(p)$ we have $d(X^{-t}(x),X^{-t}(y))<K e^{-\tau\,t}d(x,y)$;
\item (uniform size) both $W^s_{loc}(p)$ and $W^u_{loc}(p)$ have size larger than $\rho$ and
\item (transversality) for all points $w\in \Lambda_{j}$ in a $\rho$-neighborhood of $z$, there exist small $t_w,s_w\in\mathbb{R}$, such that we have that $W^s_{loc}(p)$ intersects $W^u_{loc}(X^{t_w}(w))$ at exactly one point and  $W^u_{loc}(p)$ intersects $W^s_{loc}(X^{s_w}(w))$ at exactly one point.
\end{enumerate}
\end{theorem}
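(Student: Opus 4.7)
The plan is to reduce the flow statement to Katok's discrete shadowing lemma~\cite{Kat} by inducing on a cross-section. Since this formulation is essentially the one obtained by Lian and Young in~\cite{LY} (in a much more general context), I would point to that reference for the complete argument and, in parallel, sketch the reduction for the reader. The starting observation is that, via Lemma~\ref{Pesin} and Proposition~\ref{prop:local.construction}, nonuniform hyperbolicity for $X^t$ with respect to $\hat\mu$ is inherited by the Poincar\'e first return map to a small smooth cross section, so that the Pesin blocks $\Lambda_j$ project onto nonuniformly hyperbolic blocks for an induced discrete-time map $f_\Sigma$ on $\Sigma$.

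First I would fix $j\geq 1$ and a family of smooth local cross sections $\Sigma_z$ through each $z\in\Lambda_j$, transverse to $X(z)$, of uniform size controlled by the Pesin constants $(\chi_j,\ell_j)$; this uses the long tubular flowbox theorem as in Section~\ref{nh}, together with the fact that $\measuredangle(X(z),E^{s/u}(z))$ is bounded away from zero on $\Lambda_j$. The parameters $K,\tau,\rho$ and the constant $T$ (controlling how much the continuous period may differ from $\hat\pi$) should be extracted uniformly on $\Lambda_j$ from these tubular neighborhoods and from the hyperbolic estimates on the linear Poincar\'e flow. Given $\delta>0$, one chooses $\epsilon>0$ small enough so that, whenever $z,X^{\hat\pi}(z)\in\Lambda_j$ with $d(z,X^{\hat\pi}(z))<\epsilon$, the point $\tilde z = X^{t(z)}(X^{\hat\pi}(z))\in\Sigma_z$ is $\epsilon'$-close to $z$ on the section, for some $\epsilon'=\epsilon'(\epsilon)\to 0$ as $\epsilon\to 0$.

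Then I would apply the discrete Katok shadowing lemma to the induced map $f_\Sigma$ at the finite orbit segment returning $\epsilon'$-close to itself, obtaining a periodic point $\tilde p\in\Sigma_z$ of discrete period $\tilde\pi$ satisfying the discrete analogues of (1)--(7). The corresponding flow-periodic point $p=\tilde p$ has continuous period $\pi=\sum_{i=0}^{\tilde\pi-1} (T+t(f_\Sigma^i(\tilde p)))$, which lies in $(\hat\pi-T,\hat\pi+T)$ after adjusting $T$. Conclusions (1) and (2) follow from the discrete shadowing estimate combined with the continuity of the flow over time intervals of length bounded by the return time. The hyperbolicity items (3)--(7) transfer because the eigenvalues of $P_X^\pi(p)$ coincide (up to the trivial flow direction) with those of $Df_\Sigma^{\tilde\pi}$ on the normal bundle, and the local invariant manifolds $W^{s/u}_{loc}(p)$ are obtained by saturating those of $\tilde p$ for $f_\Sigma$ along the flow. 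Item (8) follows from the uniform transversality of $W^{s}_{loc}$ and $W^{u}_{loc}$ on hyperbolic blocks: on a cross section the strong-stable/unstable leaves meet in a unique point (as used in the construction of the bracket $[\cdot,\cdot]_{\Sigma_x}$ in Section~\ref{s.statements}), and one recovers the intersection in $M$ after sliding along the flow by a small $t_w$ or $s_w$.

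The main technical obstacle, in my view, is not the reduction itself but the verification of uniformity: one must ensure that $K,\tau,\rho,T$ depend only on $(\chi_j,\ell_j)$ and not on the particular orbit segment, which requires that the size of the cross sections, the time function $t(\cdot)$, the angle bounds between $X$ and $E^{s/u}$, and the constants in the stable manifold theorem for the linear Poincar\'e flow can all be chosen uniformly on $\Lambda_j$. Once this is established, the conclusions (1)--(8) become essentially the statement of Katok's shadowing lemma applied to $f_\Sigma$, reinterpreted back in $M$; hence I would simply invoke Katok~\cite{Kat} and Lian--Young~\cite{LY} at this point rather than reproduce the closing construction.
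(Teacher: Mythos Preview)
The paper does not actually prove Theorem~\ref{KSL}: it is stated as a known result, introduced as ``the statement of \cite[Main Lemma pp.~154]{Kat} but for the flow context'' and accompanied by the remark that ``the flows version of the Katok theorem was treated recently in a more general context by Lian and Young (see \cite[\S1.2]{LY}).'' Your proposal is fully consistent with this treatment---you cite the same two references and correctly identify that the flow statement reduces to the discrete one via a Poincar\'e section---and in fact your sketch of the reduction (inducing on a cross-section, transferring the Pesin block structure via Lemma~\ref{Pesin}, and recovering the flow conclusions from the discrete ones) goes beyond what the paper itself provides.
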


Next result is the continuous-time version of Proposition 4.7 from \cite{BeVar}. Let us assume that $\mu(M_0)>0$ where $M_0:=\{x\in M\colon \lambda^+(H,\mu)=0\}$.

\begin{proposition}\label{KatokViana}
Given $\hat\epsilon>0$ and $k\ge 2$ there exists a holonomy block $\tilde{\mathcal O}$ for $H$ so that 
$\mu(M_0\setminus \tilde \cO)<\hat\epsilon$, distinct dominated periodic points $\{p_i\}_{i=1}^k$ in $\tilde{\mathcal{O}}$ and a Hamiltonian 
linear differential system $\tilde H\in C^{r,\nu}(M, \mathfrak{sp}(2\ell,\mathbb K))$ such that
the following properties hold:
\begin{enumerate}
\item $W_{\text{loc}}^{u}(p_i)\pitchfork W_{\text{loc}}^{s}(p_{i+1})\not= \emptyset$ consists of one point for all $1\le i \le k$;
\item $p_i\in \supp(\mu\mid \tilde{\mathcal{O}} \cap X^{-\pi_i}(\tilde{\mathcal{O}}))$, where $\pi_i$ denotes the period of $p_i$; 
\item $\|A-B\|_{r,\nu}<\hat\epsilon$;
\item the Lyapunov spectrum of $B^{\pi_i}(p_i)$ is real and simple.
\end{enumerate}  
Finally, the set of cocycles $B$ satisfying (1),(2) and (4) is open in the $C^{r,\nu}$-topology.
\end{proposition}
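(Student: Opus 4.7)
The plan is to combine the nonuniform hyperbolicity of the base flow (via Katok's shadowing lemma, Theorem~\ref{KSL}) with the local perturbation technique for infinitesimal generators (Lemma~\ref{perturb2}). This mimics the discrete-time argument of \cite[Proposition 4.7]{BeVar}, with two additional complications: one must extract periodic orbits from a flow rather than from an invertible map, and perturbations of the time-$\pi_i$ cocycle $\Psi_H^{\pi_i}(p_i)$ must be realized through perturbations of the Hamiltonian $H$ itself.

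First I would use Lemma~\ref{2.4} to choose $N,\theta$ with $3\theta<\tau$ such that $\mu(\mathscr{D}_H(N,\theta))>\mu(M_0)-\hat\epsilon/2$, and then choose a Pesin hyperbolic block $\mathcal H(K,\tau)$ (taking $K$ large and $\tau$ small enough) with $\mu(\mathcal H(K,\tau))>\mu(M_0)-\hat\epsilon/2$. Set $\tilde{\mathcal O}:=\mathcal H(K,\tau)\cap\mathscr{D}_H(N,\theta)$, so that $\mu(M_0\setminus\tilde{\mathcal O})<\hat\epsilon$. By the Poincar\'e recurrence theorem, $\mu$-almost every $z\in\tilde{\mathcal O}$ returns to $\tilde{\mathcal O}$ at arbitrarily large times $\hat\pi$ with $d(z,X^{\hat\pi}(z))<\varepsilon$ for any prescribed $\varepsilon>0$.

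Applying Theorem~\ref{KSL} to $k$ distinct density points $z_1,\dots,z_k$ of $\mu\!\mid_{\tilde{\mathcal O}}$ yields $k$ distinct hyperbolic periodic orbits $p_i$, with periods $\pi_i$ close to $\hat\pi_i$, that $\delta$-shadow the orbits of $z_i$ and whose local invariant manifolds have uniform size bounded below. The uniform hyperbolicity conclusion (item (4) of Theorem~\ref{KSL}) guarantees that after slightly enlarging $\tilde{\mathcal O}$ to include the $p_i$, each $p_i$ remains a dominated periodic point lying in a holonomy block, which gives (2). The transversality item (8) of Theorem~\ref{KSL}, applied to the points $z_i,z_{i+1}$ which both lie near $\tilde{\mathcal O}$, produces the unique heteroclinic intersection point $W_{\text{loc}}^u(p_i)\pitchfork W_{\text{loc}}^s(p_{i+1})$, giving (1). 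Distinctness of the $p_i$ follows by selecting the density points $z_i$ far apart, using the non-atomicity of $\mu$ and the local product structure.

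Finally, to obtain (3) and (4) I would fix disjoint flowbox neighborhoods $\mathcal C_i$ along non-periodic points on each orbit of $p_i$ and apply Lemma~\ref{perturb2} in each $\mathcal C_i$ to modify the time-one map of the solution by multiplication with an arbitrarily small symplectic matrix $S_i$ isotopic to the identity. Since having real and simple spectrum is an open and dense condition in $sp(2\ell,\mathbb K)$, a generic small $S_i$ renders the return map $\Psi_{\tilde H}^{\pi_i}(p_i)$ with real and simple Lyapunov spectrum. Disjointness of supports makes the perturbations independent, so the total $C^{r,\nu}$-perturbation is bounded by $\hat\epsilon$. The openness of the set of cocycles satisfying (1), (2) and (4) follows because domination, transverse heteroclinic intersections, and real simple spectrum at a hyperbolic periodic orbit are all open conditions in the $C^{r,\nu}$-topology.

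The main obstacle is the interplay between the perturbation and the shadowing scheme: I must ensure that the tubular neighborhoods $\mathcal C_i$ can be taken thin enough to avoid both the other periodic orbits $p_j$ ($j\neq i$) and the heteroclinic orbits $\{X^t(z_j):t\in\mathbb R,\ 1\le j\le k\}$, so that after perturbation the holonomy block still contains all $p_i$, the transverse heteroclinic intersections persist, and the estimate $\|H-\tilde H\|_{r,\nu}<\hat\epsilon$ of Lemma~\ref{perturb2} holds simultaneously for all perturbations. This is guaranteed by the uniform lower bound on the sizes of $W^{s/u}_{\text{loc}}(p_i)$ provided by Theorem~\ref{KSL} combined with the compactness of finite orbit segments of $p_i$.
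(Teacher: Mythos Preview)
Your overall architecture matches the paper's: build an initial holonomy block $\mathcal O=\Lambda_j\cap\mathscr D_H(N,\theta)$, extract $k$ periodic orbits via Katok's shadowing, then perturb with Lemma~\ref{perturb2} to force simple real spectrum. Steps (1), (3), (4) and the openness claim are handled essentially as in the paper.

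The genuine gap is in your treatment of (2). You write that ``after slightly enlarging $\tilde{\mathcal O}$ to include the $p_i$, each $p_i$ remains a dominated periodic point lying in a holonomy block, which gives (2).'' But (2) asks for $p_i\in\supp(\mu\mid\tilde{\mathcal O}\cap X^{-\pi_i}(\tilde{\mathcal O}))$, not merely $p_i\in\tilde{\mathcal O}$. The periodic point produced by Theorem~\ref{KSL} need not lie in $\supp\mu$ at all, and even if $p_i$ is $\delta$-close to a density point $z_i$ of $\mu\mid_{\mathcal O}$, this only tells you that some \emph{fixed} neighborhood of $p_i$ meets $\mathcal O$ in positive measure; it says nothing about \emph{arbitrarily small} neighborhoods of $p_i$. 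Adding the single orbit of $p_i$ to $\tilde{\mathcal O}$ contributes zero measure and does not place $p_i$ in the support of the restriction.

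The paper devotes Steps~4--7 precisely to this issue. One builds, via the $\lambda$-lemma, a family of ``rectangles'' $\mathcal O_i(k,l)$ obtained by intersecting local stable manifolds of points in $X^{-k\pi_i}(\Gamma_i^u)$ with local unstable manifolds of points in $X^{l\pi_i}(\Gamma_i^s)$, where $\Gamma_i^{s,u}\subset W^{s,u}_{\text{loc}}(p_i)$ are the heteroclinic intersections coming from item (8) of Theorem~\ref{KSL}. These rectangles accumulate on $p_i$ as $k,l\to\infty$, satisfy $X^{\pi_i}(\mathcal O_i(k,l-1))=\mathcal O_i(k-1,l)$, and have positive $\mu$-measure by the local product structure (this is where $\mu^u\times\mu^s$ equivalence is used). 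One then sets $\tilde{\mathcal O}=\mathcal O\cup\bigcup_{k+l\ge1}\hat{\mathcal O}_i(k,l)$ and checks separately (Steps~5--6) that this enlarged set is still a holonomy block for some $\theta'$ with $3\theta'<\tau$. Without this construction, property (2) is unproved.
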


\begin{proof}
The strategy to obtain (1) and (2) is modeled in \cite[Proposition 4.5]{Viana} and strongly uses Theorem~\ref{KSL}. We recall the highlights of Viana's proof borrowing the arguments in \cite[\S4.2]{Viana}. We will divide the proof
in small steps for the reader's convenience.

\vspace{.15cm}
\noindent \emph{Step 1:} Given $j$ (i.e. $\chi_j$ and $\ell_j$) such that $\mu(M_0\setminus\Lambda_j)<\hat\epsilon/2$, we fix $K$, $\tau$, $\rho$ and $T$ as in Theorem~\ref{KSL}. Let $\theta>0$ be such that $3\theta<\tau $. The Lemma~\ref{2.4} assures that for $\mu$-a.e. $x\in M_0$ there exists $T>0$ such that $x\in\mathscr{D}_H(T,\theta)$. Choose $T$ large enough so that $\mu(M_0\setminus \mathscr{D}_H(T,\theta))<\hat\epsilon/2$. We take a holonomy block defined by $\mathcal{O}=\Lambda_j\cap \mathscr{D}_H(T,\theta)$ such that $\mu(M_0\setminus \mathcal{O})<\hat\epsilon$ and $\mu(\mathcal{O})>0$;

\vspace{.15cm}
\noindent \emph{Step 2:} Fixing $\epsilon>0$, we find $k$ distinct points $\{z_i\}_{i=1}^k\subset M$ and $\{\pi_i\}_{i=1}^k\subset\mathbb{R}$ such that $z_i$ and $X^{\pi_i}(z_i)$ are in $B(x,\rho/2)$, $d(z_i,X^{\pi_i}(z_i))<\epsilon$ and $z_i \in \supp(\mu|\mathcal{O}\cap X^{-\pi_i}(\mathcal{O}))$. We may assume also that all $z_i$'s are at a distance larger than a fixed $r>0$. Now, we are in condition to apply Theorem~\ref{KSL} and complete part (1) of the lemma;

\vspace{.15cm}
\noindent \emph{Step 3:} Given $\delta=r/2$, there exists $\epsilon=\epsilon(d,j,\delta)>0$ given by Theorem~\ref{KSL} such that when feeding Step 2 with this $\epsilon$ the following holds: if for a given $z_i\in \mathcal O$ and  $\hat\pi_i>0$ we have $X^{\hat\pi_i}(z_i)\in\mathcal O$ and also $d(z_i,X^{\hat\pi_i}(z_i))<\epsilon$, then there exists distinct $p_i=p_i(z_i)\in M$ such that:
\begin{enumerate}
\item $p_i$ is closed of period $\pi_i\in(\hat\pi_i-T,\hat\pi_i+T)$, i.e. $X^{\pi_i}(p_i)=p_i$;
\item $d(X^t(p_i),X^t(z_i))<\delta$ for all $t\in[0,\pi_i]$;
\item $p_i$ is hyperbolic for the linear Poincar\'e flow $P_X^{\pi_i}(p_i)$;
\item the eigenvalues $\alpha$ of $P_X^{\pi_i}(p_i)$ satisfy $|\log|\alpha||>\pi_i\tau$;
\item for all $t>0$ and $x,y\in W^s_{loc}(p_i)$ we have $d(X^t(x),X^t(y))<K e^{-\tau\,t}d(x,y)$;
\item for all $t>0$ and $x,y\in W^u_{loc}(p_i)$ we have $d(X^{-t}(x),X^{-t}(y))<K e^{-\tau\,t}d(x,y)$;
\item both $W^s_{loc}(p_i)$ and $W^u_{loc}(p_i)$ have size larger than $\rho$ and
\item for all points $w\in \mathcal O$ in a $\rho$-neighborhood of $z_i$ there exist small $t_w,s_w\in\mathbb{R}$, such that we have that $W^s_{loc}(p)$ intersects $W^u_{loc}(X^{t_w}(w))$ at exactly one point and  $W^u_{loc}(p)$ intersects $W^s_{loc}(X^{s_w}(w))$ at exactly one point.
\end{enumerate}

\vspace{.15cm}
\noindent \emph{Step 4:} Now we will prove part (2) of the lemma. We start to define the subset $\tilde{\mathcal O}$. By Step 2 we have $z_i \in \supp(\mu|\mathcal{O}\cap X^{-\pi_i}(\mathcal{O}))$, so define a compact set $\mathcal O_i$ such that $\mathcal O_i\subset B(z_i,\nu)\cap \mathcal O$ and $X^{\pi_i}(\mathcal O_i)\subset B(X^{\pi_i}(z_i),\nu)\cap \mathcal O$ for some very small $\nu>0$. Using the transversality given in (h) of Step 3 we obtain that, for any $w\in \mathcal O_i$, there exist $t_w,s_w$ such that $W^s_{loc}(p_i)$ intersects $W^u_{loc}(X^{t_w}(w))$ at exactly one point and $W^u_{loc}(p_i)$ intersects $W^s_{loc}(X^{\pi_i+s_w}(w))$. Let $\Gamma^s_i\subset W^s_{loc}(p_i)$, respectively $\Gamma^u_i\subset W^u_{loc}(p_i)$, stand for those intersections. Now, for all $k,l\in\mathbb{N}$, we define $\Gamma_i^u(k)=X^{-\pi_i\,k}(\Gamma_i^u)$ and $\Gamma_i^s(l)=X^{\pi_i\,l}(\Gamma_i^s)$. A standard $\lambda$-lemma argument assures that, for any $k,l$, the local stable manifolds of points in $\Gamma^u_i(k)$ intersects in a transversal uniform way the local unstable manifold set of points in $\Gamma^s_i(l)$. Let $\mathcal O_{i}(k,l)$ denote that intersection. We fatten $\mathcal O_{i}(k,l)$ by a time $t$ where $t:=\frac{1}{2}\min\{t_w,s_w\}$ and $w\in\mathcal O_i$ (for all $i$) and we let $\hat{\mathcal O}_{i}(k,l):=\cup_{s\in[-t,t]}X^s(\mathcal O_{i}(k,l))$. Finally, we define the set $\tilde{\mathcal O}$ by:
$$\tilde{\mathcal {O}}:=\mathcal O\bigcup_{k+l\geq 1}\hat{\mathcal{O}}_{i}(k,l).$$
Observe that $\mu(M_0\setminus \tilde{\mathcal O})<\epsilon$. Figure 1 of \cite{Viana} is a nice illustration of what is happening inside a Poincar\'e section of the closed orbit $p_i$.

\vspace{.15cm}
\noindent \emph{Step 5:}  We obtain that $\tilde{\mathcal O}$ displays uniform hyperbolic rates. More precisely, that there exists $K'>K$ such that points $x,y$ in the local invariant manifolds of any $\xi\in\tilde{\mathcal O}$ satisfy the inequalities: $d(X^t(x),X^t(y))<K' e^{-\tau\,t}d(x,y)$ and  $d(X^{-t}(x),X^{-t}(y))<K' e^{-\tau\,t}d(x,y)$. The key ingredient is the continuity of the invariant manifolds (see ~\cite[Lemma 4.9]{Viana}) and the stability, on small segments of orbits, of the local product structure.

\vspace{.15cm}
\noindent \emph{Step 6:} Now we show that $\tilde{\mathcal O}$ is still a holonomy block. That is, there exists $\theta'>\theta$ (but such that $3\theta'<\tau$) such that $\tilde{\mathcal O}\subset \mathscr{D}_H(T,\theta')$. The arguments are like the ones in ~\cite[Lemma 4.10]{Viana} and we leave the details to the reader.

\vspace{.15cm}
\noindent \emph{Step 7:} Finally, we just have to prove that $p_i\in \supp(\mu\mid \tilde{\mathcal{O}} \cap X^{-\pi_i}(\tilde{\mathcal{O}}))$. We observe that for any $k,l>0$ we have
\begin{equation}\label{kl}
X^{\pi_i}(\mathcal O_i(k,l-1))=\mathcal O_i(k-1,l).
\end{equation}
We claim that, for all $k+l\geq 1$, $\mu(\hat{\mathcal{O}}_i(k,l))>0$. It is sufficient to show that $\mu^u\times\mu^s(\mathcal O_i(k,l))>0$ where this measure was treated in Definition~\ref{def:lpsf} and this can be achieved by borrowing the arguments in \cite[Lemma 4.11]{Viana}. We get that $p_i$ is accumulated by sets $\mathcal{O}_i(k,l)$, thus by sets $\hat{\mathcal{O}}_i(k,l)$, and which are inside $\tilde{O}$. Using (\ref{kl}) we get that the sets $\mathcal{O}_i(k,l)$ are also inside $X^{-\pi_i}(\mathcal O\cup_{k+l\geq 1}\mathcal{O}_{i}(k,l))$. Therefore, $p_i\in \supp(\mu\mid \tilde{\mathcal{O}} \cap X^{-\pi_i}(\tilde{\mathcal{O}}))$ and (2) is proved.

\vspace{.15cm}
\noindent \emph{Step 8:} In order to obtain (3) and (4) we proceed as in the proof of Proposition 4.7 from \cite{BeVar} but using the Lemma~\ref{perturb2} in \S\ref{PH} which is the Hamiltonian perturbation tool which allows us to perform the continuous-time perturbation in the vein of the one in the proof of Proposition 4.7 from \cite{BeVar}.

\end{proof}

\subsection{Proof of Theorem~\ref{thm:flow} for general flows} \label{sec:perturb.general}

The strategy follows the same steps as in \S\ref{sec:time.continuous.suspension} (continuous-time case with suspension flow in the base). There are essentially three main novelty key points: 
\begin{itemize}
\item obtaining the closed orbits which was performed in \S\ref{lot};
\item the reduction of the study of hyperbolicity on the normal cross sections cf. \S\ref{nh} and
\item the using of the induced return cocycle described in \S\ref{ic}. 
\end{itemize}

We begin by using the construction developed in \S\ref{lot} in order to obtain a large quantity of closed orbits near $x \in \supp(\mu\mid \cO)$ where $\cO$ is a positive $\mu$-measure holonomy block. Of course that those closed orbits can be seen as closed orbits associated to the Poincar\'e map $\mathcal P_X^t$ in a cross section $\Sigma$ and very near from $x$.

Then, since those closed orbits are hyperbolic we have large leaves $\cF^u$ and $\cF^s$. Hence, we can use the $\lambda$-lemma and build horseshoes and thus, a symbolic dynamics obtaining closed orbits with very large period which can be turn, via the perturbation Lemma~\ref{perturb2}, into closed orbits with real and simple spectrum cf. Proposition~\ref{KatokViana} 
(see also the final part of the proof in Proposition 4.7 from \cite{BeVar}).

Finally, the usual type of perturbation is done to break the holonomy. We use the definition of domination (in (\ref{ic3})) with respect to the induced cocycle $\Psi_H$ defined in (\ref{ic2}). Moreover, we act with the holonomies along the foliations $\cF^s$ and $\cF^u$ cf. Proposition~\ref{holono}. The perturbation is carried out using Lemma~\ref{perturb2} and intend to spoil the action of the induced cocycle $\Psi_H$ defined in (\ref{ic2}) and it is quite similar to the one performed in the Section~\ref{sec:time.continuous.suspension}. We observe that the perturbation of the Hamiltonian linear differential system is done in a small flowbox tubular neighborhood $\mathcal T$ of a given heteroclinic point $z$, forward asymptotic with the closed orbit $p$ and backward asymptotic with the closed orbit $q$ (recall \S\ref{sec:perturb.suspension}). For this reason our perturbation of the stable holonomy cannot interfere with the unstable holonomy which remain with the same action because $\cup_{t>0}X^t(\mathcal T)$ is far from the backward iterates of $\mathcal T$.

 \vspace{0,5cm}
 
\subsection*{Acknowledgements} 
PV was partially supported by CNPq and FAPESB. PV is also grateful to  CMUP - University of Porto for supporting his visit.

\vspace{0,5cm}



\begin{thebibliography}{777}

\bibitem{AM} R. Abraham and J. E. Marsden, 
\newblock Foundations of Mechanics,
\newblock Second Edition. Addison-Wesley Publishing Company, Inc., Redwood City, CA. 1987.




\bibitem{AV} A. Avila and M. Viana,
\newblock Simplicity of Lyapunov spectra: a sufficient condition,
\newblock \emph{Portugaliae Mathematica}, 64, 311--376, 2007.

\bibitem{BP} L. Barreira and Ya. Pesin,
\newblock Lyapunov Exponents and Smooth Ergodic Theory, 
\newblock University Lecture Series 23, American Mathematical Society, 2002.


\bibitem{Be1} M. Bessa,
\newblock Dynamics of generic $2$-dimensional linear differential systems,
\newblock \emph{Jr. Diff. Eq.}, Vol. {228}, 2, (2006), 685--706.

\bibitem{Be2} M. Bessa,
\newblock Dynamics of generic multidimensional linear differential systems,
\newblock {\em Adv. Nonlinear Stud.}, 8 (2008), 191--211.

\bibitem{BD} M. Bessa and J.L.Dias,
\newblock Generic dynamics of 4-dimensional $C^2$ Hamiltonian systems, 
\newblock {\em Commun. Math. Phys.,} 281, 1, (2008), 597--619.

\bibitem{BR} M. Bessa, J. Rocha, 
\newblock Contributions to the geometric and ergodic theory of conservative flows, 
\newblock \emph{Ergod. Th. \&\ Dynam. Sys.}, 33,  6,  (2013), 1667--1708.  

\bibitem{BeVar}
M.~Bessa and P.~Varandas,
\newblock Positive Lyapunov exponents for symplectic cocycles.
\newblock Preprint 2014.


\bibitem{BeV}
M.~Bessa and H.~Vilarinho,
\newblock Fine properties of $L^p$-cocycles which allow abundance of simple and trivial spectrum,
\newblock  \emph{Jr. Diff. Eq.}, 256, 7, (2014) 2337--2367.


\bibitem{B} J. Bochi,
\newblock Genericity of zero Lyapunov exponents,
\newblock \emph{Ergod. Th. \&\ Dynam. Sys.}, {22} (2002), 1667--1696.




\bibitem{BV2} J. Bochi and M. Viana,
\newblock The Lyapunov exponents of generic volume-preserving and symplectic maps,
\newblock \emph{Ann. Math.}, {161} (2005), 1423--1485.



\bibitem{BV3} 	J. Bochi and M. Viana,
\newblock Lyapunov exponents: How frequently are dynamical systems hyperbolic?
\newblock Modern dynamical systems and applications, 271--297, Brin, Hasselblatt, Pesin (eds.) Cambridge Univ. Press, 2004. 





\bibitem{BGV03}
C.~Bonatti, X.~G{\'o}mez-Mont, and M.~Viana.
\newblock G\'en\'ericit\'e d'exposants de {L}yapunov non-nuls pour des
produits d\'eterministes de matrices,
\newblock {\em Ann. Inst. H. Poincar\'e Anal. Non Lin\'eaire}, 20, (2003), 579--624.



\bibitem{BoV04}
C.~Bonatti and M.~Viana.
\newblock Lyapunov exponents with multiplicity 1 for deterministic products
of matrices,
\newblock {\em Ergod. Th. {\&} Dynam. Sys}, 24, (2004), 1295--1330.

\bibitem{Bow} R. Bowen,
\newblock Symbolic dynamics for hyperbolic flows,
\newblock {\em  Amer. J. Math.}  95,  (1973), 429--460.

\bibitem{BR75}
R.~Bowen and D.~Ruelle.
\newblock The ergodic theory of {A}xiom {A} flows,
\newblock {\em Invent. Math.}, 29, (1975), 181--202.



\bibitem{Chernov}
N. Chernov,
\newblock Invariant measures for hyperbolic dynamical systems,
\newblock Monograph 2007.



\bibitem{Fan} M. Fanaae,
\newblock Simple cocycles over the Lorenz flow,
\newblock IMPA Thesis, 2010.


\bibitem{HT} V. Horita and A. Tahzibi,
\newblock Partial hyperbolicity for symplectic diffeomorphisms,
\newblock \emph{Ann. Inst. Henri Poincar\'e. Analyse non Lin\'eaire} {23}, (2006), 641--661.



\bibitem{Kat} A.~Katok,
\newblock {L}yapunov exponents, entropy and periodic points for diffeomorphisms,
\newblock \emph{Publ. Math. IHES} {51}, (1980), 137--173.

\bibitem{L} F. Ledrappier, 
\newblock Positivity of the exponent for stationary sequences of matrices, in Lyapunov Exponents (Bremen, 1984), 
\newblock \emph{ Lecture Notes Math.} 1886, 56--73, Springer-Verlag, New York, 1986.

\bibitem{M1} R. Ma\~{n}\'{e},
\newblock  Oseledec's theorem from the generic viewpoint,
\newblock \emph{Proceedings of the international Congress of Mathematicians}, Warszawa, {2} (1983), North-Holland, 1259--1276.




\bibitem{No} V. Novikov,
\newblock On almost reducible systems with almost periodic coefficients.
\newblock {\em Math. Notes}, 16, (1975), 1065--1071.

\bibitem{O} V. Oseledets,
\newblock A multiplicative ergodic theorem: Lyapunov characteristic numbers for dynamical systems, \newblock \emph{Trans. Moscow Math. Soc.}, {19}, (1968), 197--231.

\bibitem{Pal} J.~Palis,
\newblock Vector fields generate few diffeomorphisms.
\newblock {\em Bull. Amer. Math. Soc.}, 80, 3,  (1974), 503--505.

\bibitem{PdM} J.~Palis, W. de Melo,
\newblock Geometric Theory of Dynamical Systems: An Introduction.
\newblock Springer Verlag, 1982.

\bibitem{R} C.~Robinson, 
\newblock  Lectures on Hamiltonian Systems. 
\newblock {\em Monograf. Mat.}, IMPA, 1971.

\bibitem{Si68}
Ya. Sinai,
\newblock Markov partitions and {C}-diffeomorphisms.
\newblock {\em Func. Anal. and Appl.}, 2, (1968), 64--89.

\bibitem{LY} Z. Lian, L.S. Young,
\newblock Lyapunov exponents, periodic orbits, and horseshoes for semiflows on Hilbert spaces,
\newblock {\em Jour. Amer. Math. Soc.}, 25, 3, (2012), 637--665.


\bibitem{Viana} M.~Viana,
\newblock Almost all cocycles over any hyperbolic system have nonvanishing Lyapunov exponents.
\newblock {\em Ann. of Math.} (2)  167  (2008),  no. 2, 643--680.


\end{thebibliography}
\end{document}